\newcommand*{\rom}[1]{\expandafter\@slowromancap\romannumeral #1@}
\renewcommand*\env@matrix[1][*\c@MaxMatrixCols c]{%
	\hskip -\arraycolsep
	\let\@ifnextchar\new@ifnextchar
	\array{#1}}
\theoremstyle{definition}
\newtheorem{theorem}{Theorem}[section]
\newtheorem{lemma}[theorem]{Lemma}
\newtheorem{definition}[theorem]{Definition}
\newtheorem{corollary}[theorem]{Corollary}
\newtheorem{examples}[theorem]{Examples}
\newcommand{\uproman}[1]{\uppercase\expandafter{\romannumeral#1}}
\DeclareMathOperator*{\esssup}{ess\,sup}
\theoremstyle{remark}
\newtheorem{remark}[theorem]{Remark}
\title{Uniform vector-valued pointwise ergodic theorems for operators}
\author{Micky Barthmann\footnote{Chemnitz University of Technology, Faculty of Mathematics,  Reichenhainer Straße 41, 09126 Chemnitz, Germany. Email: micky.barthmann@mathematik.tu-chemnitz.de}\ \ and Sohail Farhangi\footnote{University of Adam Mickiewicz, Department of Mathematics and Informatics, ulica Wieniawskiego 1, 61-712 Poznań, Poland. Email: sohail.farhangi@gmail.com}}
\begin{document}
\maketitle
\noindent\textbf{Abstract.} We prove a uniform vector-valued Wiener-Wintner Theorem for a class of operators that includes compositions of ergodic Koopman operators with contractive multiplication operators. Our results are new even in the case of complex-valued functions, as they also apply to some non-positive non-contractive operators, and they give new uniform pointwise theorems for ergodic, weakly mixing, and mildly mixing Koopman operators.
\\

\noindent\textbf{Matematics Subject Classification.} 	Primary 37A25, 47A35; Secondary 28B05, 46M07.\\

\noindent\textbf{Keywords.} Wiener-Wintner Theorem, Bochner Space, Ces\`aro Sequence Space.
\section{Introduction}
\subsection{Notation}
We write $(X,\mathscr{B},\mu)$ to denote a complete metric space $X$, a $\sigma$-finite Borel measure $\mu$, and we let $\mathscr{B}$ be the completion of the Borel $\sigma$-algebra with respect to $\mu$.
We use $\varphi:X\rightarrow X$ to denote a measurable transformation that preserves $\mu$, i.e., $\mu(A) = \mu(\varphi^{-1}A)$ for all $A \in \mathscr{B}$. We call the tuple $(X,\mathscr{B},\mu,\varphi)$ a measure preserving system, and we call it a probability measure preserving system if $\mu$ is a probability measure. A probability measure preserving system is ergodic if the only $A \in \mathscr{B}$ for which $\mu(A\triangle \varphi^{-1}A) = 0$ satisfy $\mu(A) \in \{0,1\}$. When our space is $X = [0,1]$, we use $m$ to denote the Lebesgue measure on $([0,1],\mathscr{B})$, where $\mathscr{B}$ is understood to be the Lebesgue $\sigma$-algebra. For a Banach space $E$ and for $1 \le p \le \infty$, we let $L^p(X,\mu;E)$ denote the Bochner space of strongly measurable functions $f:X\rightarrow E$ for which $\|f\| \in L^p(X,\mu)$, and we give more background on Bochner spaces in Section \ref{BochnerSpaceSubsection}. We use $L^p(X,\mu)$ for $L^p(X,\mu;\mathbb{C})$. We let $T_\varphi:L^p(X,\mu;E)\rightarrow L^p(X,\mu;E)$ denote the Koopman operator induced by $\varphi$, which is given by $T_\varphi f = f\circ\varphi$. We let $E^\prime$ denote the dual space of $E$. For $f \in E$ and $g^\prime \in E^\prime$, we use $\langle f,g^\prime\rangle$ and $g^\prime(f)$ interchangeably to denote the evaluation of $g^\prime$ at $f$. We use $e(x) := e^{2\pi ix}$, $\mathbb{S}^1$ denotes the unit circle in $\mathbb{C}$, and $D_1$ denotes the closed unit ball in $\mathbb{C}$. 
\subsection{History and statement of results}
\begin{theorem}[Birkhoff, \cite{Birkhoff'sErgodicTheorem}] Let $(X,\mathscr{B},\mu,\varphi)$ be a probability measure preserving system, and let $f \in L^1(X,\mu)$. For a.e. $x \in X$, we have
\label{BET}
\begin{equation}
\lim_{N\rightarrow\infty}\frac{1}{N}\sum_{n = 1}^Nf(\varphi^nx) = f^*(x),
\end{equation}
where $f^*(x) \in L^1(X,\mu)$ is such that $f^*(\varphi x) = f^*(x)$ for a.e. $x \in X$ and $\int_Af^*d\mu = \int_Afd\mu$ for every $A \in \mathscr{B}$ satisfying $A = \varphi^{-1}A$. In particular, if $\varphi$ is ergodic, then for a.e. $x \in X$ we have

\begin{equation}
\lim_{N\rightarrow\infty}\frac{1}{N}\sum_{n = 1}^Nf(\varphi^nx) = \int_Xfd\mu. 
\end{equation}
\end{theorem}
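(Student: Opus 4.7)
The plan is to follow the classical approach via the Maximal Ergodic Theorem. First, I would set
\[
\overline{f}(x) := \limsup_{N\rightarrow\infty}\frac{1}{N}\sum_{n=1}^N f(\varphi^n x), \qquad \underline{f}(x) := \liminf_{N\rightarrow\infty}\frac{1}{N}\sum_{n=1}^N f(\varphi^n x),
\]
and observe that $\overline{f}\circ\varphi = \overline{f}$ and $\underline{f}\circ\varphi = \underline{f}$ a.e., since replacing the sum over $n=1,\dots,N$ with the sum over $n=2,\dots,N+1$ changes the Ces\`aro average by $O(1/N)$. The aim is then to show that $\overline{f}=\underline{f}$ a.e.; I will call this common value $f^*$.

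The key input is the Maximal Ergodic Theorem: writing $S_0f = 0$, $S_Nf = \sum_{n=1}^N f\circ\varphi^n$, and $Mf = \sup_{N\ge 0} S_Nf$, one has
\[
\int_{\{Mf > 0\}} f\, d\mu \ge 0.
\]
I would prove this by the Garsia argument: the partial maxima $F_N := \max_{0\le k\le N}S_kf$ satisfy $F_N\circ\varphi + f \ge S_kf$ for each $1\le k\le N+1$, hence $F_N\circ\varphi + f \ge F_N$ on $\{F_N>0\}$; integrating over $\{F_N>0\}$, using $\mu$-invariance of $\varphi$ to replace $\int F_N\circ\varphi\, d\mu$ by $\int F_N\, d\mu$, and letting $N\to\infty$ by monotone convergence gives the stated inequality.

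Next, for rationals $\alpha<\beta$, consider $E_{\alpha,\beta} := \{x : \underline{f}(x) < \alpha < \beta < \overline{f}(x)\}$. This set is $\varphi$-invariant, so by restricting the system to $E_{\alpha,\beta}$ (which is permissible since $\mu$ is $\sigma$-finite) and applying the Maximal Ergodic Theorem to $f-\beta$ and to $\alpha-f$, one obtains simultaneously
\[
\int_{E_{\alpha,\beta}} f\, d\mu \ge \beta\,\mu(E_{\alpha,\beta}) \quad\text{and}\quad \int_{E_{\alpha,\beta}} f\, d\mu \le \alpha\,\mu(E_{\alpha,\beta}).
\]
Since $\alpha<\beta$ and $f\in L^1$, this forces $\mu(E_{\alpha,\beta})=0$, and a countable union over rational pairs gives $\overline{f}=\underline{f}$ a.e., so $f^*$ exists a.e. and is $\varphi$-invariant. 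Fatou's lemma applied to the Ces\`aro averages shows $f^* \in L^1$.

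To identify the limit, I would fix an invariant set $A\in\mathscr B$ and, for each $\varepsilon>0$, apply the Maximal Ergodic Theorem on $A$ to $f-f^*-\varepsilon$ and to $f^*-\varepsilon-f$ (both integrable). Since $f^*$ is invariant, the Ces\`aro averages of $f-f^*$ converge to $0$ a.e., so standard manipulations with the two maximal inequalities and $\varepsilon\to 0$ give $\int_A f^*\, d\mu = \int_A f\, d\mu$. The ergodic case then follows because every invariant set has measure $0$ or $1$, forcing $f^*$ to equal the constant $\int_X f\, d\mu$ a.e. The main obstacle I anticipate is proving the Maximal Ergodic Theorem itself, which is the combinatorial heart of the argument; a secondary subtlety is justifying the restriction to invariant subsets when $\mu(E_{\alpha,\beta})$ could a priori be infinite, which I would handle by intersecting with an exhausting sequence of finite-measure invariant sets.
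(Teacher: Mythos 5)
The paper states Birkhoff's theorem as cited classical background and contains no proof of it, so there is nothing internal to compare against; I can only assess your argument on its own terms. Your overall route --- the maximal ergodic theorem via Garsia's trick, the sets $E_{\alpha,\beta}$ to kill the gap between $\overline{f}$ and $\underline{f}$, Fatou for $f^*\in L^1$ --- is the standard one and those steps are sound. (One bookkeeping slip: with your convention $S_Nf=\sum_{n=1}^{N}f\circ\varphi^{n}$ one has $(S_kf)\circ\varphi=S_{k+1}f-f\circ\varphi$, so Garsia's inequality and the maximal inequality come out with $f\circ\varphi$ in place of $f$, i.e.\ $\int_{\{Mf>0\}}f\circ\varphi\,d\mu\ge 0$; this is harmless because you only ever integrate over $\varphi$-invariant sets, but as written your displayed inequality matches the convention $S_Nf=\sum_{n=0}^{N-1}f\circ\varphi^{n}$.)

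The genuine gap is in the identification step. Applying the maximal ergodic theorem on an invariant set $A$ to $g_\varepsilon:=f-f^*-\varepsilon$ yields only
\begin{equation*}
\int_{A\cap\{Mg_\varepsilon>0\}}\bigl(f-f^*-\varepsilon\bigr)\,d\mu\ \ge\ 0,
\end{equation*}
and the positivity set $\{Mg_\varepsilon>0\}$ need not be (almost) all of $A$: since $\tfrac1N S_Ng_\varepsilon\to-\varepsilon<0$ a.e., nothing forces the supremum over $N$ to be positive anywhere. On the complement you only learn that every partial Ces\`aro sum of $g_\varepsilon$ is $\le 0$, which bounds the relevant integral from above --- the wrong direction --- and the two inequalities you invoke live on two different, non-complementary subsets of $A$, so no manipulation combines them into $\int_Af^*\,d\mu=\int_Af\,d\mu$. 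Indeed, if $f$ happens to be invariant and non-constant, then $f^*=f$, both of your auxiliary functions are $\equiv-\varepsilon$, and both maximal inequalities are vacuous ($0\ge 0$): they carry no information about $\int_A f^*$ versus $\int_A f$. The textbook arguments avoid this precisely by subtracting a \emph{constant}: partition $A$ into the invariant level sets $A_k=A\cap\{k\varepsilon\le f^*<(k+1)\varepsilon\}$, observe that on $A_k$ one has $\sup_N\tfrac1NS_Nf\ge f^*\ge k\varepsilon$, so the positivity set of $M(f-k\varepsilon+\delta)$ is all of $A_k$, deduce $\int_{A_k}f\,d\mu\ge k\varepsilon\,\mu(A_k)\ge\int_{A_k}f^*\,d\mu-\varepsilon\,\mu(A_k)$, sum over $k$, let $\varepsilon\to0$, and repeat with $-f$ for the reverse inequality. (Alternatively, establish $L^1$-convergence of the averages --- first for bounded $f$, then by truncation using the maximal inequality --- and pass to the limit in $\int_A\tfrac1NS_Nf\,d\mu=\int_Af\,d\mu$.) With that repair the proof is complete.
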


Birkhoff's theorem has been generalized in many directions. One direction of generalization has been to consider linear operators that are more general than Koopman operators. This program began with the work of Doob \cite{DoobPointwiseTheorem} and Kakutani \cite{BirkhoffForMarkoffProcesses} to produce a pointwise ergodic theorem for Markoff processes. Later Hopf \cite{HopfErgodicTheorem} proved a general operator theoretic pointwise ergodic theorem, which was further refined by Dunford and Schwartz \cite{DSOperators}, and then extended to operators on Bochner spaces by Chacon.

\begin{theorem}[{\cite[Theorem 1]{ChaconErgodicTheorem}}]\label{ChaconErgodicTheorem}
    Let $E$ be a reflexive Banach space, let $1 \le p < +\infty$, let $(X,\mathscr{B},\mu)$ be a $\sigma$-finite measure space, and let $T:L^1(X,\mu;E)\rightarrow L^1(X,\mu;E)$ be a linear contraction for which we also have $\|Tg\|_\infty \le \|g\|_\infty$ whenever $g \in L^1(X,\mu;E)\cap L^\infty(X,\mu;E)$. For any $f \in L^p(X,\mu;E)$, the limit

    \begin{equation}\label{EquationFromChaconsTheorem}
        \lim_{N\rightarrow\infty}\frac{1}{N}\sum_{n = 1}^NT^nf(x)
    \end{equation}
    converges in the norm topology of $E$ for a.e. $x \in X$. Furthermore, if $1 < p < +\infty$, then there exists a $f^* \in L^p(X,\mu;E)$ such that for a.e. $x \in X$ we have

    \begin{equation}
        \sup_{N\in \mathbb{N}}\left|\left|\frac{1}{N}\sum_{n = 1}^NT^nf(x)\right|\right| \le \|f^*(x)\|.
    \end{equation}
\end{theorem}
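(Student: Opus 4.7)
The overall strategy is the Banach-principle scheme: prove a maximal inequality, verify a.e.\ convergence on a dense subset, and combine. For the setup, since $T$ is an $L^1(X,\mu;E)$-contraction and $\|Tg\|_\infty \le \|g\|_\infty$ on $L^1\cap L^\infty$, Riesz--Thorin interpolation for Bochner spaces extends $T$ to a contraction on $L^p(X,\mu;E)$ for every $1 \le p \le \infty$. Write $A_Nf(x) := \frac{1}{N}\sum_{n=1}^N T^nf(x)$ and $M^*f(x) := \sup_{N\ge 1}\|A_Nf(x)\|_E$.

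The first main step is a vector-valued maximal inequality: $\|M^*f\|_p \le C_p\|f\|_p$ for $1 < p < \infty$ and a weak-type $(1,1)$ bound at $p=1$; this already produces the dominating $f^*$ asserted in the theorem. The scalar analogue is the Hopf--Dunford--Schwartz maximal ergodic theorem, whose proof uses positivity in a crucial way. In the vector setting I would try to construct a positive scalar operator $S:L^1(X,\mu)+L^\infty(X,\mu)\to L^1+L^\infty$ satisfying the Dunford--Schwartz conditions and dominating pointwise norms, $\|T^nf(x)\|_E \le S^n(\|f\|_E)(x)$ a.e.; the scalar theory applied to $S$ then transfers to $M^*$. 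If a direct construction fails, a dilation of $T$ to a positive Dunford--Schwartz operator on an enlarged product space should serve as a substitute.

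The second main step is convergence on a dense subset. Because $E$ is reflexive and $1 < p < \infty$, $L^p(X,\mu;E)$ is reflexive, so the mean ergodic theorem gives the decomposition $L^p(X,\mu;E) = \ker(I-T) \oplus \overline{(I-T)L^p(X,\mu;E)}$. On $\ker(I-T)$ the averages are constant in $N$. For coboundaries, $A_N((I-T)h)(x) = \frac{1}{N}(Th(x) - T^{N+1}h(x))$, which tends to $0$ a.e.\ once $h$ is taken in the dense subset $L^p\cap L^\infty$, using that $T$ is an $L^\infty$-contraction to control $T^{N+1}h$ uniformly. The Banach principle then upgrades a.e.\ convergence on this dense subset to a.e.\ convergence on all of $L^p$ for $1 < p < \infty$. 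The case $p=1$ is handled separately by combining density of $L^1\cap L^\infty$ in $L^1$ (on which the theorem holds by the $p>1$ case) with the weak-type $(1,1)$ bound.

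The main obstacle is the vector-valued maximal inequality. The scalar Hopf argument hinges on the positivity that appears once one passes to $|f|$, and this positivity is lost when $T$ acts linearly on $E$-valued functions. Producing a positive scalar majorant $S$ (or, equivalently, dilating $T$ to a positive $L^1$-$L^\infty$ contraction on a suitable product space) while respecting the Dunford--Schwartz conditions is the technical heart of the proof; once this is in hand, the interpolation, the mean-ergodic decomposition, and the Banach principle assemble the conclusion in a routine fashion.
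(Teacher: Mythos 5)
This statement is quoted in the paper directly from Chacon's article and is not proved there, so there is no in-paper argument to compare against; I will assess your plan on its own terms. Your architecture is the standard one (it is essentially the proof in Krengel's book): interpolation, a maximal inequality obtained by dominating $T$ with a positive scalar operator, mean ergodicity of $T$ on the reflexive space $L^p(X,\mu;E)$ to get the dense subspace $\mathrm{fix}(T)+(I-T)(L^p\cap L^\infty)$, and the Banach principle, with reflexivity of $E$ entering exactly where you place it. The one point you leave genuinely open --- the existence of a positive $L^1$--$L^\infty$ contraction $S$ on the scalar space with $\|T^nf(x)\|_E\le S^n\bigl(\|f\|_E\bigr)(x)$ a.e. --- is not something you need to invent: it is the Chacon--Krengel linear modulus theorem, whose construction ($\tau g=\sup\{\sum_i\|Tf_i\|_E:\sum_i\|f_i\|_E\le g\}$, the supremum taken in the lattice $L^1(\mu)_+$ over finite families) carries over verbatim to $L^1(X,\mu;E)$, yields additivity on $L^1_+$ and hence a positive linear $S$ with $\|S\|_1\le\|T\|_1$, preserves the $L^\infty$-contraction property, and gives $\|T^nf\|\le S^n(\|f\|)$ by monotonicity. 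You should either cite this or carry out that construction; it is the heart of the matter, and your fallback of dilating $T$ would not serve as a substitute (Akcoglu-type dilation theorems concern positive contractions on $L^p$ for $p>1$ and do not produce the pointwise domination you need here).

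Two smaller corrections. First, the dominating function $f^*$ for $1<p<\infty$ comes from the strong-type $(p,p)$ maximal inequality $\|M^*f\|_{L^p(\mu)}\le C_p\|f\|_p$ (then take $f^*=M^*f\cdot e_0$ for a fixed unit vector $e_0\in E$), not from the weak-type $(1,1)$ bound, which only enters in the $p=1$ convergence argument. Second, when you write $\|T^{N+1}h\|_\infty\le\|h\|_\infty$ for $h\in L^p\cap L^\infty$, note that on a $\sigma$-finite space such an $h$ need not lie in $L^1$, where the hypothesis on $T$ is stated; you should approximate by $h\in L^1\cap L^\infty$ (which is still dense in $L^p$) or first record that $T$ extends to a contraction of $L^1+L^\infty$ preserving the $L^\infty$ bound. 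With these repairs the proposal assembles into a complete proof.
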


Yoshimoto \cite{MostGeneralVectorValuedErgodicTheorem} extended Chacon's result to more general operators\footnote{A special case of Yoshimoto's result is that the limit in Equation \eqref{EquationFromChaconsTheorem} still exists if $p = 1$, $\mu$ is a probability measure, and there exists some $C > 0$ for which $\sup_{n \ge 1}\|T^ng\|_\infty \le C\|g\|_\infty$ for all $g \in L^\infty(X,\mu;E)$.} and to a larger class of functions. Similar results were also found independently by Chilin and Litvinov \cite{DSValiditySpace}.\\

Another direction in which Birkhoff's theorem has been generalized is to consider measure preserving systems that have stronger mixing properties than ergodicity. The first instance of this is given by the Wiener-Wintner Theorem \cite{Wiener-WintnerTheorem}. We now state what follows from Bourgain's uniform Wiener-Wintner Theorem \cite{DoubleRecurrenceAndASConvergence}.

\begin{theorem}\label{UWWT} 
Let $(X,\mathscr{B},\mu,\varphi)$ be a probability measure preserving system and let $f \in L^1(X,\mu)$. There exists $X_f \in \mathscr{B}$ with $\mu(X_f) = 1$, such that for $x \in X_f$ and $\lambda \in \mathbb{S}^1$ we have existence of the following limit:

\begin{equation}
    \lim_{N\rightarrow\infty}\frac{1}{N}\sum_{n = 1}^Nf(\varphi^nx)\lambda^n.
\end{equation}
Furthermore, if $f \in L^1(X,\mu)$ is weakly mixing\footnote{This definition of weakly mixing function is motivated by the definitions that we use later on. Previous authors would instead say that $f$ is orthogonal to the Kronecker factor of $(X,\mathscr{B},\mu,\varphi)$, which is equivalent to our condition. It is also worth noticing that $f$ being weakly mixing is the same as $f$ being a flight vector for $T_\varphi$.}, i.e., 
\begin{equation}
    \lim_{N\rightarrow\infty}\frac{1}{N}\sum_{n = 1}^N\left|\int_XT_\varphi^nfgd\mu\right| = 0,
\end{equation}
for all $g \in L^\infty(X,\mu)$, then for $x \in X_f$ we have

\begin{equation}
\label{WWTEquation}
\lim_{N\rightarrow\infty}\sup_{\lambda \in \mathbb{S}^1}\frac{1}{N}\left|\sum_{n = 1}^Nf(\varphi^nx)\lambda^n\right| = 0. 
\end{equation}
\end{theorem}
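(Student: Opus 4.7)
My plan is to prove the second claim first (uniform convergence to $0$ for weakly mixing $f$) and then deduce the first claim via the splitting of $f$ into a Kronecker component and a weakly mixing component. The central ingredient is van der Corput's inequality applied to $a_n = f(\varphi^n x)\lambda^n$: since $a_{n+h}\overline{a_n} = \lambda^h(T_\varphi^h f\cdot\bar f)(\varphi^n x)$ and $|\lambda^h|=1$, the resulting termwise upper bound for $|\frac{1}{N}\sum_{n=1}^N a_n|^2$ becomes independent of $\lambda$ after taking absolute values. Applying Birkhoff's Theorem \ref{BET} to the countable family $\{T_\varphi^h f\cdot\bar f\}_{h\ge 1}$ (and to $|f|^2$) carves out a single full-measure set $X_f$ on which, for every $H \ge 1$,
\[
\limsup_{N\to\infty}\sup_{\lambda \in \mathbb{S}^1}\Bigl|\frac{1}{N}\sum_{n=1}^N f(\varphi^n x)\lambda^n\Bigr|^2 \le \frac{2}{H}\sum_{h=1}^H\Bigl|\int_X T_\varphi^h f\cdot \bar f\,d\mu\Bigr| + \frac{\|f\|_2^2}{H},
\]
initially for $f \in L^\infty(X,\mu)$. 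Letting $H \to \infty$ and using the weakly mixing hypothesis with $g = \bar f \in L^\infty$ forces the right-hand side to $0$, which gives \eqref{WWTEquation} for bounded $f$.

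To pass from $L^\infty$ to $L^1$ I would use that bounded weakly mixing functions are dense in the weakly mixing subspace of $L^1$; one verifies this by truncating $f$ to $f_M = f\cdot \mathbf{1}_{|f|\le M}$ and then subtracting the conditional expectation onto the Kronecker $\sigma$-algebra to restore weak mixing, producing a sequence in $L^\infty \cap \{\text{wm}\}$ converging to $f$ in $L^1$. Given $\epsilon > 0$, I would write $f = \tilde f + r$ with $\tilde f \in L^\infty$ weakly mixing and $\|r\|_1 < \epsilon$; the previous step handles $\tilde f$, while $r$ is controlled via the trivial pointwise bound $\sup_{N,\lambda}\frac{1}{N}\bigl|\sum_{n=1}^N r(\varphi^n x)\lambda^n\bigr| \le \sup_N\frac{1}{N}\sum_{n=1}^N|r|(\varphi^n x)$, whose right-hand side satisfies the weak $(1,1)$ ergodic maximal inequality. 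A standard Banach principle / Borel--Cantelli argument then yields a single full-measure set on which \eqref{WWTEquation} holds for the original $f$.

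For the first claim, I would decompose $f \in L^1$ as $f = f_{\mathrm{Kr}} + f_{\mathrm{wm}}$, where $f_{\mathrm{Kr}}$ lies in the $L^1$-closure of the span of the eigenfunctions of $T_\varphi$. For a single eigenfunction $g$ with $T_\varphi g = \mu g$, the weighted average collapses to $g(x)\cdot\frac{1}{N}\sum_{n=1}^N(\mu\lambda)^n$, which converges for every $\lambda \in \mathbb{S}^1$, uniformly in $x$; linearity extends this to finite linear combinations of eigenfunctions, and the same maximal inequality above extends convergence to all of $f_{\mathrm{Kr}}$. Combined with the uniform-in-$\lambda$ decay already established for $f_{\mathrm{wm}}$, this produces one full-measure set on which the limit exists for every $\lambda \in \mathbb{S}^1$.

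The heart of the argument, and the main obstacle, is producing a \emph{single}, $\lambda$-independent, full-measure set: a naive Fubini-plus-Birkhoff argument only yields a full-measure set depending on $\lambda$. The purpose of the van der Corput step is precisely to trade the uncountable family of weighted averages indexed by $\lambda \in \mathbb{S}^1$ for the countable family of self-correlations $\{T_\varphi^h f\cdot\bar f\}_{h\ge 1}$, on which Birkhoff's theorem produces the required exceptional set via a countable intersection.
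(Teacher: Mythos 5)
Your argument is correct in outline, but it is a genuinely different route from the one this paper takes. The paper does not prove Theorem \ref{UWWT} by hand at all: it quotes it as a consequence of Bourgain's uniform Wiener--Wintner theorem, and then reproves/generalizes it (Theorem \ref{UniformWienerWintnerInIntro}, via Theorems \ref{firstapplicationkeylemma} and \ref{UniformConvergenceForNWMSequences}, and Theorem \ref{RelevanceOfpaCb} for the existence statement) by a compactness argument in an ultraproduct: a putative sequence of counterexample frequencies $\lambda_m$ is packaged into a functional $G^\prime$ on $A_\mathcal{U}(E)$ that behaves like an eigenvector of the shift $S_\mathcal{U}$, contradicting the almost weak mixing of the orbit sequence $(T^nf(x))_{n=1}^\infty$. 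Your proof is instead the classical van der Corput route (as in Assani's treatment): the identity $a_{n+h}\overline{a_n}=\lambda^h(T_\varphi^hf\cdot\bar f)(\varphi^nx)$ makes the van der Corput majorant $\lambda$-independent, and Birkhoff applied to the countable family of self-correlations produces the single exceptional set. The trade-off is instructive: your argument is more elementary and self-contained, but it leans on the multiplicative structure of Koopman operators ($T_\varphi^h$ acts on the product $f\cdot\bar f$ in a dynamically meaningful way), which is exactly what fails for the general spaCb operators of Theorem \ref{UniformWienerWintnerInIntro}; the paper's ultraproduct method exists precisely to avoid that dependence. Your $L^\infty\to L^1$ upgrade via truncation, re-projection off the Kronecker factor, and the weak $(1,1)$ maximal inequality, and your treatment of the first claim via eigenfunctions plus density, are both standard and sound.

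One step needs repair. The theorem is stated for a general probability measure preserving system, not an ergodic one, so Birkhoff applied to $T_\varphi^hf\cdot\bar f$ gives the conditional expectation $E[T_\varphi^hf\cdot\bar f\mid\mathcal{I}](x)$ on the invariant $\sigma$-algebra, not the integral $\int_XT_\varphi^hf\cdot\bar f\,d\mu$ appearing on the right-hand side of your displayed inequality. You therefore need the additional (true, but not immediate) fact that
\begin{equation*}
\lim_{H\rightarrow\infty}\frac{1}{H}\sum_{h=1}^H\bigl|E[T_\varphi^hf\cdot\bar f\mid\mathcal{I}](x)\bigr|=0\quad\text{for a.e.\ }x
\end{equation*}
whenever $f$ is orthogonal to the Kronecker factor; the hypothesis as stated only controls $\frac{1}{H}\sum_{h=1}^H|\langle T_\varphi^hf,g\rangle|$ for a \emph{fixed} $g\in L^\infty$, and passing to $|E[\cdot\mid\mathcal{I}]|$ requires testing against an $h$-dependent unimodular $\mathcal{I}$-measurable function (or, alternatively, an ergodic decomposition argument showing $f$ is orthogonal to the Kronecker factor of a.e.\ ergodic component). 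Either patch is classical, but as written your van der Corput step silently assumes ergodicity. With that addition the proof is complete.
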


A polynomial Wiener-Wintner Theorem was proven by Lesigne \cite{OriginalUniformPolynomialWW}, and uniform polynomial Wiener-Wintner Theorems were proven by Lesigne \cite{PolynomialWW} (cf. Theorem \ref{LesignePolynomialTheorem}), Frantzikinakis \cite{UniformPolynomialWW}, and Eisner and Krause \cite{UniformConvergenceOfTwistedErgodicAverages} (see also \cite{UniformNilsequenceWW}). Uniform Wiener-Wintner Theorems  for more general operators and function spaces were studied by Chilin, \c{C}\"{o}mez, and Litvinov \cite{AUWWforDSOperators}, \cite{MorePointwiseErgodicTheoremsForDSOperators}, and O'Brien \cite{NonCommutativeDSWienerWintnerTheorem}, but we emphasize that these latter results involve uniformity with respect to the points $x \in X$ rather than the coefficients $\lambda \in \mathbb{S}^1$. Furthermore, these authors do not make any weak mixing assumptions on their operators, so their results are not about uniform convergence to $0$. It is therefore natural to ask if a weak mixing assumption on more general operators, such as Dunford-Schwartz operators, can yield pointwise convergence to $0$. In general, this is not true as seen by considering multiplication operators. For example, the operator $M_e:L^1([0,1],m)\rightarrow L^1([0,1],m)$ given by $(M_ef)(x) = e(x)f(x)$ is weakly mixing, but we see that for any $f \in L^1([0,1],m)$ and any $x \in [0,1]$ and $\lambda_x: = e(-x)$ we have

\begin{equation}
    \lim_{N\rightarrow\infty}\frac{1}{N}\sum_{n = 1}^NM_e^nf(x)\lambda_x^n = f(x).
\end{equation}
While $M_e$ does not satisfy our desired weighted pointwise ergodic theorem, it does satisfy weighted (modulated) norm ergodic theorems with similar weights, such as \cite[Theorem 2.2]{OnModulatedErgodicTheorems}. 

One of our main results is a uniform Wiener-Wintner Theorem for weakly mixing operators, and in light of the previous example we require the following definition.

\begin{definition}
    Let $(X,\mathscr{B},\mu)$ be a $\sigma$-finite measure space, let $E$ be a Banach space, and let $T:L^1(X,\mu;E)\rightarrow L^1(X,\mu;E)$ be a bounded linear operator. The operator $T$ is \textbf{pointwise absolutely Ces\`aro bounded (paCb)} if there exists a constant $C > 0$ such that for any $f \in L^1(X,\mu,E)$ we have for a.e. $x \in X$ that

    \begin{equation}
       \limsup_{N\rightarrow\infty}\frac{1}{N}\sum_{n = 1}^N\|T^nf(x)\| \le C\int_X\|f\|d\mu.
    \end{equation}
    The operator $T$ is \textbf{strongly pointwise absolutely Ces\`aro bounded (spaCb)} if it is paCb and for any $f \in L^1(X,\mu;E)\cap L^\infty(X,\mu;E)$ we have $\displaystyle\sup_{n \in \mathbb{N}}\|T^nf\|_\infty \le C\|f\|_\infty$.
\end{definition}

In Section \ref{SubsectionOnThepaCbOperators} we study properties of (s)paCb operators and we consider various examples of operators with and without the (s)paCb property. For now, we only mention that compositions of ergodic Koopman operators with contractive multiplication operators are spaCb, in Examples \ref{ExamplesOfOperators}\eqref{MixingDoesNotImplypaCb}we verify that $M_e$ is not paCb even though it is strongly mixing, and in Examples \ref{ExamplesOfOperators}\eqref{NoncontractiveExample} we give an example of a spaCb operator $S$ that is not a positive operator and satisfies $\|S\|_1 = \|S\|_\infty = 2$.

We can now state our generalization of the uniform Wiener-Wintner Theorem, which is a special case of what follows from Lemma \ref{LemmaUsingNiceness} and Theorems \ref{firstapplicationkeylemma}(ii) and \ref{UniformConvergenceForNWMSequences}.

\begin{theorem}\label{UniformWienerWintnerInIntro}
    Let $(X,\mathscr{B},\mu)$ be a $\sigma$-finite measure space, let $E$ be a Banach space, and let $T:L^1(X,\mu;E)\rightarrow L^1(X,\mu;E)$ be a bounded linear spaCb operator. Then for any weakly mixing $f \in L^1(X,\mu;E)$, i.e., any $f$ satisfying
    
    \begin{equation}
        \lim_{N\rightarrow\infty}\frac{1}{N}\sum_{n = 1}^N\left|\langle T^nf,g^\prime\rangle\right| = 0,
    \end{equation}
    for all $g^\prime \in L^1(X,\mu;E)^\prime$, we have for a.e. $x \in X$ that

    \begin{equation}\label{OurWWEquationIntro}
        \lim_{N\rightarrow\infty}\sup_{\lambda \in \mathbb{S}^1}\left|\left|\frac{1}{N}\sum_{n = 1}^NT^nf(x)\lambda^n\right|\right| = 0.
    \end{equation}
\end{theorem}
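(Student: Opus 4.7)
My overall strategy is a van der Corput reduction followed by the verification of an autocorrelation-vanishing condition using the spaCb hypothesis and the weak mixing of $f$; the plan is to apply Theorem \ref{UniformConvergenceForNWMSequences} pointwise on a full-measure set $X_f\subseteq X$ to the $E$-valued sequence $n\mapsto T^nf(x)$. As a preliminary step, I would reduce to $f\in L^1\cap L^\infty$: the paCb assumption yields a weak-type maximal inequality for $\sup_N \frac{1}{N}\sum\|T^nf\|$, hence by the Banach principle the set of $f$'s for which (\ref{OurWWEquationIntro}) holds is closed in $L^1(X,\mu;E)$; since the weakly mixing functions form a closed subspace of $L^1$ in which the bounded functions are dense, it suffices to treat $f\in L^1\cap L^\infty$, in which case spaCb yields $\sup_n\|T^nf(x)\|_E \le C\|f\|_\infty$ for a.e.\ $x$.

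Next, by Hahn-Banach,
\[
\sup_{\lambda\in\mathbb{S}^1}\Big\|\frac{1}{N}\sum_{n=1}^N T^nf(x)\lambda^n\Big\|_E = \sup_{\lambda\in\mathbb{S}^1}\sup_{v'\in B_{E'}}\Big|\frac{1}{N}\sum_{n=1}^N\langle T^nf(x),v'\rangle\lambda^n\Big|,
\]
and, for each fixed $x$ and $v'$, the scalar van der Corput lemma bounds the squared supremum by
\[
\frac{2}{H+1}\sum_{h=0}^H\Big|\frac{1}{N-h}\sum_{n=1}^{N-h}\langle T^{n+h}f(x),v'\rangle\,\overline{\langle T^nf(x),v'\rangle}\Big| + O_x\!\left(\tfrac{H}{N}\right),
\]
since the factor $\lambda^h$ disappears inside the modulus. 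Taking the supremum over $v'\in B_{E'}$ and then $\limsup_N$, it suffices to establish, for a.e.\ $x$, the autocorrelation-vanishing hypothesis required by Theorem \ref{UniformConvergenceForNWMSequences}:
\[
\lim_{H\to\infty}\frac{1}{H}\sum_{h=1}^H\limsup_{N\to\infty}\sup_{v'\in B_{E'}}\Big|\frac{1}{N}\sum_{n=1}^{N-h}\langle T^{n+h}f(x),v'\rangle\,\overline{\langle T^nf(x),v'\rangle}\Big|=0.
\]

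Finally, to verify this, I would rewrite the inner expression as $\langle \frac{1}{N}\sum_{n}\overline{\langle T^nf(x),v'\rangle}\,T^{n+h}f(x),\,v'\rangle$ and invoke the spaCb pointwise ergodic theorem of Lemma \ref{LemmaUsingNiceness} to obtain a.e.\ existence of the limit of these modulated averages, uniformly in $v'\in B_{E'}$ and in the bounded weights $c_n(x,v')=\overline{\langle T^nf(x),v'\rangle}$. Theorem \ref{firstapplicationkeylemma}(ii) should then identify this limit with a pairing that, after a Ces\`aro average in $h$, is annihilated by testing the weak mixing hypothesis against a dual element $g'\in L^1(X,\mu;E)'$ built from $v'$ together with the weight. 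The main obstacle I foresee lies in the interplay of the supremum over $v'$ with the pointwise $N$-limit: because $E$ is not assumed reflexive, $\sup_{v'}$ and $\lim_N$ do not freely commute, so one has to lean on the uniform-in-$v'$ maximal estimate afforded by the $L^\infty$ clause of spaCb (not merely paCb) in order to run a Chacon-type convergence argument simultaneously for every $v'\in B_{E'}$. Securing this uniformity is, I expect, the technical heart of Lemma \ref{LemmaUsingNiceness}, and is precisely the reason that spaCb (rather than just paCb) appears in the hypothesis.
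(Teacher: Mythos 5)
Your van der Corput reduction is where the argument breaks down. For a Koopman operator the autocorrelation term $\frac{1}{N}\sum_{n}f(\varphi^{n+h}x)\overline{f(\varphi^{n}x)}$ is an honest ergodic average of the single function $T_\varphi^{h}f\cdot\overline{f}$, so Birkhoff identifies its a.e.\ limit with the fixed pairing $\langle T_\varphi^{h}f,\overline{f}\rangle$ and weak mixing of $f$ finishes the job. The operators in this theorem have no such multiplicative structure: $\langle T^{n+h}f(x),v'\rangle\,\overline{\langle T^{n}f(x),v'\rangle}$ is not $T^{n}$ applied to anything, and the paper explicitly isolates the algebra-automorphism property of Koopman operators as the ingredient spaCb operators lack (Section 2.6, where precisely this failure kills polynomial weights). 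The best one can do is package the averaged autocorrelation, along an ultrafilter $\mathcal{U}$, as $\langle T^{h}f,g'_{x,v',\mathcal{U}}\rangle$ for a functional produced by Lemma \ref{keylemma}; but your condition requires $\lim_{H}\frac{1}{H}\sum_{h\le H}\limsup_{N}\sup_{v'}|\cdots|=0$, i.e.\ Ces\`aro decay of a supremum over a family of dual elements varying with $h$ and $v'$, whereas weak mixing of $f$ only gives $\frac{1}{H}\sum_{h}|\langle T^{h}f,g'\rangle|\to 0$ for each \emph{fixed} $g'$. That uniformity over the dual family is exactly the difficulty the theorem is about, and van der Corput does not remove it --- it reappears one level down. (Your closing paragraph locates the obstacle in reflexivity and in commuting $\sup_{v'}$ with $\lim_{N}$; neither is the real issue, and Lemma \ref{LemmaUsingNiceness} is not a pointwise ergodic theorem --- it only asserts membership of the orbit in $A(E)$.)

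The paper avoids autocorrelations entirely. It shows (Lemma \ref{LemmaUsingNiceness}) that $(T^{n}f(x))_{n}\in A(E)$ a.e., transfers weak mixing of $f$ to almost weak mixing of this orbit sequence with respect to the shift $S_{\mathcal{U}}$ on the ultraproduct $A_{\mathcal{U}}(E)$ (Lemma \ref{keylemma} and Theorem \ref{firstapplicationkeylemma}(ii)), and then proves Theorem \ref{UniformConvergenceForNWMSequences} by contradiction: a sequence of counterexamples $(N_{m},\lambda_{m})$ with supporting functionals $f'_{m}$ is condensed along an ultrafilter into a single $G'\in A_{\mathcal{U}}(E)'$ satisfying $G'\circ S_{\mathcal{U}}=\lambda G'$ on bounded truncations, so that $|G'(e_{\mathcal{U}})|$ equals its own Ces\`aro average $\frac{1}{H}\sum_{h}|G'(S_{\mathcal{U}}^{h}e_{\mathcal{U}})|\to 0$ while also being at least $\epsilon$. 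A smaller but real secondary gap in your preliminary step: truncating $f$ to $f\mathbbm{1}_{\|f\|<M}$ need not preserve weak mixing, so bounded weakly mixing functions are not obviously dense in the weakly mixing functions, and your Banach-principle reduction needs the approximants themselves to satisfy the conclusion.
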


Our method of proof is as follows. We first view the sequence $\xi := (T^nf(x))_{n = 1}^\infty$ as a vector in a certain subspace of an ultraproduct that we denote by $A_\mathcal{U}(E)$, and the spaCb property of $T$ ensures that $\|\xi\|_\mathcal{U} \le C\|f\|_1$. We then show that the mixing properties of $f$ with respect to $T$ translate into mixing properties of the vector $\xi$ with respect to a left shift operator $S_\mathcal{U}$ on $A_\mathcal{U}(E)$. We then show that weighted pointwise ergodic theorems correspond to dual pairings $\langle\xi,g^\prime\rangle$ for some $g^\prime \in A_\mathcal{U}(E)^\prime$. In particular, if $\xi$ is weakly mixing with respect to $S_\mathcal{U}$, and $g^\prime$ is compact (reversible) with respect to the adjoint operator $S_\mathcal{U}^*$, then $\langle \xi,g^\prime\rangle = 0$. Lastly, we argue by contradiction that if the convergence in Equation \eqref{OurWWEquationIntro} was not uniform with respect to $\lambda$, then we could take a sequence of counterexamples that converge to some compact (reversible) functional $g^\prime$ satisfying  $\langle \xi,g^\prime\rangle \neq 0$.

The method described in the previous paragraph can be seen as a generalization of the methods used in \cite{SohailsFirstPointwiseErgodicTheorem} and \cite[Chapter 3]{SohailsPhDThesis}. This method is also flexible enough to yield uniform pointwise ergodic theorems for ergodic operators/functions as well as mildly mixing operators/functions (cf. Theorem \ref{UniformConvergenceForNMMSequences}), both of which are new even in the case of Koopman operators. Our next result is a consequence of Lemma \ref{LemmaUsingNiceness} and Theorems \ref{firstapplicationkeylemma}(i) and \ref{UniformConvergenceForCESequences}.

\begin{theorem}\label{UniformBirkhoffInIntro}
    Let $(X,\mathscr{B},\mu)$ be a $\sigma$-finite measure space, let $E$ be a Banach space, and let $T:L^1(X,\mu;E)\rightarrow L^1(X,\mu;E)$ be a bounded linear spaCb operator. Then for any sequence $(\delta_n)_{n = 1}^\infty \subseteq \mathbb{R}^+$ satisfying $\lim_{n\rightarrow\infty}\delta_n = 0$, any ergodic $f \in L^1(X,\mu;E)$, i.e., any $f$ satisfying
    
    \begin{equation}
        \lim_{N\rightarrow\infty}\frac{1}{N}\sum_{n = 1}^N\langle T^nf,g^\prime\rangle = 0,
    \end{equation}
    for all $g^\prime \in L^1(X,\mu;E)^\prime$, we have for a.e. $x \in X$ that

    \begin{alignat*}{2}
        &\lim_{N\rightarrow\infty}\sup_{(c_n)_{n = 1}^N \in \mathcal{I}(N,\delta_N)}\left|\left|\frac{1}{N}\sum_{n = 1}^NT^nf(x)c_n\right|\right| = 0\text{, where }\\
        &\mathcal{I}(N,\delta_N) := \left\{(c_n)_{n = 1}^N \in D_1^N\ |\ \frac{1}{N}\sum_{n = 1}^{N-1}|c_n-c_{n+1}| < \delta_N\right\}.
    \end{alignat*}
    
\end{theorem}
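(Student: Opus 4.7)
I would follow the ultraproduct strategy described by the authors in the paragraph after Theorem \ref{UniformWienerWintnerInIntro}, specialized from the weakly mixing to the ergodic setting. For a.e.\ $x \in X$, Lemma \ref{LemmaUsingNiceness} allows us to view the trajectory $\xi_x := (T^n f(x))_{n=1}^\infty$ as a vector in the subspace $A_\mathcal{U}(E)$ of an appropriate ultraproduct, with $\|\xi_x\|_\mathcal{U} \le C\int_X \|f\|\,d\mu$ via the spaCb hypothesis. By Theorem \ref{firstapplicationkeylemma}(i), the ergodicity of $f$ with respect to $T$ transfers to ergodicity of $\xi_x$ with respect to the left shift $S_\mathcal{U}$ on $A_\mathcal{U}(E)$; equivalently, $\langle \xi_x, g'\rangle = 0$ for every $S_\mathcal{U}^*$-invariant functional $g' \in A_\mathcal{U}(E)'$.

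Each admissible weight sequence $(c_n)_{n=1}^N \in \mathcal{I}(N,\delta_N)$ corresponds to a bounded linear functional $g'_{c,N}$ on $A_\mathcal{U}(E)$ whose pairing with $\xi_x$ reproduces the vector-valued average $\frac{1}{N}\sum_{n=1}^N T^n f(x) c_n$. Arguing by contradiction, suppose the supremum in the conclusion fails to tend to $0$ on a set of positive measure. For each such $x$ extract $N_k \to \infty$ and $c^{(k)} \in \mathcal{I}(N_k, \delta_{N_k})$ for which the corresponding averages are bounded away from $0$; since the $g'_{c^{(k)}\!,N_k}$ are uniformly bounded, weak-$*$ compactness of the dual unit ball yields a cluster point $g'_\infty$ with $\langle \xi_x, g'_\infty\rangle \neq 0$. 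The crucial step is then to verify that $g'_\infty$ is $S_\mathcal{U}^*$-invariant. Comparing $\frac{1}{N}\sum_{n=1}^N T^n f(x) c_n$ with its shifted analogue $\frac{1}{N}\sum_{n=1}^N T^n f(x) c_{n-1}$ produces a difference whose norm is controlled by $\frac{1}{N}\sum_{n=1}^{N-1}|c_n - c_{n+1}|$ plus boundary terms of order $1/N$; since this quantity is less than $\delta_N \to 0$, the functionals $g'_{c^{(k)}\!,N_k}$ are asymptotically $S_\mathcal{U}^*$-invariant, and in the ultraproduct limit this becomes exact invariance of $g'_\infty$. This contradicts the ergodicity of $\xi_x$ established above, completing the proof, with Theorem \ref{UniformConvergenceForCESequences} presumably being the abstract ultraproduct packaging of this dichotomy.

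The main obstacle I anticipate is the compactness argument: unlike the uniform Wiener-Wintner case, in which the weights $\lambda^n$ are indexed by the compact set $\mathbb{S}^1$, the weight sets $\mathcal{I}(N,\delta_N)$ live in ambient spaces of varying dimension, so one must find a common dual-space home for the functionals $g'_{c,N}$ and verify that the bounded-variation condition really does force \emph{genuine} $S_\mathcal{U}^*$-invariance of the ultraproduct limit rather than merely asymptotic invariance. Organizing the measurable dependence on $x$ so that the cluster point argument can be applied uniformly on a positive-measure set is the secondary technical point, but it appears to be handled cleanly by the ultraproduct formalism already developed for the Wiener-Wintner case.
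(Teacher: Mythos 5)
Your plan is essentially the paper's proof: the theorem is obtained by combining Lemma \ref{LemmaUsingNiceness} (to place $(T^nf(x))_{n=1}^\infty$ in $A(E)$ for a.e.\ $x$), Theorem \ref{firstapplicationkeylemma}(i) (to transfer ergodicity of $f$ to full ergodicity of the trajectory under $S_\mathcal{U}$), and a deterministic statement about fully ergodic sequences (Theorem \ref{UniformConvergenceForCESequences}), whose proof is exactly your contradiction argument with the limiting functional realized as a $\mathcal{U}$-limit rather than an abstract weak-$*$ cluster point. Your worry about measurable dependence on $x$ is moot: once full ergodicity holds on a full-measure set, the contradiction is run pointwise for each fixed $x$ in that set.

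There is one step in your sketch that fails as literally written. You claim the difference between $\frac{1}{N}\sum_{n=1}^N T^nf(x)c_n$ and its shifted analogue ``is controlled by $\frac{1}{N}\sum_{n=1}^{N-1}|c_n-c_{n+1}|$ plus boundary terms.'' In fact the difference is $\frac{1}{N}\sum_n T^nf(x)(c_n-c_{n-1})$, whose norm is only bounded by $\frac{1}{N}\sum_n \|T^nf(x)\|\,|c_n-c_{n-1}|$; since the trajectory is merely Ces\`aro bounded, not uniformly bounded, the small total variation of $(c_n)$ does not by itself make this small. The paper repairs this by truncating, setting $e_n(M) = \frac{e_n}{\|e_n\|}\min(\|e_n\|,M)$, proving exact invariance $G^\prime(S_\mathcal{U}e(M)_\mathcal{U}) = G^\prime(e(M)_\mathcal{U})$ for each fixed $M$ (where the bound $M\cdot\frac{1}{N}\sum|c_n-c_{n+1}|$ is legitimate), and then letting $M\to\infty$ using that $\|e(M)_\mathcal{U}-e_\mathcal{U}\|_\mathcal{U}\to 0$ --- which holds precisely because $(e_n)\in A(E)$, i.e.\ because $T$ is spaCb rather than merely paCb. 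This is the one place where the distinction between $A(E)$ and $\text{ces}_\infty(E)$ does real work, so your argument needs this truncation inserted; otherwise the approach is the paper's.
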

The structure of the paper is as follows. In Section 2 we record facts about Bochner spaces, filters and ultrafilters, mixing properties of operators, the spaCb property, and ultraproducts for use in future sections. We also show in Section 2.6 that the spaCb property is not strong enough to yield polynomial pointwise ergodic theorems. In Section 3 we prove our main results. In Section 4 we review various occurrences of mixing sequencees throughout the literature to help give context to the mixing sequences we define here.


\section{Preliminaries}
\subsection{Bochner spaces}\label{BochnerSpaceSubsection}
Let $(X,\mathscr{B},\mu)$ be a $\sigma$-finite measure space and let $E$ be a Banach space. A function $s:X\rightarrow E$ is called $\mu$\textbf{-simple function} if $s=\sum_{n=1}^N\xi_n1_{A_n}$ for some $N\in\mathbb{N}$, where $\xi_n\in E$ and $A_n\in \mathscr{B}$ with $\mu(A_n)<\infty$. For a $\mu$-simple function $s=\sum_{n=1}^N\xi_n1_{A_n}$ we define 
\[\int_X s d\mu = \sum_{n=1}^N \mu(A_n)\xi_n.\]
A function $f:X\rightarrow E$ is called \textbf{strongly }$\mu$-\textbf{measurable} if the exists a sequence $(s_n)_{n=1}^\infty$ of $\mu$-simple functions converging to $f$ $\mu$-almost everywhere, which is equivalent to $f$ having a separable range. Moreover a strongly $\mu$-measurable $f:X\rightarrow E$ is called \textbf{Bochner integrable} if there exists a sequence $(s_n)_{n=1}^\infty$ of $\mu$-simple functions such as 
\[\lim_{n\rightarrow\infty}\int_X \|s_n - f\| d\mu =0.\]
This gives the notion of the \textbf{Bochner integral} for a Bochner integrable $f$, defined via
\[\int_X f d\mu =\lim_{n\rightarrow\infty}\int_X s_n d\mu.\]
Notice that a strongly $\mu$-measurable $f:X\rightarrow E$ is Bochner integrable if and only if 
\[\int_X \|f\|d\mu <\infty.\]
Being equal $\mu$-almost everywhere gives an equivalence relation $\sim_{\mu}
$ on the set of all strongly $\mu$-measurable functions from $X$ to $E$. For $1\leq p <\infty$ the vector space $L^p(X,\mu;E)$ is defined as set of all equivalence classes of strongly $\mu$-measurable functions $f:X\rightarrow E$ such that
\[\|f\|_{L^p_E}:=\left(\int_X \|f\|^p d\mu\right)^{1/p} <\infty.\]
Moreover the vector space $L^\infty(X,\mu;E)$ is defined as set of all equivalence classes of strongly $\mu$-measurable functions $f:X\rightarrow E$ such that
\[\|f\|_{L^\infty_E}:=\esssup_{x\in X}\|f(x)\|<\infty.\]
For any $p\in[1,\infty]$ the spaces $(L^p(X,\mu;E),\|\cdot\|_{L^p_E})$ are complete. The following theorem is a generalization of Birkhoff's Pointwise Ergodic Theorem for Bochner spaces.
\begin{theorem}[{\cite[Theorem 4.2.1]{UKErgodicThms}}]\label{BirkhoffForBochnerSpaces}
    Let $(X,\mathscr{B},\mu,\varphi)$ be a $\sigma$-finite measure preserving system, let $E$ be a Banach space, and let $f \in L^1(X,\mu;E)$. Then

    \begin{equation}
        \lim_{N\rightarrow\infty}\frac{1}{N}\sum_{n = 1}^Nf(\varphi^nx) = f^*(x),
    \end{equation}
    where convergence takes place pointwise in the strong topology of $E$, $f^* \in L^1(X,\mu;E)$, and $T_\varphi f^* = f^*$.
\end{theorem}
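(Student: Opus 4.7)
The plan is to reduce to the scalar Birkhoff Theorem \ref{BET} via approximation by $\mu$-simple functions, controlling the approximation error with a scalar maximal inequality. Strong measurability of $f$ means $f$ has an essentially separable range, so simple-function approximation is available without additional hypotheses on $E$.

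First, I would set up the maximal control. For $f\in L^1(X,\mu;E)$, the pointwise bound
\[
\left\|\frac{1}{N}\sum_{n=1}^N f(\varphi^n x)\right\|_E \le \frac{1}{N}\sum_{n=1}^N \|f(\varphi^n x)\|
\]
shows that the left-hand side is dominated uniformly in $N$ by $M_\varphi g(x):=\sup_{N\ge 1}\frac{1}{N}\sum_{n=1}^N g(\varphi^n x)$ applied to the scalar function $g:=\|f\|\in L^1(X,\mu)$. The classical Hopf maximal inequality for the measure-preserving $\varphi$ in the $\sigma$-finite setting gives $\mu(\{x:M_\varphi g(x)>\lambda\})\le \lambda^{-1}\|g\|_1$.

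Next, I would dispatch the simple-function case. If $s=\sum_{k=1}^K \xi_k 1_{A_k}$ with $\mu(A_k)<\infty$, then
\[
\frac{1}{N}\sum_{n=1}^N s(\varphi^n x) = \sum_{k=1}^K \xi_k\cdot\frac{1}{N}\sum_{n=1}^N 1_{A_k}(\varphi^n x),
\]
and scalar Birkhoff applied to each $1_{A_k}$ yields a.e.\ convergence to some $T_\varphi$-invariant $s^*\in L^1(X,\mu;E)$. Now, given $f\in L^1(X,\mu;E)$ and $\epsilon>0$, choose a simple $s$ with $\|f-s\|_{L^1_E}<\epsilon^2$. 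Writing $A_N h:=\frac{1}{N}\sum_{n=1}^N h\circ\varphi^n$ we have
\[
\limsup_{N,M\to\infty}\|A_N f(x)-A_M f(x)\|_E \le 2\, M_\varphi(\|f-s\|)(x)
\]
since $A_N s$ already converges a.e. The weak-type inequality yields
\[
\mu\!\left(\left\{x:\limsup_{N,M}\|A_N f(x)-A_M f(x)\|_E>\epsilon\right\}\right)\le \tfrac{2}{\epsilon}\|f-s\|_{L^1_E}<2\epsilon.
\]
Intersecting over $\epsilon=2^{-k}$, the sequence $(A_N f(x))_N$ is Cauchy in $E$ a.e., and by completeness of $E$ the strong limit $f^*(x):=\lim_N A_N f(x)$ exists a.e.

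Finally, I would verify the properties of $f^*$. Fatou's lemma combined with $\|f^*(x)\|\le\liminf_N A_N(\|f\|)(x)$ and scalar Birkhoff for $\|f\|\in L^1(X,\mu)$ shows $f^*\in L^1(X,\mu;E)$. The algebraic identity $A_{N+1}f(x)=\frac{N}{N+1}A_N f(\varphi x)+\frac{1}{N+1}f(\varphi x)$ forces $f^*(x)=f^*(\varphi x)$ a.e.\ on the full-measure set where both limits exist. The main (minor) obstacle is purely bookkeeping: one must check that strong measurability is inherited by $M_\varphi(\|f-s\|)$ and by the limit $f^*$; both follow from separability of the range together with the pointwise domination above. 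In particular, no genuinely vector-valued weak-type inequality is required, because the Bochner norm of the average is pointwise majorised by a scalar Cesàro average to which Hopf's theorem applies.
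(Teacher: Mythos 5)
Your proof is correct and is precisely the reduction to the scalar case that the paper has in mind: the paper gives no proof of this theorem, citing Krengel and remarking only that it is ``quickly deduced from the simpler case in which $E = \mathbb{C}$,'' which is exactly what you carry out via simple-function approximation, the scalar (Hopf--Wiener) maximal inequality applied to $\|f-s\|$, and the a.e.\ Cauchy argument. All the steps, including the $\sigma$-finite versions of the scalar Birkhoff theorem and the weak-type $(1,1)$ bound, and the invariance identity $A_{N+1}f(x)=\tfrac{N}{N+1}A_Nf(\varphi x)+\tfrac{1}{N+1}f(\varphi x)$, check out.
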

It is worth mentioning that Theorem \ref{BirkhoffForBochnerSpaces} is quickly deduced from the simpler case in which $E 
= \mathbb{C}$. 
Similarly, if our operator is a Koopman operator $T_\varphi$, then weighted pointwise ergodic theorems for Bochner spaces follow easily from the corresponding theorems for complex-valued functions. 
The main interest in working with Bochner spaces is when we do not work with Koopman operators, as there are more bounded linear operators on $L^1(X,\mu;E)$ than there are on $L^1(X,\mu)$. For example, it is shown in \cite[Theorem 2.1.3]{AnalysisInBanachSpacesVolume1} that if $T:L^1(X,\mu)\rightarrow L^1(X,\mu)$ is a bounded linear positive operator, then it has a unique natural extension to a bounded linear operator  $\tilde{T}:L^1(X,\mu;E)\rightarrow L^1(X,\mu;E)$. We remark that the operators appearing in Examples \ref{ExamplesOfOperators} \eqref{MultiplicationOperatorsExamples} and \eqref{ErgodicComposedWithMultiplicationIsspaCb} need not arise in this fashion.

Since the map 
\begin{align*}
    &J:L^q(X,\mu)\rightarrow L^p(X,\mu)^\prime, g\mapsto J_g; \\
    &\text{with }J_g(f)=\int_X f(x)g(x)d\mu
\end{align*}
    is an isometric isomorphsim of Banach spaces for $\frac{1}{p}+\frac{1}{q} = 1$, one can ask the same for Bochner spaces. To this end, we record the following result that is a special case of Theorems 1.3.10 and 1.3.21 of \cite{AnalysisInBanachSpacesVolume1}.
    \begin{theorem}\label{ReflexivitätBochner}
Let $(X,\mathscr{B},\mu)$ be a $\sigma$-finite measure space, let $E$ be a reflexive Banach space, and let $1\leq p <\infty$. Then for $q$ satisfying $\frac{1}{p}+\frac{1}{q} = 1$, the map 
  \begin{align*}
    &I:L^q(X,\mu;E^\prime)\rightarrow L^p(X,\mu;E)^\prime, g\mapsto I_{g}; \\
    &\text{with }I_g(f)=\int_X \langle f(x),g(x)\rangle d\mu(x)
\end{align*}
is an isometric isomorphism. In particular, $L^p(X,\mu;E)$ is reflexive for all $1<p<\infty$.
    \end{theorem}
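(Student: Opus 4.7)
The plan is to establish three things: the bound $\|I_g\|\le\|g\|_{L^q_{E^\prime}}$, the matching lower bound making $I$ an isometry, and surjectivity onto $L^p(X,\mu;E)^\prime$. The first is immediate from the pointwise estimate $|\langle f(x),g(x)\rangle|\le\|f(x)\|\,\|g(x)\|$ together with the scalar H\"older inequality, once one checks that the scalar integrand $x\mapsto\langle f(x),g(x)\rangle$ is measurable (clear by approximating $f$ and $g$ simultaneously by Bochner simple functions).

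For the reverse inequality I would adapt the scalar proof via a measurable norming selection. Strong measurability of $g$ places its essential range in a separable subspace of $E^\prime$, so for a countable sequence $(e_k)_{k\ge1}$ norm-dense in the closed unit ball of $E$ and for each $\epsilon>0$, the index $k(x):=\min\{k\,|\,|\langle e_k,g(x)\rangle|\ge(1-\epsilon)\|g(x)\|\}$ is well-defined and measurable. Setting $h(x):=\overline{\alpha(x)}\,e_{k(x)}$, where $\alpha(x)$ is the unit phase of $\langle e_{k(x)},g(x)\rangle$, yields a strongly measurable unit-norm $h:X\to E$ with $\langle h(x),g(x)\rangle\ge(1-\epsilon)\|g(x)\|$. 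Plugging $f(x):=\|g(x)\|^{q-1}h(x)$ (with the usual modification at $q=\infty$ via truncation to a finite-measure subset of $\{\|g(x)\|\ge\|g\|_\infty-\delta\}$) into $I_g$ produces the matching lower bound up to a factor of $(1-\epsilon)$, which is arbitrary.

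For surjectivity, given $\phi\in L^p(X,\mu;E)^\prime$ I would reduce to a finite measure via $\sigma$-finiteness and then construct an $E^\prime$-valued Radon-Nikodym density. Defining $\nu:\mathscr{B}\to E^\prime$ by $\langle e,\nu(A)\rangle:=\phi(1_A\cdot e)$ for $e\in E$, countable additivity of $\nu$ in the norm of $E^\prime$ follows from dominated convergence applied through $\phi$, while the inequality $\|\nu(A)\|_{E^\prime}\le\|\phi\|\mu(A)^{1/p}$ provides finite variation and absolute continuity with respect to $\mu$. Since $E$ is reflexive, so is $E^\prime$, and reflexive Banach spaces have the Radon-Nikodym property; this produces a strongly measurable $g:X\to E^\prime$ with $\nu(A)=\int_A g\,d\mu$. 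Linearity plus density of $E$-valued simple functions in $L^p(X,\mu;E)$ then forces $\phi=I_g$, and the isometry established above places $g$ in $L^q(X,\mu;E^\prime)$ with $\|g\|_{L^q_{E^\prime}}=\|\phi\|$. Reflexivity of $L^p(X,\mu;E)$ for $1<p<\infty$ follows by a double application of the isomorphism together with $E^{\prime\prime}=E$.

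The main obstacle is the surjectivity step, and specifically the appeal to the Radon-Nikodym property of $E^\prime$: this is where reflexivity of $E$ is used in an essential way, and the identification fails in its absence (for instance when $E=c_0$ or $E=L^1$). The measurable norming selection in the isometry step is a secondary but genuine technicality absent from the scalar proof.
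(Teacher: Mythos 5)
The paper offers no proof of this statement: it is quoted verbatim as a special case of Theorems 1.3.10 and 1.3.21 of the cited monograph on analysis in Banach spaces, so there is no internal argument to compare against. Your sketch is essentially the standard proof of that cited result, and its architecture --- H\"older for $\|I_g\|\le\|g\|_{L^q_{E^\prime}}$, a measurable norming selection for the reverse inequality, and an $E^\prime$-valued measure resolved by the Radon--Nikod\'ym property of the (reflexive, hence RNP) dual for surjectivity --- is sound, with reflexivity of $E$ entering exactly where you say it does.

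Two steps need tightening. First, a countable norm-dense subset of the unit ball of $E$ exists only when $E$ is separable, and separability of the essential range of $g$ in $E^\prime$ does not supply one; what you actually need is a countable subset of the unit ball of $E$ that norms the separable subspace of $E^\prime$ containing that range, obtained by choosing, for each element of a countable dense subset of that subspace and each $n$, a vector in the unit ball pairing to within a factor $1-\tfrac1n$ of its norm. The rest of your selection argument then goes through unchanged. Second, the inequality $\|\nu(A)\|_{E^\prime}\le\|\phi\|\,\mu(A)^{1/p}$ gives absolute continuity but does \emph{not} give finite variation when $p>1$: summing it over a partition $\{A_i\}_{i=1}^n$ yields $\|\phi\|\sum_i\mu(A_i)^{1/p}$, which for $n$ equal pieces grows like $n^{1/q}$. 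The correct estimate chooses $e_i$ in the unit ball with $|\phi(1_{A_i}e_i)|\ge(1-\epsilon)\|\nu(A_i)\|_{E^\prime}$ and unimodular scalars $c_i$ aligning the phases, so that $\sum_i\|\nu(A_i)\|_{E^\prime}\le(1-\epsilon)^{-1}\bigl|\phi\bigl(\sum_i c_i 1_{A_i}e_i\bigr)\bigr|\le(1-\epsilon)^{-1}\|\phi\|\,\mu(X)^{1/p}$. With these repairs, and the standard truncation argument showing a posteriori that the Radon--Nikod\'ym density lies in $L^q(X,\mu;E^\prime)$ with norm $\|\phi\|$, your outline is a complete proof of the quoted theorem.
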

    
\subsection{Filters and Ultrafilters}
\begin{definition}
    A collection of subsets $\mathcal{U}\subseteq \mathcal{P}(\mathbb{N})$ is called \textbf{filter} if 
    \begin{enumerate}
        \item[(i)] $\emptyset\notin \mathcal{U}$ and $\mathbb{N}\in \mathcal{U}$.
        \item[(ii)] If $A,B\in \mathcal{U}$ then $A\cap B\in \mathcal{U}$. 
        \item[(iii)] If $A\in \mathcal{U}$ and $A\subseteq B$ then $B\in \mathcal{U}$.
    \end{enumerate}
    The collection $\mathcal{U}$ is called an \textbf{ultrafilter} if it is a filter and for any $A \subseteq \mathbb{N}$ we have the either $A \in \mathcal{U}$ or $A^c \in \mathcal{U}$.
\end{definition}
\begin{examples}
\phantom{asd}
    \begin{enumerate} 
        \item[(i)] Let $A\subseteq\mathbb{N}$ and $A_n=\{a\in A: a\leq n\}$ for all $n\in\mathbb{N}$. Then the natural density of $A$ is given by 
        \[d(A)=\lim_{n\rightarrow\infty}\frac{|A_n|}{n},\]
        provided it exists. We call the collection $\mathcal{D}$ of sets $A\subseteq\mathbb{N}$ with $d(A)=1$ the \textbf{density one filter}.
        \item[(ii)] Let $\mathcal{P}_f(\mathbb{N})$ denote the collection of finite subsets of $\mathbb{N}$. For a given sequence $(n_k)_{k\in\mathbb{N}}\subseteq \mathbb{N}$ we denote by $FS((n_k)_{k\in\mathbb{N}}):=\{\sum_{k\in A}n_k\}_{A\in \mathcal{P}_f(\mathbb{N})}$ the set of finite sums of $(n_k)_{k\in\mathbb{N}}$. A set $A\subseteq \mathbb{N}$ is called \textbf{IP-set} if there exists some $(n_k)_{k\in\mathbb{N}}$ such that $ FS((n_k)_{k\in\mathbb{N}})\subseteq A$. A set $B\subseteq \mathbb{N}$ is called \textbf{IP*-set} if $A\cap B \neq \emptyset$ whenever $A$ is an IP-set. We call the collection $\mathcal{IP}^*$ of IP*-sets the \textbf{IP*-filter}. 
        \item[(iii)] We call the collection of sets $\mathcal{P}_c:=\{A\subseteq\mathbb{N}: A^c\text{ is finite}\}$ the \textbf{cofinite filter}. The cofinite filter is also commonly known as the \textbf{Fr\'echet filter}.
        \item[(iv)] For any $n \in \mathbb{N}$, the collection $\mathcal{U}_n := \{A \subseteq \mathbb{N}\ |\ n \in A\}$ is an ultrafilter. Ultrafilters of the form $\mathcal{U}_n$ are \textbf{principal}, and all other ultrafilters are \textbf{nonprincipal} (or 
        \textbf{free}). Nonprincipal ultrafilters necessarily contain the cofinite filter.
    \end{enumerate}
\end{examples}

The density one filter, the IP*-filter, and the cofinite filter are not ultrafilters. We denote the collection of ultrafilters by $\beta\mathbb{N}$ and denote the collection of nonprincipal ultrafilters by $\beta\mathbb{N}^*$. 
\begin{definition}
    Let X be a Hausdorff space and $\mathcal{U}$ a filter. We call $x\in X$ the $\mathcal{U}$-\textbf{limit} of a sequence $(x_n)_{n = 1}^\infty\subseteq X$ if for every open neighbourhood $V$ of $x$,
    \[\{n\in\mathbb{N}:x_n\in V\}\in \mathcal{U}\]
    holds. We denote 
    \[\mathcal{U}-\lim_{n\rightarrow\infty}x_n = x\]
    if $x$ is the $\mathcal{U}$-limit of $(x_n)_{n = 1}^\infty$. If $X$ is compact and $\mathcal{U}$ is an ultrafilter, then $\mathcal{U}-\lim_{n\rightarrow\infty}x_n$ exists for any sequence $(x_n)_{n = 1}^\infty \subseteq X$.
\end{definition}

The next lemma can intuitively be viewed as a statement that the pullback $f^*\mathcal{U}$ of an ultrafilter $\mathcal{U}$ under a function $f:\mathbb{N}\rightarrow\mathbb{N}$ satisfying $f^{-1}(\mathbb{N}) \in \mathcal{U}$, is again an ultrafilter.

\begin{lemma}\label{PushforwardOfUltrafilters}
    Let $\mathcal{U} \in \beta\mathbb{N}$ be arbitrary and let $\{N_m : m \in \mathbb{N}\} \in \mathcal{U}$ be such that $N_{m+1} > N_m$. The collection $\mathcal{V} := \{B\ |\ \{N_m : m \in B\} \in \mathcal{U}\}$ is an ultrafilter. Furthermore, $\mathcal{V}$ is nonprincipal if and only if $\mathcal{U}$ is nonprincipal.
\end{lemma}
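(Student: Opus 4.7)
The plan is to reframe the claim via the strictly increasing (hence injective) function $f\colon \mathbb{N}\to\mathbb{N}$ defined by $f(m) = N_m$. Under this notation, $\mathcal{V} = \{B\subseteq\mathbb{N} : f(B)\in\mathcal{U}\}$ and the hypothesis reads $f(\mathbb{N})\in\mathcal{U}$. The key property I will use throughout is that injectivity forces $f(B_1\cap B_2) = f(B_1)\cap f(B_2)$ and $f(B)\sqcup f(B^c) = f(\mathbb{N})$ as a disjoint union.

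First, I would verify the three filter axioms. The empty set is excluded from $\mathcal{V}$ because $f(\emptyset) = \emptyset\notin\mathcal{U}$, and $\mathbb{N}\in\mathcal{V}$ by the hypothesis on $\{N_m : m\in\mathbb{N}\}$. Closure under finite intersections follows from injectivity of $f$: if $B_1,B_2\in\mathcal{V}$, then $f(B_1\cap B_2) = f(B_1)\cap f(B_2)\in\mathcal{U}$. Upward closure is immediate because $B\subseteq B'$ implies $f(B)\subseteq f(B')$, so $f(B')\supseteq f(B)\in\mathcal{U}$ forces $f(B')\in\mathcal{U}$.

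Next, for the ultrafilter property, I fix an arbitrary $B\subseteq\mathbb{N}$ and use injectivity of $f$ to decompose $f(\mathbb{N}) = f(B)\sqcup f(B^c)$. Since $f(\mathbb{N})\in\mathcal{U}$ and $\mathcal{U}$ is an ultrafilter, exactly one of $f(B)$ or $f(B^c)$ lies in $\mathcal{U}$, which translates into $B\in\mathcal{V}$ or $B^c\in\mathcal{V}$, as required.

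Finally, for the nonprincipality equivalence, $\mathcal{V}$ is principal if and only if some singleton $\{k\}$ lies in $\mathcal{V}$, equivalently $\{N_k\} = f(\{k\})\in\mathcal{U}$, equivalently $\mathcal{U}$ is the principal ultrafilter at $N_k$; for the reverse direction, if $\mathcal{U} = \mathcal{U}_n$ then $f(\mathbb{N})\in\mathcal{U}$ forces $n = N_k$ for some $k$, which produces $\{k\}\in\mathcal{V}$. I do not anticipate any genuine obstacles in this argument; the only point worth pinning down is the disjointness $f(B)\cap f(B^c) = \emptyset$, which is precisely where the hypothesis $N_{m+1}>N_m$ (i.e., injectivity of $f$) is used, and which makes every set-theoretic manipulation above go through cleanly.
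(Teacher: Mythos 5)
Your proof is correct and follows essentially the same route as the paper's: both verify the filter axioms directly, use injectivity of $m\mapsto N_m$ to pass intersections and complements through the image, and derive the ultrafilter property from the decomposition of $\{N_m : m\in\mathbb{N}\}$ into the images of $B$ and $B^c$. The only difference is cosmetic (you phrase everything via the function $f$ and spell out the nonprincipality equivalence, which the paper calls immediate).
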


\begin{proof}
    It is clear that $\mathbb{N} \in \mathcal{V}$ and $\emptyset \notin \mathcal{V}$. It is also clear that if $A \in \mathcal{V}$ and $A \subseteq B$, then $B \in \mathcal{V}$.
    Next, we see that if $B_1,B_2 \in \mathcal{V}$, then $\{N_m : m \in B_1\cap B_2\} = \{N_m : m \in B_1\}\cap\{N_m : m \in B_2\}$, so $B_1\cap B_2 \in \mathcal{V}$. Lastly, we see that for any $B \subseteq \mathbb{N}$ we have $\{N_m : m \in B\}\cup\{N_m : m \in B^c\} \in \mathcal{U}$, hence one of $B$ or $B^c$ must be in $\mathcal{V}$. Now that we have shown that $\mathcal{V}$ is an ultrafilter, the latter claim is immediate.
\end{proof}
\subsection{Mixing properties of operators}
Let $E$ be a Banach space and $T$ a  bounded linear operator on $E$. We call $T$ \textbf{weakly almost periodic} if for any $\xi \in E$ the set $\{T^n\xi \ |\ n\in\mathbb{N}\}$ is weakly pre-compact. The operator $T$ is called \textbf{mean ergodic} if
    \[\lim_{N\rightarrow\infty}\frac{1}{N}\sum_{n=1}^N T^n \xi\]
    converges in norm for all $\xi\in E$. The convergence induces a decomposition of the space as $E = \text{fix}(T)\oplus\overline{(I-T)E}$, where $\text{fix}(T) := \{\xi \in E\ |\ T\xi = \xi\}$. It is well known that any power bounded operator on a reflexive Banach space is weakly almost periodic, and that any weakly almost periodic operator on a Banach space is mean ergodic (see \cite[Chapter 8.4]{OTAoET}). Moreover it is  worthwhile mentioning that if $T$ a positive and  power-bounded  operator on a scalar-valued $L^1$-space, then mean ergodicity is equivalent to weak almost periodicity (see \cite{OneParameterOperatorSemigroups}, Theorem 3.1.11).

    Another classical decomposition that is satisfied by weakly almost periodic operators is the Jacobs-deLeeuw-Glicksberg Decomposition (see \cite[Chapter 16.3]{OTAoET}), which says that $E =E_{rev}\oplus E_{aws}$, where
    \begin{alignat*}{2}
        E_{rev} &=\overline{\text{lin}_{\mathbb{C}}}\{\xi \in E\ |\ T\xi = \lambda\xi\text{ for some }\lambda \in \mathbb{S}^1\},\\
        E_{aws}&=\{\xi \in E\ |\ T^{n_j}\xi\overset{w}{\rightarrow}0\text{ for some }(n_j)_{j = 1}^\infty \subseteq \mathbb{N}\}\\
        &= \left\{\xi \in E\ |\ \lim_{N\rightarrow\infty}\frac{1}{N}\sum_{n = 1}^N|\langle T^n\xi,g^\prime\rangle| = 0\ \forall\ g^\prime \in E^\prime\right\}.
    \end{alignat*}
 The spaces $E_{rev}$ and $E_{aws}$ are known as the reversible part and the almost weakly stable part of $E$ respectively.
\begin{definition}
    Let $E$ be a Banach space, let $\xi \in E$, and let $T$ be a bounded linear operator on $E$. We call $\xi$
    \begin{enumerate}[(i)]
    \item \textbf{ergodic} (with respect to $T$) if for any $g^\prime \in E^\prime$ we have

    \begin{equation}
        \lim_{N\rightarrow\infty}\frac{1}{N}\sum_{n = 1}^N\langle T^n\xi,g^\prime\rangle = 0.
    \end{equation}

    \item  \textbf{weakly mixing} (with respect to $T$) if for any $g^\prime \in E^\prime$ we have

    \begin{equation}
        \lim_{N\rightarrow\infty}\frac{1}{N}\sum_{n = 1}^N\left|\langle T^n\xi,g^\prime\rangle\right| = 0.
    \end{equation}

    \item  \textbf{mildly mixing} (with respect to $T$) if for any $g^\prime \in E^\prime$ we have

    \begin{equation}
        \mathcal{IP}^*-\lim_{n\rightarrow\infty}\langle T^n\xi,g^\prime\rangle = 0.
    \end{equation}

    \item  \textbf{strongly mixing} (with respect to $T$) if for any $g^\prime \in E^\prime$ we have

    \begin{equation}
        \lim_{n\rightarrow\infty}\langle T^n\xi,g^\prime\rangle = 0.
    \end{equation}

    The operator $T$ is \textbf{fully mean ergodic / weakly mixing / mildly mixing/ strongly mixing} if every $\eta \in E$ is ergodic / weakly mixing / mildly mixing / strongly mixing with respect to $T$.
    \end{enumerate}
    \begin{remark}
A weakly mixing element $\xi\in E$ with respect to a power bounded operator $T$ can be defined equivalently via
\[\mathcal{D}-\lim_{n\rightarrow\infty} \langle T^n \xi, g^\prime\rangle =0\]
for all $g^\prime\in E^\prime$ (see \cite[Theorem 9.15]{OTAoET}). A characterisation and more details about mixing properties of operators can be found in \cite[Chapter \uproman{2}]{stability}.

    \end{remark}
    
\end{definition}
\begin{theorem}\label{koopmanism}
    Let $\mathcal{X} := (X,\mathscr{B},\mu,\varphi)$ be a measure preserving system and $T_\varphi$ the Koopman operator on $L^1(X,\mu)$. Then the following assertions hold.
    \begin{enumerate}
        \item[(i)] The system $\mathcal{X}$ is ergodic if and only if every $f\in L^1(X,\mu)$ with $\int_X fd\mu=0$ is ergodic with respect to $T_\varphi$.
        \item[(ii)] The system $\mathcal{X}$ is weakly mixing if and only if every $f\in L^1(X,\mu)$ with $\int_X fd\mu=0$ is weakly mixing with respect to $T_\varphi$.
        \item[(iii)] The system $\mathcal{X}$ is mildly mixing if and only if every $f\in L^1(X,\mu)$ with $\int_X fd\mu=0$ is mildly mixing with respect to $T_\varphi$.
        \item[(iv)] The system $\mathcal{X}$ is strongly mixing if and only if every $f\in L^1(X,\mu)$ with $\int_X fd\mu=0$ is strongly mixing with respect to $T_\varphi$.
    \end{enumerate}
\end{theorem}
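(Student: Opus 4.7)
The plan is to reduce each of (i)--(iv) to the classical $L^2$ correlation characterization of the corresponding mixing property, and then pass between $L^2$ and $L^1$--$L^\infty$ test functions via a density argument. I will use that the Koopman operator $T_\varphi$ is a contraction on every $L^p(X,\mu)$, and that when $\mu$ is a probability measure (implicit in the mixing notions in play) one has $L^\infty(X,\mu) \subseteq L^2(X,\mu) \subseteq L^1(X,\mu)$. The classical characterizations, which I would cite, state that $\mathcal{X}$ is ergodic, weakly mixing, mildly mixing, or strongly mixing if and only if, for all $f, g \in L^2(X,\mu)$ with $\int f\,d\mu = 0$, the correlation $\int (T_\varphi^n f)\overline{g}\,d\mu$ tends to $0$ in the Ces\`aro, absolute-Ces\`aro, $\mathcal{IP}^*$, or ordinary sense, respectively.

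For the forward implication in each item, I would fix $f \in L^1(X,\mu)$ with $\int f\,d\mu = 0$ and $g \in L^\infty(X,\mu)$, and approximate $f$ in $L^1$-norm by bounded zero-mean functions $f_k$ (e.g.\ $f_k := f \cdot 1_{\{|f|\le k\}} - \int_X f\cdot 1_{\{|f|\le k\}}\,d\mu$). The uniform-in-$n$ bound
\[
|\langle T_\varphi^n(f - f_k), g\rangle| \le \|g\|_\infty \|f - f_k\|_1
\]
then transfers the $L^2$ correlation statement for $f_k$ (applied to $g \in L^\infty \subseteq L^2$) to the corresponding statement for $f$, because each of the four limit notions in play is stable under uniform-in-$n$ perturbation. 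For the Ces\`aro, absolute-Ces\`aro, and ordinary limits this is immediate; for the $\mathcal{IP}^*$-limit it rests on the fact that IP*-sets are closed under supersets, so if $\{n : |\langle T_\varphi^n f_k, g\rangle| < \varepsilon/2\}$ is IP* and $\|g\|_\infty \|f - f_k\|_1 < \varepsilon/2$, then $\{n : |\langle T_\varphi^n f, g\rangle| < \varepsilon\}$ is IP* as well.

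For the backward implication, I would apply the hypothesis to the test pairs $f := 1_A - \mu(A)$ and $g := 1_B$ with $A, B \in \mathscr{B}$, noting that $\langle T_\varphi^n f, g\rangle = \mu(\varphi^{-n}A \cap B) - \mu(A)\mu(B)$; this reproduces the standard set-theoretic definitions of the four mixing properties for $\mathcal{X}$. For (i) the sharpest choice is $A = B$ a putative $\varphi$-invariant set, in which case $T_\varphi^n f = f$ and the ergodicity hypothesis on $f$ directly forces $\mu(A)(1 - \mu(A)) = 0$. The main obstacle I anticipate is verifying the $\mathcal{IP}^*$-stability step carefully, since the other three cases are controlled by routine norm estimates; beyond this, the argument is a straightforward unification of classical facts.
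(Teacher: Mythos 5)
Your argument is correct, but note that the paper does not actually prove Theorem \ref{koopmanism}: it simply cites \cite[Chapters 8,9]{OTAoET} for parts (i), (ii), (iv) and leaves (iii) to the reader. So there is no in-paper proof to match against. Your route --- reduce to the classical measure-theoretic/$L^2$ correlation characterizations of ergodicity, weak, mild, and strong mixing, then pass to zero-mean $f \in L^1$ by truncation and the uniform bound $|\langle T_\varphi^n(f-f_k),g\rangle| \le \|g\|_\infty\|f-f_k\|_1$ (valid because $T_\varphi$ is an $L^1$-isometry) --- is sound, and your handling of the $\mathcal{IP}^*$ case is the one point that genuinely needs care: you correctly reduce it to the fact that supersets of IP*-sets are IP*, and for the $L^2$ input you need the Furstenberg--Weiss characterization of mild mixing via $\mathcal{IP}^*$-limits of correlations, which is the right citation to supply since the paper gives none for (iii). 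The backward directions via $f = 1_A - \mu(A)$, $g = 1_B$ (and $A=B$ invariant for ergodicity) are also fine. The cited source would instead argue operator-theoretically (mean ergodic theorem, fixed spaces, Jacobs--deLeeuw--Glicksberg), which generalizes more readily to the Banach-space-valued setting the paper needs in Theorem \ref{meanergodic}; your density argument is more elementary and self-contained but is specific to the scalar $L^1$ setting. The only cosmetic point is that the theorem is stated for a ``measure preserving system'' while the four mixing notions presuppose a probability space; you flag this correctly, and it should be made explicit.
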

\noindent A proof of the statements (i),(ii), (iv) of Theorem \ref{koopmanism} can be found \cite[Chapters 8,9]{OTAoET}. The following theorem gives a generalization of Theorem \ref{koopmanism} for the vector valued case.
\begin{theorem}\label{meanergodic}
    Let $(X,\mathscr{B},\mu,\varphi)$ be a measure preserving system and $E$ a Banach space. Then the Koopman operator $T_\varphi$ is mean ergodic on $L^1(X,\mu;E)$. In particular if $(X,\mathscr{B},\mu,\varphi)$ is ergodic / weakly mixing / mildly mixing / strongly mixing and $\int_X f d\mu = 0$ holds, then $f$ is ergodic / weakly mixing / mildly mixing / strongly mixing with respect to $T_\varphi$.
    \begin{proof}
Since $T_\varphi$ is mean ergodic on $L^1(X,\mu)$, for all $\xi\in E$ and $B\in \mathscr{B}$ we see that
   \[\|\frac{1}{H}\sum_{h=1}^H T_\varphi^h(\mathbbm{1}_B\xi)-
   \mu(B)\xi\|_{L^1_E}=\|\frac{1}{H}\sum_{h=1}^H T_\varphi^h \mathbbm{1}_B-
   \mu(B) \mathbbm{1}\|_{L^1}\|\xi\|\ \rightarrow 0.\]
   Hence  
   \[\lim_{H\rightarrow\infty}\|\frac{1}{H}\sum_{h=1}^H T_\varphi^h f-
   (\int_Xfd\mu)\cdot \mathbbm{1} \|_{L^1_E}=0\]
   follows for any $\mu$-simple function $f\in L^1(X,\mu;E)$. Since $T_\varphi$ is power bounded on $L^1(X,\mu;E)$  Banach–Steinhaus theorem yields the first assertion. If $(X,\mathscr{B},\mu,\varphi)$ is weakly mixing / mildly mixing / strongly mixing, let $p = \mathcal{D}/\mathcal{IP}^*/\mathcal{P}_c$. For given $\xi\in E$, $g^\prime\in (L^1(X,\mu;E))^\prime$ and $f\in L^1(X,\mu)$ we define 
        \[(f\xi)(x):=f(x)\xi \quad \text{and}\quad g^\prime_\xi(f)=\langle f\xi, g^\prime\rangle.\]
    Hence $f\xi \in L^1(X,\mu;E)$, $g^\prime_\xi\in (L^1(X,\mu))^\prime$ and 
        \[p-\lim_n |\langle T^n_\varphi \mathbbm{1}_A \xi - \mu(A)\xi,g^\prime \rangle|=p-\lim_n |\langle T^n_\varphi \mathbbm{1}_A - \mu(A) ,g_\xi^\prime\rangle| =0.\]
        Since the set of $\mu$-simple functions is dense in $L^1(X,\mu;E)$ the desired results follow.
        
    \end{proof}

\end{theorem}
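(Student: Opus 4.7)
The plan is to bootstrap everything from the scalar-valued case (Theorem~\ref{koopmanism}) by exploiting the density of $\mu$-simple functions in $L^1(X,\mu;E)$ together with the tensor-product compatibility $T_\varphi^h(\mathbbm{1}_B\xi) = (T_\varphi^h\mathbbm{1}_B)\cdot\xi$ for $B\in\mathscr{B}$ and $\xi\in E$.

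For mean ergodicity I would first observe that $T_\varphi$ is an isometry, hence power-bounded, on $L^1(X,\mu;E)$. On a basic tensor $\mathbbm{1}_B\xi$ the identity above yields
\begin{equation*}
    \left\|\frac{1}{H}\sum_{h=1}^HT_\varphi^h(\mathbbm{1}_B\xi) - \mu(B)\xi\right\|_{L^1_E} = \|\xi\|\cdot\left\|\frac{1}{H}\sum_{h=1}^HT_\varphi^h\mathbbm{1}_B - \mu(B)\mathbbm{1}\right\|_{L^1},
\end{equation*}
which tends to zero by scalar mean ergodicity on $L^1(X,\mu)$. Linearity extends this to every $\mu$-simple function, and density together with uniform boundedness of the Ces\`aro operators yields norm convergence on all of $L^1(X,\mu;E)$ via the standard Banach--Steinhaus argument.

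For the inheritance of mixing properties I would introduce a slicing device: given $\xi\in E$ and $g^\prime\in L^1(X,\mu;E)^\prime$, set $g^\prime_\xi(h):=\langle h\xi,g^\prime\rangle$, which defines $g^\prime_\xi\in L^1(X,\mu)^\prime$. For a mean-zero simple function $f=\sum_{j=1}^N\mathbbm{1}_{A_j}\xi_j$, the hypothesis $\sum_j\mu(A_j)\xi_j = 0$ produces the cancellation
\begin{equation*}
    \langle T_\varphi^n f,g^\prime\rangle = \sum_{j=1}^N g^\prime_{\xi_j}\bigl(T_\varphi^n\mathbbm{1}_{A_j} - \mu(A_j)\mathbbm{1}\bigr),
\end{equation*}
and Theorem~\ref{koopmanism} applied to each mean-zero scalar function $\mathbbm{1}_{A_j} - \mu(A_j)\mathbbm{1}$ delivers convergence to zero along the appropriate filter (the density-one filter $\mathcal{D}$ for weakly mixing, $\mathcal{IP}^*$ for mildly mixing, the cofinite filter $\mathcal{P}_c$ for strongly mixing, and the Ces\`aro mode without absolute values for ergodic). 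The finite sum inherits each of these convergence modes since the relevant filters are closed under finite intersections.

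To pass from simple to arbitrary $f\in L^1(X,\mu;E)$ with $\int f\,d\mu=0$, given $\varepsilon>0$ I would choose a $\mu$-simple function $s$ within $\varepsilon$ of $f$ in $L^1_E$-norm and adjust it by subtracting $(\int s\,d\mu)\mathbbm{1}$, producing a mean-zero simple function within $2\varepsilon$ of $f$. The isometry $\|T_\varphi^n(f-s)\|_{L^1_E} = \|f-s\|_{L^1_E}$ then gives the uniform-in-$n$ bound $|\langle T_\varphi^n(f-s),g^\prime\rangle|\le 2\varepsilon\|g^\prime\|$. I expect the main technical point to be checking that this uniform $\varepsilon$-error cooperates with each filter convergence: for Ces\`aro-type modes a $\limsup$ argument suffices, whereas for $\mathcal{IP}^*$ and $\mathcal{P}_c$ one uses that these are genuine topological filter limits in $\mathbb{C}$, hence compatible with uniform approximation.
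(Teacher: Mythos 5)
Your proposal is correct and follows essentially the same route as the paper: mean ergodicity is obtained by checking basic tensors $\mathbbm{1}_B\xi$, extending by linearity and density via Banach--Steinhaus, and the mixing properties are transferred through the same slicing functionals $g^\prime_\xi$ applied to mean-zero scalar indicators, with density of $\mu$-simple functions closing the argument. The only difference is that you spell out the mean-zero adjustment and the uniform $\varepsilon$-error control in the final approximation step more explicitly than the paper does.
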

\subsection{Strongly Pointwise absolutely Ces\`aro bounded (spaCb) operators}\label{SubsectionOnThepaCbOperators}

We begin with two results that will not be used later on in this paper, but they show that the spaCb property is strong enough to replace the maximal inequalities that are usually needed in the study of pointwise ergodic theorems.

\begin{theorem}\label{RelevanceOfpaCb}
    Let $E$ be a Banach space, let $(X,\mathscr{B},\mu)$ be a $\sigma$-finite measure space, and let $(E_2,\|\cdot\|_{E_2})$ be a Banach space for which $E_2 \subseteq L^1(X,\mu;E)$ and $\|\xi\|_1 \le \|\xi\|_{E_2}$ for all $\xi \in E_2$. Let $T:L^1(X,\mu;E)\rightarrow L^1(X,\mu;E)$ be a bounded linear spaCb operator such that $T(E_2) \subseteq E_2$ and the restriction of $T$ to $E_2$ is weakly almost periodic\footnote{We remark that $T:E_2\rightarrow E_2$ is a bounded operator due to the closed graph theorem.}. For any $f \in E_2$ there exists $X_f \in \mathscr{B}$ with $\mu(X\setminus X_f) = 0$ such that for any $x \in X_f$ and any $\lambda \in \mathbb{S}^1$, the following limit exists in the strong topology of $E$:

    \begin{equation}
        \lim_{N\rightarrow\infty}\frac{1}{N}\sum_{n = 1}^NT^nf(x)\lambda^n.
    \end{equation}
\end{theorem}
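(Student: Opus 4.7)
The plan is to combine the Jacobs--deLeeuw--Glicksberg decomposition of $T|_{E_2}$ with a density argument in which the spaCb property plays the role of a maximal inequality that is automatically uniform in $\lambda$. Since $T|_{E_2}$ is weakly almost periodic, write $E_2 = E_{2,\mathrm{rev}} \oplus E_{2,\mathrm{aws}}$ and decompose $f = f_{\mathrm{rev}} + f_{\mathrm{aws}}$. For $\lambda \in \mathbb{S}^1$, $N \in \mathbb{N}$, and $h \in L^1(X,\mu;E)$ set $A_N^\lambda h(x) := \frac{1}{N}\sum_{n=1}^N \lambda^n T^n h(x)$; the pointwise bound $\|A_N^\lambda h(x)\| \le \frac{1}{N}\sum_{n=1}^N \|T^n h(x)\|$ together with the spaCb inequality gives
\[
\sup_{\lambda \in \mathbb{S}^1}\limsup_{N \to \infty}\|A_N^\lambda h(x)\| \le C\|h\|_1 \quad \text{for a.e. } x \in X,
\]
and it is precisely this uniformity in $\lambda$ that will let a countable approximation succeed despite $\mathbb{S}^1$ being uncountable.

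On the reversible part, the finite linear span $D$ of eigenvectors of $T|_{E_2}$ is dense in $E_{2,\mathrm{rev}}$. For any eigenvector $g$ with $Tg = \gamma g$, $\gamma \in \mathbb{S}^1$, one has $A_N^\lambda g(x) = g(x)\cdot\frac{1}{N}\sum_{n=1}^N (\gamma\lambda)^n$, which converges for every $\lambda$ and every $x$, and by linearity the same holds for every $g \in D$. Pick $g_k \in D$ with $\|g_k - f_{\mathrm{rev}}\|_{E_2}\to 0$, so also $\|g_k - f_{\mathrm{rev}}\|_1 \to 0$, and let $X_k$ be the full-measure set on which the maximal bound above holds with $h = f_{\mathrm{rev}} - g_k$. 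On $X^{\mathrm{rev}}_f := \bigcap_k X_k$ the triangle inequality yields
\[
\limsup_{N,M \to \infty}\sup_{\lambda \in \mathbb{S}^1}\|A_N^\lambda f_{\mathrm{rev}}(x) - A_M^\lambda f_{\mathrm{rev}}(x)\| \le 2C\|f_{\mathrm{rev}} - g_k\|_1 \xrightarrow{k \to \infty} 0,
\]
so $(A_N^\lambda f_{\mathrm{rev}}(x))_N$ is Cauchy in $E$, hence convergent, for every $\lambda \in \mathbb{S}^1$.

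For the almost weakly stable summand, the JdLG description identifies $f_{\mathrm{aws}}$ as a weakly mixing vector for $T|_{E_2}$. Because the inclusion $E_2 \hookrightarrow L^1(X,\mu;E)$ is continuous and $T^n f_{\mathrm{aws}} \in E_2$ for all $n$, any functional $g'' \in L^1(X,\mu;E)'$ restricts to an element of $E_2'$ with the same action on the orbit of $f_{\mathrm{aws}}$, so $f_{\mathrm{aws}}$ is also weakly mixing in the sense required by Theorem \ref{UniformWienerWintnerInIntro}. That theorem then furnishes a full-measure set $X^{\mathrm{aws}}_f$ on which $\lim_N \sup_{\lambda \in \mathbb{S}^1}\|A_N^\lambda f_{\mathrm{aws}}(x)\| = 0$. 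Setting $X_f := X^{\mathrm{rev}}_f \cap X^{\mathrm{aws}}_f$ and adding the two pieces gives the claimed convergence.

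The main obstacle is conceptual rather than technical: the set $\mathbb{S}^1$ is uncountable, so one cannot simply take a union of null sets indexed by $\lambda$. The spaCb property resolves this because the relevant maximal expression is automatically $\lambda$-uniform (each $|\lambda^n|$ equals one), and this is the whole point of introducing the property — it replaces the Hopf/Dunford--Schwartz type maximal inequality normally invoked at this step in classical pointwise Wiener--Wintner arguments.
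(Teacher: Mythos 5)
Your proof is correct and follows essentially the same route as the paper: the Jacobs--deLeeuw--Glicksberg splitting of $f$ inside $E_2$, Theorem \ref{UniformWienerWintnerInIntro} applied to the almost weakly stable part after restricting functionals from $L^1(X,\mu;E)^\prime$ to $E_2^\prime$, and a density argument on the reversible part in which the spaCb bound plays the role of the maximal inequality. One cosmetic slip: in your displayed Cauchy estimate the $\sup_{\lambda\in\mathbb{S}^1}$ should sit outside the $\limsup_{N,M\to\infty}$ (or $\lambda$ should simply be fixed), since for an eigenvector $g$ with $Tg=\gamma g$ the convergence of $\frac{1}{N}\sum_{n=1}^N(\gamma\lambda)^n$ is not uniform in $\lambda$ near $\overline{\gamma}$; the conclusion you actually draw is the fixed-$\lambda$ statement, which is all the theorem requires.
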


\begin{proof}
    Since $T:E_2\rightarrow E_2$ is weakly almost periodic, we may use the Jacobs-deLeeuw-Glicksberg decomposition to write $f = f_1+f_2$ with $f_1 \in E_{rev}$ and $f_2 \in E_{aws}$. We see that if $g^\prime \in L^1(X,\mu;E)^\prime$ and $\xi \in E_2$, then
    
    \begin{equation}
         |\langle\xi,g^\prime\rangle| \le \|\xi\|_1\|g^\prime\| \le \|\xi\|_{E_2}\|g^\prime\|,
    \end{equation}
    so $g^\prime$ restricted to $E_2$ is an element of $E_2^\prime$. In particular, we see that

    \begin{equation}
        \lim_{N\rightarrow\infty}\frac{1}{N}\sum_{n = 1}^N|\langle T^nf_2,g^\prime\rangle| = 0,
    \end{equation}
    so $f_2 \in L^1(X,\mu;E)$ is weakly mixing. Theorem \ref{UniformWienerWintnerInIntro} tells us there exists $X_f' \in \mathscr{B}$ with $\mu(X\setminus X_f') = 0$, such that for all $x \in X_f'$ and all $\lambda \in \mathbb{S}^1$ we have
\vskip -3mm
    \begin{equation}
        \lim_{N\rightarrow\infty}\frac{1}{N}\sum_{n = 1}^NT^nf_2(x)\lambda^n = 0.
    \end{equation}
    For each $k \in \mathbb{N}$, there exists $g_k \in E_2$ that is a finite linear combination of eigenfunctions of $T$ and satisfies $\|f_1-g_k\|_1 \le \|f_1-g_k\|_{E_2}< \frac{1}{k}$. Recalling that $T$ is spaCb, we see that there exists $X_k \in \mathscr{B}$ with $\mu(X\setminus X_k) = 0$ such that for all $x \in X_k$ and all $\lambda \in \mathbb{S}^1$ we have
\vskip -7mm
    \begin{equation}
        \lim_{N\rightarrow\infty}\frac{1}{N}\sum_{n = 1}^NT^ng_k(x)\lambda^n\text{ exists, and }\limsup_{N\rightarrow\infty}\frac{1}{N}\sum_{n = 1}^N||T^n(f_1-g_k)(x)|| \le C||f_1-g_k||_1 < \frac{C}{k}.
    \end{equation}
    It follows that $X_f := X_f'\cap\bigcap_{k = 1}^\infty X_k$ has full measure, and that for all $x \in X_f$ and $\lambda \in \mathbb{S}^1$ we have
    \begin{equation}
\lim_{N\rightarrow\infty}\frac{1}{N}\sum_{n = 1}^NT^nf_1(x)\lambda^n\text{ exists, hence}\lim_{N\rightarrow\infty}\frac{1}{N}\sum_{n = 1}^NT^nf(x)\lambda^n\text{ exists.}
    \end{equation}
\end{proof}

\begin{corollary}
    Let $E$ be a reflexive Banach space, let $(X,\mathscr{B},\mu)$ be a probability space, and let $1 < p < \infty$ be arbitrary. Let $T:L^1(X,\mu;E)\rightarrow L^1(X,\mu;E)$ be a bounded linear spaCb operator such that $T$ maps $L^p(X,\mu;E)$ into itself and the restriction of $T$ to $L^p(X,\mu;E)$ is power bounded. For any $f \in L^p(X,\mu;E)$, there exists $X_f \in \mathscr{B}$ with $\mu(X\setminus X_f) = 0$ such that for any $x \in X_f$ and any $\lambda \in \mathbb{S}^1$, the following limit exists in the strong topology of $E$:

    \begin{equation}
        \lim_{N\rightarrow\infty}\frac{1}{N}\sum_{n = 1}^NT^nf(x)\lambda^n.
    \end{equation}
\end{corollary}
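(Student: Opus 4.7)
The plan is to deduce this corollary directly from Theorem \ref{RelevanceOfpaCb} by taking $E_2 = L^p(X,\mu;E)$ with its usual norm $\|\cdot\|_{L^p_E}$. To invoke that theorem I need to verify its three hypotheses on $E_2$: (a) that $E_2 \subseteq L^1(X,\mu;E)$ continuously with $\|\xi\|_1 \le \|\xi\|_{L^p_E}$; (b) that $T(E_2)\subseteq E_2$; and (c) that the restriction $T|_{E_2}$ is weakly almost periodic.

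For (a), because $\mu$ is a probability measure, Hölder's inequality applied to $\|\xi(\cdot)\|_E$ with the exponents $p$ and its conjugate gives $\|\xi\|_1 \le \|\xi\|_{L^p_E}$. Condition (b) is given by hypothesis. For (c), this is where the reflexivity of $E$ and the range $1 < p < \infty$ enter: by Theorem \ref{ReflexivitätBochner}, $L^p(X,\mu;E)$ is a reflexive Banach space. The restriction $T|_{L^p(X,\mu;E)}$ is assumed to be power bounded, and the excerpt already records (citing \cite[Chapter 8.4]{OTAoET}) that every power bounded operator on a reflexive Banach space is weakly almost periodic. Hence (c) holds as well.

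With the three hypotheses verified, Theorem \ref{RelevanceOfpaCb} applies to any $f \in L^p(X,\mu;E) = E_2$ and yields a full-measure set $X_f$ on which the weighted averages $\frac{1}{N}\sum_{n=1}^N T^n f(x)\lambda^n$ converge in the strong topology of $E$ for every $\lambda \in \mathbb{S}^1$ simultaneously. There is no real obstacle here; the corollary is essentially a packaging of Theorem \ref{RelevanceOfpaCb} in the classical $L^p$ setting, with reflexivity of $E$ supplying reflexivity of the Bochner space (via Theorem \ref{ReflexivitätBochner}), and power-boundedness on a reflexive space upgrading automatically to weak almost periodicity.
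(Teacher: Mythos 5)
Your proof is correct and is essentially the same as the paper's: both take $E_2 = L^p(X,\mu;E)$, use Theorem \ref{ReflexivitätBochner} for reflexivity, upgrade power boundedness to weak almost periodicity, and apply Theorem \ref{RelevanceOfpaCb}. Your explicit check of the embedding $\|\xi\|_1 \le \|\xi\|_{L^p_E}$ via H\"older on a probability space is a small detail the paper leaves implicit.
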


\begin{proof}
    Theorem \ref{ReflexivitätBochner} tells us that $L^p(X,\mu;E)$ is reflexive. Since $T:L^p(X,\mu;E)\rightarrow L^p(X,\mu;E)$ is power bounded, we see that $T$ is a weakly almost periodic operator on $L^p(X,\mu;E)$, so the desired result now follows from Theorem \ref{RelevanceOfpaCb}.
\end{proof}

\begin{examples}\label{ExamplesOfOperators}
    We will now look at examples of operators with and without the (s)paCb property. In these examples $(X,\mathscr{B},\mu)$ will be a standard probability space and $E$ a reflexive Banach space. In light of Chacon's Theorem, we will mostly focus on examples that are contractions on $L^1(X,\mu;E)$ and $L^\infty(X,\mu;E)$, which we call $L^1-L^\infty$ contractions. 
    \begin{enumerate}[(i)]
        \item The Koopman operator of a measure preserving transformation, or more generally, of a Markoff process, is a $L^1-L^\infty$ contraction. If the measure preserving transformation is ergodic, or the Markoff process is irreducible, then the corresponding Koopman operator is spaCb as a consequence of Birkhoff's pointwise ergodic theorem in the former case and Hopf's pointwise ergodic theorem in the latter.
        
        \item\label{MultiplicationOperatorsExamples} If $f \in L^\infty(X,\mu)$ satisfies $\|f\|_\infty \le 1$, then multiplication operator $M_f(g)(x) := f(x)g(x)$ is a $L^1(X,\mu)-L^\infty(X,\mu)$ contraction. More generally, if $\mathcal{L}_1(E)$ denotes the set of linear operators $T$ on $E$ satisfying $\|T\| \le 1$, then for any $f:X\rightarrow \mathcal{L}_1(E)$ that is strongly measurable with respect to the Borel $\sigma$-algebra of the strong operator topology\footnote{We use this $\sigma$-algebra because \cite[Proposition 1.1.28]{AnalysisInBanachSpacesVolume1} tells us in this case that if $g:X\rightarrow E$ is strongly measurable, then $M_f(g):X\rightarrow E$ is also strongly measurable.}, the operator $M_f(g)(x) = f(x)(g(x))$ is a $L^1(X,\mu;E)-L^\infty(X,\mu;E)$ contraction. However, $M_f$ will not be a paCb operator if the range of $f$ consists of isometries. To see this, let $(X,\mathscr{B},\mu)$ be non-atomic, let $C > 0$ be arbitrary and take any $A \in \mathscr{B}$ with $0 < \mu(A) < \frac{1}{C}$ and any $\xi \in E$ and consider $g = \xi\mathbbm{1}_A$. We see that for any $x \in A$, we have

        \begin{equation}
            C\int_X\|g\|d\mu = C\|\xi\|\mu(A) < \|\xi\| = \lim_{N\rightarrow\infty}\frac{1}{N}\sum_{n = 1}^N\|g(x)\| = \lim_{N\rightarrow\infty}\frac{1}{N}\sum_{n = 1}^N\|(M_f^ng)(x)\|
        \end{equation}
        If $r(f(x))<1$ holds for a.e. $x\in X$ (where $r$ denotes the spectral radius) then $M_f$ is paCb. This follows directly since $\displaystyle\lim_{n\rightarrow\infty}\|(f(x))^n\|=0$
        holds for a.e. $x\in X$. 
        
        \item A composition of $L^1-L^\infty$ contractions is once again a $L^1-L^\infty$ contraction. However, the (s)paCb property need not be preserved by composition. To see this, let $\alpha \in \mathbb{R}\setminus\mathbb{Q}$  be arbitrary and let $\varphi:[0,1]\rightarrow[0,1]$ be given by $\varphi(x) = x+\alpha\pmod{1}$. Since $\varphi$ and $\varphi^{-1}$ are ergodic measure preserving transformations, we see that $T_\varphi$ and $T_{\varphi^{-1}}$ are (s)paCb, but $\text{Id} = T_\varphi\circ T_{\varphi^{-1}}$ is not (s)paCb.

        \item\label{ErgodicComposedWithMultiplicationIsspaCb} Let $f:X\rightarrow\mathcal{L}_1(E)$ be strongly measurable and let $\varphi:X\rightarrow X$ be an ergodic measure preserving transformation. The operator $M_fT_\varphi$ is spaCb, because for any $g \in L^1(X,\mu;E)$ we have

        \begin{equation}
            \limsup_{N\rightarrow\infty}\frac{1}{N}\sum_{n = 1}^N\|((M_fT_\varphi)^ng)(x)\| \le \lim_{N\rightarrow\infty}\frac{1}{N}\sum_{n = 1}^N\|g(\varphi^nx)\| = \int_X\|g\|d\mu,
        \end{equation}
        for a.e. $x \in X$, where the final equality follows from Birkhoff's pointwise ergodic theorem. In particular the operator $S_{f,\varphi}:L^1(X,\mu;E)\rightarrow L^1(X,\mu;E)$ with 
                \[S_{f,\varphi}(g)(x):=\langle g(\varphi x), f(x)\rangle g(\varphi x)\]
          for given $f\in L^\infty(X;E^\prime)$ is paCb.
              \item\label{BadForPolynomialsExample} Let $\alpha \in \mathbb{R}\setminus\mathbb{Q}$. The operator $U_\alpha:L^1([0,1],m)\rightarrow L^1([0,1],m)$ given by $(U_\alpha f)(x) = e(x)f(x+\alpha)$ is a strongly mixing $L^1-L^\infty$ contraction that is also a spaCb operator. In light of the previous examples, it suffices to show that $U$ is strongly mixing. To this end, let $\epsilon > 0$ be arbitrary, let $g(x) = \sum_{j = -k}^kc_je(jx)$ be such that $\|f-g\|_1 < \epsilon$, and observe that for any $h \in L^\infty([0,1],m)$ with $\|h\|_\infty \le 1$ we have

        \begin{alignat*}{2}
            &\limsup_{N\rightarrow\infty}\left|\langle U^nf,h\rangle\right| \le \epsilon+\limsup_{N\rightarrow\infty}\left|\langle U^ng,h\rangle\right| \le \epsilon+\sum_{j = -k}^k|c_j|\limsup_{N\rightarrow\infty}\left|\int_0^1 e((j+N)x)\overline{h(x)}dx\right| = \epsilon,
        \end{alignat*}
        where the final equality follows from the Riemann-Lebesgue lemma.

        \item\label{MixingDoesNotImplypaCb} We now give an example of a $L^1-L^\infty$ contraction that is strongly mixing but not paCb in order to show that the paCb property is not implied by a mixing property. As in the introduction, let $M_e:L^1([0,1],m)\rightarrow L^1([0,1],m)$ be given by $(M_ef)(x) = e(x)f(x)$. We have already seen $M_e$ is a $L^1-L^\infty$ contraction that is not paCb, so it only remains to show that $M_e$ is strongly mixing. To see this, we observe that for any $f \in L^1([0,1],\mu)$ and $h \in L^\infty([0,1],\mu)$ we have

        \begin{equation}
            \lim_{N\rightarrow\infty}\langle M_e^Nf,h\rangle = \lim_{N\rightarrow\infty}\int_0^1e(Nx)f(x)\overline{h}(x)d\mu(x) = 0,
        \end{equation}
        where the final equality follows from the Riemann-Lebesgue lemma.

  
  \item\label{paCbButNotME} The orbit of any $f\in L^\infty(X,\mu)$ under a spaCb operator is relatively compact in the weak topology of $L^1(X,\mu)$, since the $L^\infty$-unit ball is weakly compact in $L^1(X,\mu)$. Hence every spaCb operator is mean ergodic (see e.g. \cite{OTAoET}, Theorem 8.20). We will now give an example of linear contractions $T,S:L^1([0,1],m)\rightarrow L^1([0,1],m)$ that are not mean ergodic even though they are both paCb. In particular, $T$ and $S$ are not spaCb. In fact, $T$ is not a bounded operator on $L^\infty([0,1],m)$ and $\|S^m\|_\infty = 2^m$. Let $T$ and $S$ be given by

        \begin{alignat*}{2}
            &Tf = \sum_{n = 0}^\infty 2^{(n+1)^2+1}\left(\int_{2^{-n-1}}^{2^{-n}}f(y)dy\right)\mathbbm{1}_{[2^{-(n+1)^2-1},2^{-(n+1)^2}]}\text{, and}\\
            &Sf = \sum_{n = 0}^\infty 2^{n+2}\left(\int_{2^{-n-1}}^{2^{-n}}f(y)dy\right)\mathbbm{1}_{[2^{-n-2},2^{-n-1})}.
        \end{alignat*}
        Since $\|T\mathbbm{1}_{[2^{-n-1},2^{-n}]}\|_\infty = 2^{(n+1)^2-n}$, we see that $T$ does not map $L^\infty([0,1],m)$ into itself. To see that $\|S^m\|_\infty = 2^m$, we see that for any $f$ with $\|f\|_\infty = M < \infty$ and any $n \ge 0$, we have 

        \begin{alignat*}{2}
            &\|(S^mf)\mathbbm{1}_{[2^{-n-1-m},2^{-n-m})}\|_\infty = 2^{n+1+m}\left|\int_{2^{-n-1}}^{2^{-n}}f(y)dy\right| \le 2^{n+1+m}\int_{2^{-n-1}}^{2^{-n}}Mdy = 2^mM\text{, and}\\
            &\|(S^mf)\mathbbm{1}_{[2^{-k-1},2^{-k})}\|_\infty = 0\text{ for }0 \le k < m.
        \end{alignat*}
        
        To see that $T$ is a contraction on $L^1$, we see that for $f \in L^1$ we have

        \begin{alignat*}{2}
            &\|Tf\|_1 = \int_0^1|(Tf)(x)|dx \le \sum_{n = 0}^\infty \int_0^12^{(n+1)^2+1}\left|\int_{2^{-n-1}}^{2^{-n}}f(y)dy\right|\mathbbm{1}_{[2^{-(n+1)^2-1},2^{-(n+1)^2}]}(x)dx\\
            =&\sum_{n = 0}^\infty\left|\int_{2^{-n-1}}^{2^{-n}}f(y)dy\right| \le \sum_{n = 0}^\infty\int_{2^{-n-1}}^{2^{-n}}|f(y)|dy = \int_0^1|f(y)|dy = \|f\|_1.
        \end{alignat*}
        A similar calculation shows that $S$ is a contraction on $L^1$. To see that $R \in \{T,S\}$ is paCb, it suffices to observe that for any $f \in L^1([0,1],m)$ and $x \in (0,1]$, we have that $R^nf(x) = 0$ for all $n \ge \lceil-\log_2(x)\rceil+1$. It remains to show that $T$ and $S$ is are not mean ergodic. Let $R \in \{T,S\}$, let $a_1 = 1$, and let $a_n = (a_{n-1}+1)^2$ if $R = T$, and let $a_n = a_{n-1}+1$ if $R = S$. Let $g = \sum_{n = 1}^\infty \mathbbm{1}_{[2^{-a_{b_n}-1},2^{-a_{b_n}}]}$, where $(b_n)_{n = 1}^\infty$ is an enumeration of $\bigcup_{m = 1}^\infty[2^{2m},2^{2m+1})$, and let $f = \mathbbm{1}_{[\frac{1}{2},1]}$. Since $R^nf = 2^{a_n}\mathbbm{1}_{[2^{-a_n-1},2^{-a_n}]}$, we see that

        \begin{alignat*}{2}
            &\limsup_{N\rightarrow\infty}\frac{1}{N}\sum_{n = 1}^N\langle R^nf,g\rangle \ge \lim_{m\rightarrow\infty}\frac{1}{2^{2m+1}}\sum_{j = 1}^m2^{2j-1} = \lim_{m\rightarrow\infty}\frac{4^{m+1}-1}{6\cdot2^{2m+1}} = \frac{1}{3},\text{ while}\\
            &\liminf_{N\rightarrow\infty}\frac{1}{N}\sum_{n = 1}^N\langle R^nf,g\rangle \le \lim_{m\rightarrow\infty}\frac{1}{2^{2m+2}}\sum_{j = 1}^m2^{2j-1} = \frac{1}{6}.
        \end{alignat*}

        \item We recall that for $1 \le p \le \infty$, $\ell^p(\mathbb{Z}) = L^p(\mathbb{Z},\nu)$ where $\nu$ is the counting measure on $\mathbb{Z}$. For the left shift $\varphi:\mathbb{Z}\rightarrow\mathbb{Z}$ given by $\varphi(n) = n+1$, the Koopman operator $T_\varphi$ is not mean ergodic on $\ell^1(\mathbb{Z})$ even though it is a $\ell^1(\mathbb{Z})-\ell^\infty(\mathbb{Z})$ contraction. We can also verify that $T_\varphi$ is paCb (hence spaCb) since for any $f \in \ell^1(\mathbb{Z})$ we have

        \begin{equation}
            \lim_{n\rightarrow\infty}f(n) = 0\text{, hence }\lim_{N\rightarrow\infty}\frac{1}{N}\sum_{n = 1}^N(T_\varphi^nf)(m) = 0,
        \end{equation}
        for all $m \in \mathbb{Z}$. In particular, a $L^1-L^\infty$ contraction that is also spaCb need not be mean ergodic. We now give another example of this phenomenon on a probability space.
        
        Let $\alpha \in \mathbb{R}\setminus\mathbb{Q}$ be arbitrary and let $S:L^1([0,1],m;\ell^1(\mathbb{Z}))\rightarrow L^1([0,1],m;\ell^1(\mathbb{Z}))$ be given by $(Sf)(x) = T_\varphi(f(x+\alpha))$, and note that $S$ is a spaCb operator as a result of Examples \ref{ExamplesOfOperators}\eqref{ErgodicComposedWithMultiplicationIsspaCb}. However, if $\xi \in \ell^1(\mathbb{Z})$ is such that

        \begin{equation}
            \lim_{N\rightarrow\infty}\frac{1}{N}\sum_{n = 1}^NT_\varphi^n\xi,
        \end{equation}
        does not exist in the norm topology of $\ell^1(\mathbb{Z})$, then for $\xi\mathbbm{1}_X \in L^1(X,\mu;\ell^1(\mathbb{Z}))$, we see that

        \begin{equation}
            \lim_{N\rightarrow\infty}\frac{1}{N}\sum_{n = 1}^NS^n\xi\mathbbm{1}_X,
        \end{equation}
        does not converge in the norm topology of $L^1(X,\mu;\ell^1(\mathbb{Z}))$.

        \item\label{NoncontractiveExample} We now consider an example of an operator $S$ that is spaCb, not a positive operator, and satisfies $\|S\|_1 = \|S\|_{\infty} = 2$. Let $\alpha \in (0,\frac{1}{2})\setminus\mathbb{Q}$ and let $\varphi:[0,1]\rightarrow[0,1]$ be given by $\varphi(x) = x+\alpha\pmod{1}$. Let $F = 2i\mathbbm{1}_{[0,\alpha)}+\frac{1}{2i}\mathbbm{1}_{[\alpha,2\alpha)}+\mathbbm{1}_{[2\alpha,1]}$, and let $S = M_FT_\varphi$. We see that $S^n = M_{F_n}T_\varphi^n$, for some $F_n:[0,1]\rightarrow\{\frac{1}{2i},1,2i\}$, so $\|S^n\|_1 = \|S^n\|_\infty = 2$ for all $n \in \mathbb{N}$. To see that $S$ is spaCb, we recall that $\varphi$ is ergodic, hence for any $f \in L^1([0,1],m)$ and a.e. $x \in X$ we have

        \begin{equation}
            \limsup_{N\rightarrow\infty}\frac{1}{N}\sum_{n = 1}^N|S^nf(x)| \le \lim_{N\rightarrow\infty}\frac{1}{N}\sum_{n = 1}^N2|f(\varphi^nx)| = 2\int_0^1f(y)dy.
        \end{equation}
    \end{enumerate}
\end{examples}

\begin{remark}\label{LampertiOperators}    Given a $\sigma$-finite measure space $(Y,\mathscr{A},\nu)$ and some $1 \le p < \infty$, an operator $T:L^p(Y,\nu)\rightarrow L^p(Y,\nu)$ is a \textbf{Lamperti operator} if for any $f,g \in L^p(Y,\nu)$ satisfying $fg = 0$, we have $TfTg = 0$. Lamperti \cite{OnTheIsometriesOfCertainFunctionSpaces} showed that any isometry on $L^p(Y,\nu)$ for $p \in [1,2)\cup(2,\infty)$ is a Lamperti operator. Kan \cite{ErgodicPropertiesOfLampertiOperators} showed that if $T$ is a Lamperti operator then $T = M_fT_\phi$ for some measurable $f:Y\rightarrow\mathbb{C}$ and some measurable $\phi:Y\rightarrow Y$ satisfying $\mu(\phi^{-1}A) = 0$ when $\mu(A) = 0$. 
        Since contractive Lamperti operators on $L^p(Y,\nu)$ with $p \in (1,\infty)$ satisfy the pointwise ergodic theorem (see \cite{ErgodicPropertiesOfLampertiOperators}), they are good candidates for operators satisfying a paCb-like property on $L^p$.
        However, within the set $S$ of positive invertible isometries on $L^1([0,1],m)$, the subset of elements satisfying the pointwise ergodic theorem is a set of first category in the strong operator topology (see \cite{L1PETFailsForMostInvertibleIsometries}), which suggests that they are unlikely to be paCb.
        Furthermore, a Lamperti operator $T = M_fT_\phi$ will not be spaCb if $f$ is unbounded.
\end{remark}


\subsection{Sequence spaces and Ultraproducts}\label{UltraproductsSubsection}
\subsubsection{Sequence spaces}
We now identify some spaces that arise naturally when considering sequences of the form $(T^nf(x))_{n = 1}^\infty$ with $f \in L^1(X,\mu;E)$. Later, we will consider mappings of these spaces into ultraproduct spaces. 

For a given Banach space $E$ we define
\begin{alignat*}{2}
    &\text{ces}_\infty(E):= \left\{(x_n)_{n = 1}^\infty\in E^\mathbb{N}\ |\ \sup_{N \in \mathbb{N}}\frac{1}{N}\sum_{n = 1}^N\|x_n\| < \infty\right\},\text{ and}\\
    &A(E):= \Bigg\{(x_n)_{n = 1}^\infty\in \text{ces}_\infty(E)\ |\ \forall\ \epsilon > 0\ \exists\ (e_n)_{n = 1}^\infty \subseteq E\text{ with }\sup_{n \in \mathbb{N}}\|e_n\| < \infty\text{ and }\\
    &\textcolor{white}{A(E):= \Big\{(x_n)_{n = 1}^\infty\in \text{ces}_\infty(E)\ |\ }\limsup_{N \rightarrow \infty}\frac{1}{N}\sum_{n = 1}^N\|x_n-e_n\| < \epsilon\Bigg\}
\end{alignat*}
Then $(\text{ces}_\infty(E),\|\cdot\|_c)$ is a normed space with 
\[\|(x_n)_{n = 1}^\infty\|_c:=\sup_{N \in \mathbb{N}}\frac{1}{N}\sum_{n = 1}^N\|x_n\|.\]
It is worth mentioning that the space $\text{ces}_\infty(\mathbb{C})$ is just the Ces\`aro sequence space ces$_\infty$ that is discussed in \cite{CesaroFunctionSpaces} and the references therein. We also observe that the product topology on $E^n$ is induced by the norm 
\begin{equation}
    \|(x_1,\cdots,x_n)\|_n := \frac{1}{n}\sum_{k = 1}^n\|x_n\|.
\end{equation}
\begin{lemma}
The space $(\text{ces}_\infty(E),\|\cdot\|_c)$ is a Banach space.    
\end{lemma}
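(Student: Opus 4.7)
The plan is to verify first that $\|\cdot\|_c$ is genuinely a norm on $\text{ces}_\infty(E)$, and then establish completeness by the standard argument for sequence spaces, with the twist that the Cesàro-type norm controls coordinates only up to a factor linear in the index.

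Checking that $\|\cdot\|_c$ is a norm is routine: positive homogeneity and the triangle inequality transfer immediately from $\|\cdot\|$ on $E$ because $\sup$ and finite averages preserve both. Non-degeneracy follows by taking $N=1$ in the supremum to force $\|x_1\| = 0$, and inductively noting that $\frac{1}{N}\sum_{n=1}^N \|x_n\| = 0$ forces $\|x_n\| = 0$ for every $n$.

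For completeness, I would take a Cauchy sequence $(x^{(k)})_{k=1}^\infty$ in $\text{ces}_\infty(E)$ with $x^{(k)} = (x^{(k)}_n)_{n=1}^\infty$, and first show that each coordinate sequence $(x^{(k)}_n)_{k=1}^\infty$ is Cauchy in $E$. The key elementary estimate is
\[
\|x^{(k)}_n - x^{(j)}_n\| \;\le\; \sum_{m=1}^{n} \|x^{(k)}_m - x^{(j)}_m\| \;\le\; n\,\|x^{(k)} - x^{(j)}\|_c,
\]
so for each fixed $n$ the coordinate sequence is Cauchy and, by completeness of $E$, converges to some $x_n \in E$. Set $x := (x_n)_{n=1}^\infty$.

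To finish, fix $\varepsilon > 0$ and choose $K$ so that $\|x^{(k)} - x^{(j)}\|_c < \varepsilon$ for all $k,j \ge K$. For any $N \in \mathbb{N}$ and $k \ge K$,
\[
\frac{1}{N}\sum_{n=1}^{N} \|x^{(k)}_n - x^{(j)}_n\| < \varepsilon,
\]
and since this involves only finitely many terms I may let $j \to \infty$ using continuity of the norm on $E$ to obtain $\frac{1}{N}\sum_{n=1}^{N} \|x^{(k)}_n - x_n\| \le \varepsilon$. Taking the supremum over $N$ gives $\|x^{(k)} - x\|_c \le \varepsilon$, which simultaneously shows that $x^{(k)} - x \in \text{ces}_\infty(E)$ and hence $x = (x^{(k)} - (x^{(k)} - x)) \in \text{ces}_\infty(E)$, and that $x^{(k)} \to x$ in $\|\cdot\|_c$. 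I do not expect any serious obstacle; the only subtlety worth flagging is the coordinatewise Cauchy step, where the Cesàro average loses a factor of $n$ but this loss is harmless because $n$ is fixed before passing to the limit in $k$.
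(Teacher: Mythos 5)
Your proposal is correct. Both you and the paper begin the same way, extracting coordinatewise limits from the Cauchy sequence (your estimate $\|x^{(k)}_n-x^{(j)}_n\|\le n\,\|x^{(k)}-x^{(j)}\|_c$ is the same observation the paper phrases via the product norms $\|\cdot\|_n$ on $E^n$). Where you diverge is the finishing step: the paper argues by contradiction, passing to a subsequence of putative counterexamples and inductively constructing indices $N_k$, $m_k$ whose partial averages violate the Cauchy property, whereas you use the standard direct argument of freezing a finite truncation $\frac{1}{N}\sum_{n=1}^N\|x^{(k)}_n-x^{(j)}_n\|<\varepsilon$, letting $j\to\infty$ term by term, and then taking the supremum over $N$. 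Your route is shorter and has the additional virtue of making explicit that the limit sequence lies in $\text{ces}_\infty(E)$ (via $x=x^{(k)}-(x^{(k)}-x)$), a point the paper's proof leaves implicit; the paper's contradiction argument buys nothing extra here. Your remark that the factor of $n$ lost in the coordinatewise estimate is harmless because $n$ is fixed before letting $k\to\infty$ is exactly the right subtlety to flag.
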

 \begin{proof}
We only need to show that $(\text{ces}_\infty(E),\|\cdot\|_c)$ is complete. We see that if $((x_{n,m})_{n = 1}^\infty)_{m = 1}^\infty$ is a Cauchy sequence in $\text{ces}_\infty(E)$, then $(x_{1,m},\cdots,x_{n,m})_{m = 1}^\infty$ is a Cauchy sequence in $E^n$ since $\|(y_{1,m},\cdots,y_{n,m})\|_n \le \|(y_{n,m})_{n = 1}^\infty\|_c$, so $((x_{1,m},\cdots,x_{n,m}))_{m = 1}^\infty$ converges in $E^n$ to some $(z_{1,n},\cdots,z_{n,n})$. Since for $n_1 < n_2$, $E^{n_1}$ is naturally identified with a closed subspace of $E^{n_2}$, we see that $z_{i,n_1} = z_{i,n_2} = z_i$ for $1 \le i \le n_1$. Now let us assume for the sake of contradiction that $((x_{n,m})_{n = 1}^\infty)_{m = 1}^\infty$ does not converge to $(z_n)_{n = 1}^\infty$ in $\|\cdot\|_c$ as $m\rightarrow\infty$. By passing to a subsequence $((x_{n,m_k})_{n = 1}^\infty)_{k = 1}^\infty$, which we will again denote by $((x_{n,m})_{n = 1}^\infty)_{m = 1}^\infty$, we may assume without loss of generality that for some $\epsilon > 0$ we have $\|(x_{n,m}-z_n)_{n = 1}^\infty\|_c > \epsilon$ for all $m\in\mathbb{N}$, while $\lim_{m\rightarrow\infty}x_{n,m} = z_n$ for all $n \in \mathbb{N}$. We inductively construct $(N_k)_{k = 1}^\infty$ and $(m_k)_{k = 1}^\infty$ for which $\|x_{n,m_k}-z_n\| < \frac{\epsilon}{2}$ for all $1 \le n \le N_k$, and $\frac{1}{N_{k+1}}\sum_{n = 1}^{N_{k+1}}\|x_{n,m_k}-z_n\| > \epsilon$. We now see that for any $k \in \mathbb{N}$ we have

\begin{alignat*}{2}
    &\|(x_{n,m_k}-x_{n,m_{k+1}})_{n = 1}^\infty\|_c \ge\frac{1}{N_{k+1}}\sum_{n = 1}^{N_{k+1}}\|x_{n,m_k}-x_{n,m_{k+1}}\|\\
    \ge&\frac{1}{N_{k+1}}\sum_{n = 1}^{N_{k+1}}\|x_{n,m_k}-z_n\|-\frac{1}{N_{k+1}}\sum_{n = 1}^{N_{k+1}}\|z_n-x_{n,m_{k+1}}\| > \frac{\epsilon}{2},
\end{alignat*}
which contradicts the fact that $((x_{n,m_k})_{n = 1}^\infty)_{k = 1}^\infty$ is a Cauchy sequence.
\end{proof}
To see that $A(E)$ is a closed subspace of $\text{ces}_\infty(E)$, it suffices to observe that

\begin{alignat*}{2}
    &A(E)= \Bigg\{(x_n)_{n = 1}^\infty\in \text{ces}_\infty(E)\ |\ \forall\ \epsilon > 0\ \exists\ (e_n)_{n = 1}^\infty \subseteq E\text{ with }\sup_{n \in \mathbb{N}}\|e_n\| < \infty\text{ and }\\
    &\textcolor{white}{A(E):= \Big\{(x_n)_{n = 1}^\infty\in \text{ces}_\infty(E)\ |\ }\sup_{N \in \mathbb{N}}\frac{1}{N}\sum_{n = 1}^N\|x_n-e_n\| < \epsilon\Bigg\}.
\end{alignat*}
\subsubsection{Ultraproducts}
The goal of this subsection is to define the space $A_\mathcal{U}(E)$, which is a closed subspace of an ultraproduct of Banach spaces. For a more detailed discussion of the usage of ultraproducts in Banach space theory, the reader is referred to \cite{UltraproductsInBanachSpaceTheory}.\\

\noindent Let $(E_n,\|\cdot\|_n)_{n = 1}^\infty$ be a family of Banach spaces and $\mathcal{U}\in \beta\mathbb{N}^*$. Consider the Banach space
\[\ell^\infty(E_n)=\{(x_n)_{n = 1}^\infty\in\prod_{n = 1}^\infty E_n:\sup_{n \in \mathbb{N}}\|x_n\|_n<\infty\}\]
and its closed subspace 
\[N_{\mathcal{U}}=\{(x_n)_{n = 1}^\infty\in \ell^\infty(E_n): \mathcal{U}-\lim_{n\rightarrow\infty}\|x_n\|_n =0\}.\]
We call the quotient space 
\[(E_n)_{\mathcal{U}}:=\ell^\infty(E_n)/N_{\mathcal{U}}\]
the \textbf{ultraproduct} of $(E_n)_{n = 1}^\infty$ (with respect to $\mathcal{U}$). We denote its elements by $[(x_n)_{n = 1}^\infty]_{\mathcal{U}}^*$.
The norm on $(E_n)_{\mathcal{U}}$ is given by 
\[\|[(x_n)_{n = 1}^\infty]_\mathcal{U}^*\|^*_{\mathcal{U}}=\mathcal{U}-\lim_n\|x_n\|_n.\]
Now let us fix a Banach space $E$ and for $n\in\mathbb{N}$ let $E_n:=E^n$ be given the norm 
\begin{equation}
    \|(x_1,\cdots,x_n)\|_n := \frac{1}{n}\sum_{k = 1}^n\|x_k\|.
\end{equation}
We define 
\[i: \text{ces}_\infty(E)\rightarrow \ell^\infty (E_n),\  (x_n)_{n = 1}^\infty\mapsto (x_n(k)_{k\leq n})_{n = 1}^\infty\]
with $x_n(k)=x_k$ for all $k,n\in\mathbb{N}$. Since $i$ is a linear isometry, $i(A(E))$ is a closed subspace of $\ell^\infty (E_n)$. We consider the space 
\[A_{\mathcal{U}}(E):=i(A(E))/(i(A(E))\cap N_\mathcal{U}).\] 
To see that $A_\mathcal{U}(E)$ is naturally identified as a closed subspace of $(E_n)_\mathcal{U}$, it suffices to observe that $i((x_n)_{n = 1}^\infty)+(i(A(E))\cap N_\mathcal{U})\mapsto i((x_n)_{n = 1}^\infty)+N_\mathcal{U}$ is a well defined linear injective contraction since $i(A(E))\cap N_\mathcal{U} \subseteq N_\mathcal{U}$. The norm $\|\cdot\|_\mathcal{U}$ on $A_\mathcal{U}(E)$ is given by
\[\|[i(x_n)_{n = 1}^\infty]_\mathcal{U}\|_\mathcal{U}=\mathcal{U}-\lim_n \frac{1}{n}\sum_{k = 1}^n\|x_n(k)\|=\mathcal{U}-\lim_n \frac{1}{n}\sum_{k = 1}^n\|x_k\|\]
for all $(x_n)_{n=1}^\infty \in A(E)$. 
For many calculations we will write $x_\mathcal{U}$ to denote $[i(x_n)_{n = 1}^\infty]_\mathcal{U}$.
In other situations in which more detail is required, we will write $[(x_n)_{n = 1}^\infty]_\mathcal{U}$ to denote $[i(x_n)_{n = 1}^\infty]_\mathcal{U}$.
\subsubsection{The shift operator}
The shift operator $S$ is given by 
\[S:A(E)\rightarrow A(E), \ (x_n)_{n = 1}^\infty\mapsto (x_{n+1})_{n = 1}^\infty,\]
and we denote by $S_\mathcal{U}$ the induced map 
\[S_\mathcal{U}:A_\mathcal{U}(E)\rightarrow A_\mathcal{U}(E), \ [i(x_n)_{n = 1}^\infty]_\mathcal{U}\mapsto [i(x_{n+1})_{n = 1}^\infty]_\mathcal{U}.\]
on $A_\mathcal{U}(E)$. We first need to show that $S_\mathcal{U}$ is well defined, i.e., $S$ leaves $i(A(E))\cap N_\mathcal{U}$ invariant. We begin by showing that for $(x_n)_{n = 1}^\infty \in A(E)$ we have $\lim_{n\rightarrow\infty}n^{-1}\|x_n\| = 0$. Let $\epsilon > 0$ be arbitrary, and let $(e_n)_{n = 1}^\infty$ be a bounded sequence in $E$ for which $\|(x_n)_{n = 1}^\infty-(e_n)_{n = 1}^\infty\|_c < \epsilon$. We see that

\begin{equation*}
    \epsilon \ge \limsup_{N\rightarrow\infty}\frac{1}{N}\sum_{n = 1}^N\|x_n-e_n\| \ge \limsup_{N\rightarrow\infty}\frac{\|x_N-e_N\|}{N} = \limsup_{N\rightarrow\infty}\frac{\|x_N\|}{N}.
\end{equation*} 
Returning to the well definedness of $S_\mathcal{U}$, let $(x_{n})_{n = 1}^\infty \in A(E)$ be such that 
\[\mathcal{U}-\lim_{N\rightarrow\infty} \frac{1}{N}\sum_{n = 1}^N\|x_n\|=0.\]
We see that
\[\mathcal{U}-\lim_{N\rightarrow\infty} \frac{1}{N}\sum_{n = 1}^N\|x_{n+1}\|=\mathcal{U}-\lim_{N\rightarrow\infty} \frac{1}{N}\sum_{n = 1}^N(\|x_{n+1}\|-\|x_{n}\|)= \mathcal{U}-\lim_{N\rightarrow\infty}\frac{\|x_{N+1}\|-\|x_1\|}{N} = 0.\]
In fact, we even see that $S_\mathcal{U}$ is a linear isometry on $A_\mathcal{U}(E)$. 
We remark that the shift map would not be well defined on $i(\text{ces}_\infty(E))/(i(\text{ces}_\infty(E))\cap N_\mathcal{U})$, and this is the reason that our main results need the spaCb property instead of just the paCb property (see also Lemma \ref{LemmaUsingNiceness}).

\subsubsection{Mixing sequences and pointwise orbits}
\begin{definition}\label{DefinitionOfMixingSequences}
    Let $E$ be a Banach space. We call a sequence $(x_n)_{n = 1}^\infty \in A(E)$
    \begin{enumerate}[(i)]    
        \item \textbf{fully ergodic} if for any $\mathcal{U} \in \beta\mathbb{N}^*$,
        \[\lim_{H\rightarrow\infty}\frac{1}{H}\sum_{h=1}^H \langle S^h_{\mathcal{U}}x_{\mathcal{U}},g^\prime\rangle_\mathcal{U}=0\]
        holds for all $g^\prime\in A_\mathcal{U}(E)^\prime$. 
        \item \textbf{almost weakly mixing} if for any $\mathcal{U} \in \beta\mathbb{N}^*$,
        \[\lim_{H\rightarrow\infty}\frac{1}{H}\sum_{h=1}^H |\langle S^h_{\mathcal{U}}x_{\mathcal{U}},g^\prime\rangle_\mathcal{U}|=0\]
        holds for all $g^\prime\in A_\mathcal{U}(E)^\prime$. 
        \item \textbf{almost mildly mixing} if for any $\mathcal{U} \in \beta\mathbb{N}^*$, 
               \[ \mathcal{IP}^*-\lim_{h\rightarrow\infty}\langle S^h_{\mathcal{U}}x_{\mathcal{U}},g^\prime\rangle_\mathcal{U} = 0.\]
        holds for all $g^\prime\in A_\mathcal{U}(E)^\prime$.
        \item \textbf{almost strongly mixing} if for any $\mathcal{U} \in \beta\mathbb{N}^*$, \[ \lim_{h\rightarrow\infty}\langle S^h_{\mathcal{U}}x_{\mathcal{U}},g^\prime\rangle_\mathcal{U} = 0.\]
holds for all $g^\prime\in A_\mathcal{U}(E)^\prime$.
    \end{enumerate}
\end{definition}

\begin{lemma}\label{LemmaUsingNiceness}
    Let $E$ be a Banach space, let $(X,\mathscr{B},\mu)$ be a $\sigma$-finite measure space, and let $T:L^1(X,\mu;E)\rightarrow L^1(X,\mu;E)$ be a bounded linear paCb operator. If $f \in L^1(X,\mu;E)$, then for a.e. $x \in X$ we have $(T^nf(x))_{n = 1}^\infty \in \text{ces}_\infty(E)$. Furthermore, if $T$ is spaCb, then for a.e. $x \in X$ we have $(T^nf(x))_{n = 1}^\infty \in A(E)$. 
\end{lemma}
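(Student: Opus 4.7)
The first assertion is essentially a bookkeeping exercise. By the paCb hypothesis, there is a null set $N_0$ outside of which $\limsup_{N\to\infty}\frac{1}{N}\sum_{n=1}^N\|T^nf(x)\| \le C\|f\|_1$. Separately, since each $T^nf \in L^1(X,\mu;E)$, the set $\{x : \|T^nf(x)\| = \infty\}$ is null for each $n$; let $N_1$ be the countable union of these null sets. For every $x \notin N_0 \cup N_1$ the Cesàro averages $\frac{1}{N}\sum_{n=1}^N\|T^nf(x)\|$ are finite for every $N$ and eventually bounded, so the supremum over $N$ is finite, placing $(T^nf(x))_{n=1}^\infty$ in $\text{ces}_\infty(E)$.

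For the second assertion, the plan is to produce, for every $\epsilon > 0$, a bounded sequence in $E$ that Cesàro-approximates $(T^nf(x))_n$ to within $\epsilon$ for a.e. $x$. First I would fix, for each $k \in \mathbb{N}$, a truncation $g_k := f \mathbbm{1}_{\{\|f\|\le M_k\}} \in L^1(X,\mu;E)\cap L^\infty(X,\mu;E)$ with $M_k$ chosen so that $\|f - g_k\|_1 < \frac{1}{Ck}$; dominated convergence guarantees that such $M_k$ exists. Using the spaCb hypothesis on $g_k$, there is a null set outside which $\sup_{n \in \mathbb{N}}\|T^ng_k(x)\| \le C\|g_k\|_\infty < \infty$, so the sequence $e_n^{(k)} := T^ng_k(x)$ is bounded in $E$. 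Using the paCb hypothesis applied to $f - g_k$, there is another null set outside which
\begin{equation*}
\limsup_{N\to\infty}\frac{1}{N}\sum_{n = 1}^N\|T^nf(x) - e_n^{(k)}\| = \limsup_{N\to\infty}\frac{1}{N}\sum_{n = 1}^N\|T^n(f - g_k)(x)\| \le C\|f - g_k\|_1 < \frac{1}{k},
\end{equation*}
where I have used linearity of $T$ in the first equality.

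Taking the (countable) union over $k$ of all the exceptional null sets above, together with $N_0 \cup N_1$, yields a single null set outside of which every $k$ admits a bounded witness sequence $(e_n^{(k)})_n$ of the required kind. By the defining description of $A(E)$ recalled in the excerpt, this places $(T^nf(x))_{n=1}^\infty$ in $A(E)$ for a.e. $x \in X$. There is no real obstacle in this proof; the only point demanding care is that the approximating sequence $(e_n^{(k)})_n$ is allowed to depend on both $k$ and $x$, which is exactly what the definition of $A(E)$ permits.
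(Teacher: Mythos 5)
Your proof is correct and follows essentially the same route as the paper: truncate $f$ to a bounded function, use the $L^\infty$ half of the spaCb property to get a bounded witness sequence, and use the paCb bound on the difference to control the Ces\`aro error. Your explicit handling of the countable union of exceptional null sets over $k$ and $n$ is a slightly more careful rendering of the same argument.
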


\begin{proof}
    The first claim follows immediately from the definition of paCb. To see the second claim, let $\epsilon > 0$ be arbitrary and let $M_\epsilon \in \mathbb{N}$ be such that for $f_\epsilon := f\mathbbm{1}_{f < M_\epsilon} \in L^\infty(X,\mu;E)\cap L^1(X,\mu;E)$ we have $\|f-f_\epsilon\|_1 < \epsilon$. Since $T$ is paCb, we observe for a.e. $x \in X$ that
    \begin{equation*}
        \limsup_{N\rightarrow\infty}\frac{1}{N}\sum_{n = 1}^N\|T^nf(x)-T^nf_\epsilon(x)\| = \limsup_{N\rightarrow\infty}\frac{1}{N}\sum_{n = 1}^N\|T^n(f-f_{\epsilon})(x)\| \le C\int_X\|f-f_{\epsilon}\|d\mu < C\epsilon.
    \end{equation*}
    Since $T$ is spaCb, we see for a.e. $x\in X$ that $(T^nf_{\epsilon}(x))_{n = 1}^\infty$ is a bounded sequence.
\end{proof}

\subsection{Polynomial pointwise theorems}

We begin by recalling the uniform polynomial Wiener-Wintner Theorem of Lesigne.

\begin{theorem}[{Lesigne, \cite{PolynomialWW}}]\label{LesignePolynomialTheorem}
    Let $(X,\mathscr{B},\mu,\varphi)$ be a weakly mixing measure preserving system and let $\mathbb{R}_k[x]$ denote the collection of polynomials in $\mathbb{R}[x]$ of degree at most $k$. For any $f \in L^1(X,\mu)$ with $\int_Xfd\mu = 0$, and continuous $\phi:\mathbb{T}\rightarrow\mathbb{R}$, and for a.e. $x \in X$ we have

    \begin{equation}
        \lim_{N\rightarrow\infty}\sup_{p \in \mathbb{R}_k[x]}\left|\frac{1}{N}\sum_{n = 1}^Nf(\varphi^nx)\phi(p(nx))\right| = 0.
    \end{equation}
\end{theorem}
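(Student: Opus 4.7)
My plan is to proceed by induction on the degree $k$, combining van der Corput's inequality with Bourgain's uniform Wiener--Wintner theorem (Theorem \ref{UWWT}) as the base case $k=1$. Before beginning the induction I would make three preliminary reductions. First, by Stone--Weierstrass the continuous $\phi:\mathbb{T}\to\mathbb{R}$ can be uniformly approximated by trigonometric polynomials, so by linearity it suffices to treat $\phi(t)=e(mt)$; the case $m=0$ follows from Birkhoff's theorem since $\int f\,d\mu=0$ and weak mixing implies ergodicity, and for $m\neq 0$ one absorbs $m$ into $p$ to reduce to $\phi(t)=e(t)$. Second, although $\mathbb{R}_k[x]$ is noncompact, $e(p(n))$ with $n\in\mathbb{N}$ depends on the coefficients of $p$ only modulo $\mathbb{Z}$, so the supremum is effectively taken over the compact torus $\mathbb{T}^{k+1}$, which is what will let me pass from a.e.\ convergence at each $p$ in a countable dense subset to a uniform statement. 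Third, by a standard $L^\infty$-truncation and approximation I may assume $f\in L^\infty$, so that products of shifts of $f$ remain integrable. Theorem \ref{koopmanism}(ii) then guarantees that $f$ is weakly mixing with respect to $T_\varphi$, giving the base case $k=1$ directly from Theorem \ref{UWWT}.

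For the inductive step, assuming the statement for degrees $\leq k-1$, I set $g_h(x):=T_\varphi^h f(x)\,\overline{f(x)}$ and $c_h:=\int g_h\,d\mu=\langle T_\varphi^h f, f\rangle$, so that weak mixing of $f$ yields $\frac{1}{H}\sum_{h=1}^H|c_h|\to 0$. Van der Corput's inequality then gives, for each $p\in\mathbb{R}_k[x]$, each $x$, and each $H\le N$,
\[
\Bigl|\frac{1}{N}\sum_{n=1}^N f(\varphi^n x)\,e(p(n))\Bigr|^2 \lesssim \frac{1}{H}\sum_{h=1}^H \Bigl|\frac{1}{N}\sum_{n=1}^N g_h(\varphi^n x)\,e(q_{p,h}(n))\Bigr| + O\!\bigl(H/N\bigr),
\]
where $q_{p,h}(y):=p(y+h)-p(y)$ has degree at most $k-1$. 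Writing $g_h=(g_h-c_h)+c_h$, the mean-zero part $g_h-c_h$ is controlled for each $h$ by the inductive hypothesis, which produces a full-measure set $X_h$ on which $\sup_{q\in\mathbb{R}_{k-1}[x]}|N^{-1}\sum_n (g_h-c_h)(\varphi^n x)e(q(n))|\to 0$; intersecting over the countable family $h\in\mathbb{N}$ gives a single full-measure set. The $c_h$ part contributes at most $H^{-1}\sum_h|c_h|$ since the trivial bound $|N^{-1}\sum_n e(q_{p,h}(n))|\le 1$ applies, and this vanishes as $H\to\infty$. Taking first $N\to\infty$ then $H\to\infty$ and finally shrinking $\epsilon$ delivers the uniform-in-$p$ bound.

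The main obstacle I anticipate is coordinating the pointwise a.e.\ convergence (which fixes the base function $g_h$ before choosing the null set) with the supremum over the noncompact space $\mathbb{R}_k[x]$. The compactness reduction to $\mathbb{T}^{k+1}$ together with the equicontinuity of the averages in the coefficients of $p$ is what allows a single exceptional null set to serve for all $p$ simultaneously; and since $q_{p,h}$ ranges over all degree-$(k-1)$ polynomials as $p$ varies, the uniformity-in-$p$ conclusion follows cleanly from the uniformity-in-$q$ inductive hypothesis. A secondary subtlety is the $L^\infty$-truncation step: one must verify that the error introduced by replacing $f$ with a bounded approximant is controlled by the maximal inequality underlying Birkhoff's theorem, which is standard. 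Outside of these points, the argument is a fairly mechanical induction.
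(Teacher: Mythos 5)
The paper does not actually prove Theorem \ref{LesignePolynomialTheorem}: it is imported from Lesigne \cite{PolynomialWW} as background for Section 2.6, so there is no in-paper argument to compare against. Judged on its own, your outline is the classical proof of this result and is essentially correct: reduce to $\phi=e(\cdot)$ by Stone--Weierstrass and to bounded $f$ by truncation against the Birkhoff maximal function, then induct on the degree via van der Corput, splitting $g_h=T_\varphi^hf\cdot\overline{f}$ into its mean $c_h=\langle T_\varphi^hf,f\rangle$ (whose Ces\`aro averages of absolute values vanish by weak mixing of $f$) and the mean-zero part $g_h-c_h$, to which the inductive hypothesis applies precisely because the \emph{system}, and not merely $f$, is weakly mixing (Theorem \ref{koopmanism}(ii)); this is the one place the hypothesis on the whole system is indispensable, and you identify it correctly. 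Two minor remarks. First, the noncompactness of $\mathbb{R}_k[x]$ is a red herring in this argument: after van der Corput the majorant $\frac{1}{H}\sum_{h\le H}\sup_{q\in\mathbb{R}_{k-1}[x]}\bigl|\frac{1}{N}\sum_{n\le N}(g_h-c_h)(\varphi^nx)e(q(n))\bigr|+\frac{1}{H}\sum_{h\le H}|c_h|+O(H/N)$ does not depend on $p$ at all, so uniformity in $p$ is inherited directly from the uniform inductive hypothesis and no dense-subset or equicontinuity step is needed; relatedly, Theorem \ref{UWWT} need not be invoked as a base case, since the degree-zero case is Birkhoff's theorem and the $k=1$ case already drops out of the same van der Corput step (indeed this argument reproves the weakly mixing case of Theorem \ref{UWWT}). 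Second, the expression $\phi(p(nx))$ in the statement should be read as $\phi(p(n))$, which is the form in Lesigne's paper and the reading you adopt. With the standard bookkeeping (the diagonal term $\frac{1}{H}\cdot\frac{1}{N}\sum_n|f(\varphi^nx)|^2$ is $O(1/H)$ for bounded $f$, and the exceptional null set is a countable intersection over $h$ and over truncation levels), the argument goes through as you describe.
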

We now given an example showing that pointwise ergodic theorems involving polynomial weights require more than just the mixing properties and the spaCb property. Let $\alpha \in \mathbb{R}\setminus\mathbb{Q}$ be arbitrary and recall that in Example \ref{ExamplesOfOperators}\eqref{BadForPolynomialsExample} we showed the operator $U_\alpha:L^1([0,1],m)\rightarrow L^1([0,1],m)$ given by $(U_\alpha f)(x) = e(x)f(x+\alpha)$ is a strongly mixing spaCb operator. We see that for $f(x) = e(x)$, we have 

\begin{equation}
    U_\alpha^nf(x) = e\left(\binom{n+1}{2}\alpha\right)e((n+1)x).
\end{equation}
For each $x \in [0,1]$, let $p_x(y) \in \mathbb{R}[y]$ be given by

\begin{equation}
    p_x(y) = -\frac{(y+1)y}{2}\alpha-(y+1)x\text{ and note that }\lim_{N\rightarrow\infty}\frac{1}{N}\sum_{n = 1}^NU_\alpha^nf(x)e(p_x(n)) = 1.
\end{equation}

The key property possessed by Koopman operators that is not possessed by $U_\alpha$, is that Koopman operators are an algebra automorphisms of $L^\infty(X,\mu)$. If $E$ is a Banach algebra, let $I(E)$ denote the collection of algebra automorhpism of $E$ that are also isometries. We see that if $F:X\rightarrow I(E)$ is strongly measurable with respect to the Borel $\sigma$-algebra of the strong operator topology and $\varphi:X\rightarrow X$ is measure preserving, then the operator $S$ given by $(Sf)(x) = F(x)(f(\varphi x))$ is an algebra automorphism of $L^\infty(X,\mu;E)$. It is natural to ask whether or not there is an analogue of Lesigne's Theorem for weakly mixing operators having the form of $S$, or if there is an analogue in the non-commutative setting (cf. \cite{NoncommutativeMultiParameterWWTheorem}), but we do not pursue this here.

\section{Main results}
The following lemma establishes the connection between our given operator $T$ on $L^1(X,\mu;E)$ and the shift operator $S_\mathcal{U}$ on $A_\mathcal{U}(E)$.

\begin{lemma}\label{keylemma}
    Let $E$ be a Banach space, let $T:L^1(X,\mu;E)\rightarrow L^1(X,\mu;E)$ be a bounded linear spaCb operator, and let $f \in L^1(X,\mu,E)$. There exists a full measure set $X_f \subseteq X$ such that for any $x \in X_f$, any $\mathcal{U} \in \beta\mathbb{N}^*$, and any $G^\prime \in A_\mathcal{U}(E)^\prime$, there exists $g^\prime \in L^1(X,\mu;E)^\prime$ for which

    \begin{equation}
        \langle T^hf, g^\prime\rangle_{L^1_E} = \langle [(T^{n+h}f(x))_{n = 1}^\infty]_\mathcal{U},G^\prime\rangle_\mathcal{U} = \langle S_\mathcal{U}^h[(T^nf(x))_{n = 1}^\infty]_\mathcal{U},G^\prime\rangle_\mathcal{U}\text{ for all }h \ge 0.
    \end{equation}
\end{lemma}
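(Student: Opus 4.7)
My strategy is to realize $g^\prime$ as a Hahn-Banach extension of a linear functional $L$ on the closed cyclic subspace $\overline{V_0}$ generated by $\{T^h f : h \ge 0\}$ in $L^1(X,\mu;E)$, where on the countable $(\mathbb{Q}+i\mathbb{Q})$-linear span $V_0$ the value of $L$ is forced by the desired identity:
\[
L\Big(\sum_j c_j T^{h_j} f\Big) := \Big\langle \sum_j c_j S_\mathcal{U}^{h_j}\bigl[(T^n f(x))_{n=1}^\infty\bigr]_\mathcal{U},\, G^\prime\Big\rangle_\mathcal{U}.
\]
Once $L$ is shown to be well-defined and $L^1$-bounded, extension by continuity to $\overline{V_0}$ and then by Hahn-Banach to all of $L^1(X,\mu;E)$ produces $g^\prime$; the single-term representative of $T^h f \in V_0$ gives the first equality of the statement, and the second equality is just the defining formula for $S_\mathcal{U}$.

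To build $X_f$, I would intersect the following countable families of full-measure sets: (i) the full-measure set from Lemma~\ref{LemmaUsingNiceness} on which $(T^n f(x))_{n=1}^\infty \in A(E)$, so that $\xi_x := [(T^n f(x))_{n=1}^\infty]_\mathcal{U} \in A_\mathcal{U}(E)$ is defined for every $\mathcal{U}$; (ii) for each formal representative $\tilde g_0 = \sum_j c_j T^{h_j} f$ satisfying $[\tilde g_0] = 0$ in $L^1$ and each $n \in \mathbb{N}_0$, the set $\{x : \sum_j c_j T^{n+h_j} f(x) = 0\}$; (iii) for each $\tilde g = \sum_j c_j T^{h_j} f \in V_0$, the set where the paCb bound $\limsup_N \tfrac{1}{N}\sum_{n=1}^N \bigl\|\sum_j c_j T^{n+h_j} f(x)\bigr\| \le C\|\tilde g\|_1$ holds (this is exactly paCb applied to the $L^1$-element $[\tilde g]$, since the pointwise sum is a representative of $T^n[\tilde g]$). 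Since $V_0$ is countable, each item contributes countably many $\mu$-null sets, and $X_f$ has full measure.

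For $x \in X_f$ fixed and for arbitrary $\mathcal{U}$ and $G^\prime$, well-definedness of $L$ on $V_0$ follows from (ii): any two formal representatives of the same $L^1$-element differ by a representative of $0$, and (ii) forces their formal shifts $\sum_j c_j T^{n+h_j} f(x)$ to agree pointwise for every $n$, so the elements $\sum_j c_j S_\mathcal{U}^{h_j} \xi_x \in A_\mathcal{U}(E)$ coincide. Boundedness follows from
\[
\Big\|\sum_j c_j S_\mathcal{U}^{h_j}\xi_x\Big\|_\mathcal{U} = \mathcal{U}\text{-}\lim_{N\to\infty} \tfrac{1}{N}\sum_{n=1}^N \Big\|\sum_j c_j T^{n+h_j} f(x)\Big\| \le C\|\tilde g\|_1,
\]
where the first equality is the definition of the $A_\mathcal{U}(E)$-norm applied to the shifted sum and the inequality comes from (iii) together with $\mathcal{U}\text{-}\lim \le \limsup$. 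This yields $\|L\| \le C\|G^\prime\|_\mathcal{U}$ on $V_0$ equipped with the $L^1$-norm. The main technical obstacle is genuinely just the bookkeeping to isolate a single $X_f$ accommodating all these countable algebraic and analytic conditions simultaneously and independently of $\mathcal{U}$ and $G^\prime$; the functional-analytic steps that follow (continuous extension and Hahn-Banach) are then routine.
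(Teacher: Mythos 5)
Your construction is correct, and it rests on the same underlying idea as the paper's proof --- the desired $g^\prime$ is the pullback of $G^\prime$ along the orbit map $u \mapsto (T^n u(x))_{n=1}^\infty$ --- but you execute it along a genuinely different route. The paper defines $R(T,x)\colon L^1(X,\mu;E)\rightarrow A(E)$, $u\mapsto (T^nu(x))_{n\in\mathbb{N}}$, asserts via Lemma~\ref{LemmaUsingNiceness} that for a.e.\ $x$ this is a well-defined bounded operator on \emph{all} of $L^1(X,\mu;E)$, and simply sets $g^\prime = G^\prime\circ q\circ i\circ R(T,x)$; no Hahn--Banach step is needed and one formula gives $g^\prime$ on the whole space. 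You instead define the functional only on the countable $(\mathbb{Q}+i\mathbb{Q})$-span $V_0$ of $\{T^hf\}_{h\ge 0}$, verify well-definedness and the bound $|L(\tilde g)|\le C\|G^\prime\|\,\|\tilde g\|_1$ there by applying paCb to each of the countably many $\tilde g\in V_0$, and then extend by continuity and Hahn--Banach. What your route buys is that every exceptional null set is indexed by the countable family $V_0\times\mathbb{N}_0$, so $X_f$ is transparently of full measure and independent of $\mathcal{U}$ and $G^\prime$; by contrast, the paper's route implicitly needs a single full-measure set of $x$ to serve uncountably many $u\in L^1(X,\mu;E)$ at once, even though the null set produced by Lemma~\ref{LemmaUsingNiceness} a priori depends on $u$ --- a point the paper does not spell out and which your localized construction sidesteps entirely. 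What the paper's route buys is brevity and a canonical $g^\prime$ not tied to the cyclic subspace of $f$; for the statement at hand, which only concerns the orbit of the fixed $f$, your version suffices. Two cosmetic remarks: the operator-norm bound should read $\|L\|\le C\|G^\prime\|$ rather than $C\|G^\prime\|_{\mathcal{U}}$, and in your step (ii) pointwise vanishing of the shifted sums for every $n$ is more than is needed (vanishing of the class in $A_{\mathcal{U}}(E)$ would do), but it is certainly sufficient.
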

\begin{proof}
We look at the map
\[R(T, x) : L^1(X, \mu; E) \rightarrow A(E), u \mapsto (T^n u(x))_{n\in\mathbb{N}}.\]
Since $T$ is spaCb, Lemma \ref{LemmaUsingNiceness} shows that $R(T, x)$ is for a.e.\ $x \in X$ a well-defined, bounded linear operator between Banach spaces. For a given $G^\prime \in A_\mathcal{U}(E)^\prime$ we define
\[g_{G^\prime,T,x}(
u
) := (G^\prime \circ q \circ i \circ R(T,x))(u),\]
where
\[i : A(E) \rightarrow \ell^\infty(E^n), (x_n)_{n\in \mathbb{N}} \mapsto ((x_n(k))_{k = 1}^n)_{n\in \mathbb{N}},\]
with $x_n(k) := x_k$ and $q$ is the canonical projection on the quotient space $i(A(E))/(i(A(E))\cap
N_{\mathcal{U}})$. Since the image of $R(T, x)$ is in $A(E)$ the map $q \circ i \circ R(T, x)$ is for a.e.\ $x \in X$ a well-defined map between $L^1(X, \mu; E)$ and $A_{\mathcal{U}}(E)$. Since the considered maps are linear and
continuous for a.e.\ $x \in X$, it suffices to take $g^\prime = g_{G^\prime,T,x}$.
\end{proof}

\begin{theorem}\label{firstapplicationkeylemma}
    Let $T$ be bounded linear spaCb operator on $L^1(X,\mu;E)$ and let $f \in L^1(X,\mu;E)$. 
    \begin{enumerate}[(i)]
        \item If $f$ is ergodic with respect to $T$, then the sequence $(T^nf(x))_{n = 1}^\infty$ is fully ergodic for almost every $x\in X$.

        \item If $f$ is weakly mixing with respect to $T$, then $(T^nf(x))_{n = 1}^\infty$ is almost weakly mixing for almost every $x\in X$.

        \item If $f$ is mildly mixing with respect to $T$, then $(T^nf(x))_{n = 1}^\infty$ is almost mildly mixing for almost every $x\in X$.

        \item If $f$ is strongly mixing with respect to $T$, then $(T^nf(x))_{n = 1}^\infty$ is almost strongly mixing for almost every $x\in X$.
    \end{enumerate}
\end{theorem}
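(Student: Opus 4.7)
The plan is to use Lemma \ref{keylemma} as a transfer principle: mixing properties of $f$ with respect to $T$ (which are expressed as Cesàro/IP*/strong limits of dual pairings $\langle T^h f, g^\prime\rangle$) can be converted into the corresponding mixing properties of the ultraproduct vector $x_\mathcal{U} := [(T^n f(x))_{n=1}^\infty]_\mathcal{U}$ with respect to the shift $S_\mathcal{U}$, simply by substituting the equality
\[
\langle S_\mathcal{U}^h x_\mathcal{U}, G^\prime\rangle_\mathcal{U} = \langle T^h f, g^\prime\rangle_{L^1_E}
\]
provided by the lemma. Fix the full measure set $X_f \subseteq X$ supplied by Lemma \ref{keylemma}; it depends only on $f$ and $T$, not on the particular mixing property under consideration, so a single set $X_f$ works for all four parts.

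Fix $x \in X_f$ and an arbitrary $\mathcal{U} \in \beta\mathbb{N}^*$. For (i), let $G^\prime \in A_\mathcal{U}(E)^\prime$ and pull back via Lemma \ref{keylemma} to obtain $g^\prime \in L^1(X,\mu;E)^\prime$ with $\langle S_\mathcal{U}^h x_\mathcal{U}, G^\prime\rangle_\mathcal{U} = \langle T^h f, g^\prime\rangle_{L^1_E}$ for every $h \geq 0$. Averaging in $h$ and using that $f$ is ergodic with respect to $T$ yields
\[
\lim_{H\rightarrow\infty}\frac{1}{H}\sum_{h=1}^H \langle S_\mathcal{U}^h x_\mathcal{U}, G^\prime\rangle_\mathcal{U} = \lim_{H\rightarrow\infty}\frac{1}{H}\sum_{h=1}^H \langle T^h f, g^\prime\rangle_{L^1_E} = 0,
\]
so $(T^n f(x))_{n=1}^\infty$ is fully ergodic. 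Parts (ii), (iii), and (iv) are completely analogous: for (ii) one takes absolute values inside the Cesàro average and invokes weak mixing of $f$; for (iii) one passes to an $\mathcal{IP}^*$-limit and uses that $f$ is mildly mixing; for (iv) one takes the ordinary limit in $h$ and uses that $f$ is strongly mixing. In each case the identity from Lemma \ref{keylemma} converts the condition on $x_\mathcal{U}$ with respect to $S_\mathcal{U}$ directly into the corresponding condition on $f$ with respect to $T$.

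There is no real obstacle beyond the setup that has already been done. The only slightly delicate point is to note that the full measure set $X_f$ produced by Lemma \ref{keylemma} is independent of $\mathcal{U}$ and $G^\prime$ (this is built into the statement of the lemma), so the quantifier order in the definitions of fully ergodic/almost weakly mixing/almost mildly mixing/almost strongly mixing (Definition \ref{DefinitionOfMixingSequences}) is respected: for a.e. $x$, the conclusion holds uniformly across all nonprincipal ultrafilters and all functionals on $A_\mathcal{U}(E)$. Everything else reduces to substituting the identity from Lemma \ref{keylemma} into the four definitions and reading off the conclusion.
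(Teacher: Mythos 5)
Your proof is correct and follows essentially the same route as the paper: both apply Lemma \ref{keylemma} to pull each functional $G^\prime \in A_\mathcal{U}(E)^\prime$ back to some $g^\prime \in L^1(X,\mu;E)^\prime$ and then read the four mixing conditions off the resulting identity. Your explicit remark that the full-measure set $X_f$ from Lemma \ref{keylemma} is independent of $\mathcal{U}$ and $G^\prime$ is a point the paper leaves implicit, but it is built into the lemma's statement, so there is no gap.
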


\begin{proof}
Since $T$ is spaCb, Lemma \ref{LemmaUsingNiceness} tells us that $(T^nf(x))_{n = 1}^\infty \in A(E)$. Let $\mathcal{U} \in \beta\mathbb{N}^*$ and  $G^\prime \in A_{\mathcal{U}}(E)^\prime$. Since $T$ is spaCb, by Lemma \ref{keylemma} there exists $g^\prime \in L^1(X,\mu;E)^\prime$ for which
    \begin{equation}
        \langle T^hf, g^\prime\rangle_{L^1_E} = \langle S_\mathcal{U}^h[(T^nf(x))_{n = 1}^\infty]_\mathcal{U},G^\prime\rangle_{\mathcal{U}}
    \end{equation}     
    holds for all $h\in\mathbb{N}$. Since $f$ is ergodic 
\begin{alignat*}{2}
0&=\lim_{H\rightarrow \infty}\frac{1}{H}\sum_{h=1}^H\langle T^h f,g^\prime\rangle_{L_E^1} = \lim_{H\rightarrow \infty}\frac{1}{H}\sum_{h=1}^H\langle S_\mathcal{U}^h[(T^nf(x))_{n = 1}^\infty]_\mathcal{U},G^\prime\rangle_\mathcal{U}
\end{alignat*}
follows, hence $(T^nf(x))_{n = 1}^\infty$ is a fully ergodic sequence. Statements (ii)-(iv) follow similarly.  \end{proof}
\begin{corollary}\label{corollarmixingkoopman}
Let $(X,\mathscr{B},\mu,\varphi)$ be a measure preserving system, $T_\varphi$ be the Koopman operator on $L^1(X,\mu;E)$ and $f\in L^1(X,\mu;E)$ with $\int_X f d\mu =0$. If $(X,\mathscr{B},\mu,\varphi)$ is ergodic / weakly mixing / mildly mixing / strongly mixing, then for almost every $x\in X$ the sequence $(T_\varphi^nf(x))_{n = 1}^\infty$ is fully ergodic /  almost weakly mixing /  almost mildly mixing /  almost strongly mixing.
    
\end{corollary}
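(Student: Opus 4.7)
The plan is to reduce the corollary to a direct combination of Theorem \ref{firstapplicationkeylemma} and Theorem \ref{meanergodic}, with the only preliminary step being to verify that the Koopman operator falls within the class of operators covered by Theorem \ref{firstapplicationkeylemma}.

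First I would verify that $T_\varphi : L^1(X,\mu;E) \to L^1(X,\mu;E)$ is spaCb whenever $(X,\mathscr{B},\mu,\varphi)$ is ergodic. For any $f \in L^1(X,\mu;E)$, the scalar function $\|f(\cdot)\|$ lies in $L^1(X,\mu)$, so Birkhoff's pointwise ergodic theorem applied to $\|f\|$ gives
\begin{equation*}
\lim_{N\rightarrow\infty}\frac{1}{N}\sum_{n=1}^{N}\|T_\varphi^n f(x)\| \;=\; \lim_{N\rightarrow\infty}\frac{1}{N}\sum_{n=1}^{N}\|f(\varphi^n x)\| \;=\; \int_X \|f\|\,d\mu
\end{equation*}
for a.e.\ $x \in X$, which establishes the paCb property with constant $C = 1$. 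Combined with the obvious identity $\|T_\varphi^n g\|_\infty = \|g\|_\infty$ for $g \in L^1(X,\mu;E)\cap L^\infty(X,\mu;E)$, this gives the spaCb property. This is essentially the content of Examples \ref{ExamplesOfOperators}(i), lifted verbatim to the Bochner setting.

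Next, under the hypothesis that $(X,\mathscr{B},\mu,\varphi)$ is ergodic, weakly mixing, mildly mixing, or strongly mixing, together with $\int_X f\,d\mu = 0$, Theorem \ref{meanergodic} tells us that $f$ is ergodic, weakly mixing, mildly mixing, or strongly mixing (respectively) with respect to $T_\varphi$ in the sense of the corresponding definition from Section 2.3. Having both the spaCb property of $T_\varphi$ and the corresponding mixing property of $f$ in hand, Theorem \ref{firstapplicationkeylemma} immediately yields the desired conclusion: for a.e.\ $x \in X$, the pointwise orbit sequence $(T_\varphi^n f(x))_{n=1}^\infty$ lies in $A(E)$ and is fully ergodic, almost weakly mixing, almost mildly mixing, or almost strongly mixing respectively.

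There is no genuine obstacle in this argument; it is a bookkeeping exercise assembling results already in the paper. The only subtle point worth flagging is that the verification of the spaCb property for $T_\varphi$ genuinely uses ergodicity of $\varphi$ (not merely the measure preserving property), since in the non-ergodic case Birkhoff's limit $(\|f\|)^*$ need not be dominated by $C\|f\|_1$; but all four hypotheses of the corollary include ergodicity as their weakest condition, so this is not a restriction.
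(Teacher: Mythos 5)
Your proposal is correct and follows the same route as the paper, which deduces the corollary directly from Theorem \ref{meanergodic} (to transfer the mixing property of the system to the function $f$) and Theorem \ref{firstapplicationkeylemma} (to transfer it to the pointwise orbit), with the spaCb property of the ergodic Koopman operator taken from Examples \ref{ExamplesOfOperators}(i). Your explicit verification of that spaCb property via Birkhoff applied to $\|f(\cdot)\|$, and your observation that ergodicity is genuinely needed there but is implied by all four hypotheses, are exactly the details the paper leaves implicit.
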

    \noindent Corollary \ref{corollarmixingkoopman} follows directly from Theorem \ref{meanergodic} and Theorem \ref{firstapplicationkeylemma}. 

\begin{theorem}\label{UniformConvergenceForCESequences}
    Let $E$ be a Banach space, let $(\delta_n)_{n = 1}^\infty \subseteq \mathbb{R}^+$ satisfy $\lim_{n\rightarrow\infty}\delta_n = 0$, and let $\vec{e} = (e_n)_{n = 1}^\infty \in A(E)$ be fully ergodic. We have that

    \begin{alignat*}{2}
        &\lim_{N\rightarrow\infty}\sup_{(c_n)_{n = 1}^N \in \mathcal{I}(N,\delta_N)}\left|\left|\frac{1}{N}\sum_{n = 1}^Ne_nc_n\right|\right| = 0\text{, where }\\
        &\mathcal{I}(N,\delta_N) := \left\{(c_n)_{n = 1}^N \in D_1^N\ |\ \frac{1}{N}\sum_{n = 1}^{N-1}|c_n-c_{n+1}| < \delta_N\right\}.
    \end{alignat*}
\end{theorem}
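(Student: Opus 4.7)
The plan is to argue by contradiction, in the spirit of the strategy sketched in the introduction for the uniform Wiener--Wintner theorem. Suppose the conclusion fails; then there exist $\epsilon > 0$, an increasing sequence $N_k \to \infty$, and coefficient sequences $c^{(k)} := (c_n^{(k)})_{n=1}^{N_k} \in \mathcal{I}(N_k,\delta_{N_k})$ for which
\[\left\|\frac{1}{N_k}\sum_{n=1}^{N_k} e_n c_n^{(k)}\right\| > \epsilon.\]
By Hahn--Banach, pick $\eta_k^\prime \in E^\prime$ with $\|\eta_k^\prime\| \le 1$ satisfying $\mathrm{Re}\,\frac{1}{N_k}\sum_n c_n^{(k)}\langle e_n,\eta_k^\prime\rangle > \epsilon$. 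Since $\vec{e} \in A(E)$, choose a bounded sequence $\tilde{\vec{e}} = (\tilde e_n)_n \subseteq E$ with $M := \sup_n \|\tilde e_n\| < \infty$ and $\|\vec{e} - \tilde{\vec{e}}\|_c < \epsilon/4$; the analogous inequality then holds for $\tilde{\vec{e}}$ with $3\epsilon/4$ in place of $\epsilon$.

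Fix a nonprincipal ultrafilter $\mathcal{U}$ containing $\{N_k : k \in \mathbb{N}\}$. Under the isometric duality between $(E^n, \frac{1}{n}\sum\|\cdot\|)$ and $((E^\prime)^n, \max\|\cdot\|)$ given by the pairing $\langle x, y\rangle = \frac{1}{n}\sum_k \langle x_k, y_k\rangle$, any bounded family $((y_j^{(n)})_{j \le n})_n$ induces an element of $(E^n)_\mathcal{U}^\prime$ of norm $\mathcal{U}-\lim_n \max_j \|y_j^{(n)}\|$. I define $G \in A_\mathcal{U}(E)^\prime$ as the restriction of the element given by $y_j^{(n)} := c_j^{(k)} \eta_k^\prime$ when $n = N_k$ and $y_j^{(n)} := 0$ otherwise, so that $\|G\| \le 1$ and, using $\{N_k\} \in \mathcal{U}$,
\[\langle \tilde e_\mathcal{U}, G\rangle_\mathcal{U} = \mathcal{U}-\lim_n \frac{1}{n}\sum_{j=1}^n \langle \tilde e_j, y_j^{(n)}\rangle,\]
which evaluated along $N_k$ gives $\mathrm{Re}\,\langle \tilde e_\mathcal{U}, G\rangle_\mathcal{U} \ge 3\epsilon/4$.

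The key step is to show that $\langle S_\mathcal{U}^h \tilde e_\mathcal{U}, G\rangle_\mathcal{U} = \langle \tilde e_\mathcal{U}, G\rangle_\mathcal{U}$ for every fixed $h \ge 1$. Writing the difference at level $n = N_k$ via the substitution $m = j+h$ produces two boundary terms of size $\le hM/N_k$ plus a variation term that the telescoping estimate $|c_{m-h}-c_m| \le \sum_{l=0}^{h-1}|c_{m-h+l+1}-c_{m-h+l}|$ bounds by
\[\frac{1}{N_k}\sum_m |c_{m-h}^{(k)}-c_m^{(k)}|\,\|\tilde e_m\|\,\|\eta_k^\prime\| \le \frac{Mh}{N_k}\sum_{n=1}^{N_k-1}|c_{n+1}^{(k)}-c_n^{(k)}| \le Mh\,\delta_{N_k}.\]
Since $N_k \to \infty$ and $\delta_{N_k} \to 0$ along $\mathcal{U}$, both error sources vanish under $\mathcal{U}-\lim_n$ for each fixed $h$, so $\frac{1}{H}\sum_{h=1}^H \langle S_\mathcal{U}^h \tilde e_\mathcal{U}, G\rangle_\mathcal{U} = \langle \tilde e_\mathcal{U}, G\rangle_\mathcal{U}$ for every $H \in \mathbb{N}$.

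On the other hand, full ergodicity of $\vec{e}$ yields $\lim_H \frac{1}{H}\sum_h \langle S_\mathcal{U}^h e_\mathcal{U}, G\rangle_\mathcal{U} = 0$, while the isometry property of $S_\mathcal{U}$ combined with $\|e_\mathcal{U} - \tilde e_\mathcal{U}\|_\mathcal{U} \le \|\vec{e} - \tilde{\vec{e}}\|_c < \epsilon/4$ shows that the corresponding Ces\`aro averages for $\tilde e_\mathcal{U}$ have $\limsup \le \epsilon/4$. This contradicts $\mathrm{Re}\,\langle \tilde e_\mathcal{U}, G\rangle_\mathcal{U} \ge 3\epsilon/4$. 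The main obstacle is that $\vec{e}$ itself need not be bounded as a sequence in $E$; this is why membership in $A(E)$ (rather than just $\text{ces}_\infty(E)$) is crucial, since the bounded approximation $\tilde{\vec{e}}$ is what keeps the shift-commutativity error proportional to $M\delta_{N_k}$ and allows us to transfer ergodicity approximately from $\vec{e}$ to $\tilde{\vec{e}}$.
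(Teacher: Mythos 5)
Your proposal is correct and follows essentially the same route as the paper: a contradiction argument that packages the counterexample data into a functional $G'$ on $A_\mathcal{U}(E)$, uses the small-variation condition together with a bounded approximant of $\vec{e}$ to get (approximate) shift-invariance of the pairing, and then contradicts full ergodicity. The only cosmetic difference is that the paper works with the truncations $e_n(M)$ and lets $M\to\infty$ to obtain exact shift-invariance of $G'(S_\mathcal{U}^h e_\mathcal{U})$, whereas you fix one bounded approximant and carry an $\epsilon/4$ error through the Ces\`aro averages; both are valid.
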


\begin{proof}
    We proceed by way of contradiction. Let us assume that there is some $\epsilon > 0$, and sequences $(N_m)_{m = 1}^\infty \subseteq \mathbb{N}$ and $((c_{n,m})_{n = 1}^{N_m})_{m = 1}^\infty$ for which

    \begin{equation}
        \xi_m := \frac{1}{N_m}\sum_{n = 1}^{N_m}e_nc_{n,m}\text{ satisfies }\|\xi_m\| \ge \epsilon.
    \end{equation}
    Let $\mathcal{U} \in \beta\mathbb{N}^*$ be such that $\{N_m\ |\ m\in\mathbb{N}\} \in \mathcal{U}$, and let $\mathcal{V} \in \beta\mathbb{N}^*$ be as in Lemma \ref{PushforwardOfUltrafilters}. Let $(f_m^\prime)_{m = 1}^\infty \subseteq E^\prime$ be such that $\|f_m^\prime\| = 1$ and $f_m^\prime(\xi_m) = \|\xi_m\|$. Let $g_N^\prime = f_m^\prime$ and $\gamma_{n,N} = c_{n,m}$ for $N_{m-1} < N \le N_m$. Let $G^\prime \in A_\mathcal{U}(E)^\prime$ be given by
    
    \begin{equation}
        G^\prime(x_\mathcal{U}) = \mathcal{U}-\lim_{N\rightarrow\infty}\frac{1}{N}\sum_{n = 1}^Ng_N^\prime(x_n\gamma_{n,N}) = \mathcal{V}-\lim_{m\rightarrow\infty}\frac{1}{N_m}\sum_{n = 1}^{N_m}f_m^\prime(x_nc_{n,m}).
    \end{equation}
    For $M \in \mathbb{N}$ let $e_n(M) = \frac{e_n}{\|e_n\|}\text{min}(\|e_n\|,M)$ and observe that

    \begin{alignat*}{2}
        &\left|G^\prime(S_{\mathcal{U}}e(M)_\mathcal{U})-G^\prime(e(M)_\mathcal{U})\right| = \left|\mathcal{V}-\lim_{m\rightarrow\infty}\frac{1}{N_m}\sum_{n = 1}^{N_m}f_m^\prime((e_{n+1}(M)-e_n(M))c_{n,m})\right|\\
        =& \left|\mathcal{V}-\lim_{m\rightarrow\infty}\frac{1}{N_m}\sum_{n = 2}^{N_m+1}f_m^\prime(e_n(M)(c_{n-1,m}-c_{n,m}))\right| \le \mathcal{V}-\lim_{m\rightarrow\infty}\frac{1}{N_m}\sum_{n = 2}^{N_m+1}M|c_{n-1}-c_n| = 0,\text{ hence}\\
        &G^\prime(S_{\mathcal{U}}e_\mathcal{U}) = \lim_{M\rightarrow\infty}G^\prime(S_{\mathcal{U}}e(M)_\mathcal{U}) = \lim_{M\rightarrow\infty}G^\prime(e(M)_\mathcal{U}) = G^\prime(e_\mathcal{U})\text{, thus}\\
        &G^\prime(e_\mathcal{U}) = \lim_{H\rightarrow\infty}\frac{1}{H}\sum_{h = 1}^HG^\prime(S_{\mathcal{U}}^he_\mathcal{U}) = 0\text{, but}\\
        &\left|G^\prime(e_\mathcal{U})\right| = \left|\mathcal{V}-\lim_{m\rightarrow\infty}\frac{1}{N_m}\sum_{n = 1}^{N_m}f_m^\prime(e_nc_{n,m})\right| = \left|\mathcal{V}-\lim_{m\rightarrow\infty}f_m^\prime(\xi_m)\right| = \mathcal{V}-\lim_{m\rightarrow\infty}\|\xi_m\| \ge \epsilon.
    \end{alignat*}
\end{proof}

\begin{theorem}\label{UniformConvergenceForNWMSequences}
    Let $E$ be a Banach space, let $(\delta_n)_{n = 1}^\infty \subseteq \mathbb{R}^+$ satisfy $\lim_{n\rightarrow\infty}\delta_n = 0$, and let $\vec{e} = (e_n)_{n = 1}^\infty \in A(E)$ be almost weakly mixing. We have that

    \begin{alignat*}{2}
        &\lim_{N\rightarrow\infty}\sup_{(c_n)_{n = 1}^N \in \mathcal{C}(N,\delta_N)}\left|\left|\frac{1}{N}\sum_{n = 1}^Ne_nc_n\right|\right| = 0\text{, where }\\
        &\mathcal{C}(N,\delta_N) := \left\{(c_n)_{n = 1}^N \in D_1^N\ |\ \exists\ \lambda \in \mathbb{S}^1\text{ such that }\frac{1}{N}\sum_{n = 1}^{N-1}|\lambda c_n-c_{n+1}| < \delta_N\right\}\\
        &\textcolor{white}{\mathcal{C}(N,\delta_N) :}=\left\{(\lambda^nc_n)_{n = 1}^N\ |\ \lambda \in \mathbb{S}^1\ \&\ (c_n)_{n = 1}^N \in \mathcal{I}(N,\delta_N)\right\}.
    \end{alignat*}
\end{theorem}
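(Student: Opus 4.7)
The plan is to follow the contradiction argument of Theorem~\ref{UniformConvergenceForCESequences}, modified so that the functional $G'$ on $A_\mathcal{U}(E)$ I build is an eigenvector of $S_\mathcal{U}^*$ with eigenvalue in $\mathbb{S}^1$ (rather than a fixed vector); the role played by ergodicity in that proof is then taken over by almost weak mixing, which kills eigenvector pairings via the Ces\`aro averages of absolute values.

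Assuming the conclusion fails, I obtain $\epsilon > 0$, an increasing sequence $(N_m)_{m = 1}^\infty$, scalars $\lambda_m \in \mathbb{S}^1$, and tuples $(c_{n,m})_{n = 1}^{N_m} \in D_1^{N_m}$ with $\frac{1}{N_m}\sum_{n = 1}^{N_m-1}|\lambda_m c_{n,m} - c_{n+1,m}| < \delta_{N_m}$ such that $\xi_m := \frac{1}{N_m}\sum_{n = 1}^{N_m}e_nc_{n,m}$ satisfies $\|\xi_m\| \ge \epsilon$. By compactness of $\mathbb{S}^1$, after passing to a subsequence I may assume $\lambda_m \to \lambda_\infty \in \mathbb{S}^1$. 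I then choose $\mathcal{U} \in \beta\mathbb{N}^*$ containing $\{N_m : m \in \mathbb{N}\}$, let $\mathcal{V}$ be the ultrafilter produced by Lemma~\ref{PushforwardOfUltrafilters}, and use Hahn--Banach to pick $f_m' \in E'$ with $\|f_m'\| = 1$ and $f_m'(\xi_m) = \|\xi_m\|$. In analogy with the previous proof I define
\[
G'(x_\mathcal{U}) := \mathcal{V}-\lim_{m \to \infty}\frac{1}{N_m}\sum_{n = 1}^{N_m}f_m'(x_n c_{n,m}),
\]
whose boundedness $|G'(x_\mathcal{U})| \le \|x_\mathcal{U}\|_\mathcal{U}$ (from $\|f_m'\| = 1$ and $|c_{n,m}| \le 1$) ensures both that $G'$ descends to the quotient and that $G' \in A_\mathcal{U}(E)'$.

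The central computation is to show $G' \circ S_\mathcal{U} = \lambda_\infty^{-1} G'$ on $A_\mathcal{U}(E)$. Using the natural analog $x_n(M) := \frac{x_n}{\|x_n\|}\min(\|x_n\|,M)$ of the truncation $e(M)$ from the previous proof, the difference $G'(S_\mathcal{U} x(M)_\mathcal{U}) - \lambda_\infty^{-1} G'(x(M)_\mathcal{U})$ telescopes, after reindexing $k = n+1$ and exchanging $\lambda_\infty^{-1}$ for $\lambda_m^{-1}$ inside the $\mathcal{V}$-limit (which is harmless because $\lambda_m \to \lambda_\infty$ and the averages are bounded), into boundary contributions of size $O(M/N_m)$ plus a main term bounded by
\[
\frac{M}{N_m}\sum_{n = 1}^{N_m-1}|\lambda_m c_{n,m} - c_{n+1,m}| \le M\delta_{N_m},
\]
both of which vanish along $\mathcal{V}$; sending $M \to \infty$ yields the eigenvector identity on all of $A_\mathcal{U}(E)$. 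Consequently $|\langle S_\mathcal{U}^h e_\mathcal{U}, G'\rangle_\mathcal{U}| = |\lambda_\infty^{-h}||G'(e_\mathcal{U})| = |G'(e_\mathcal{U})|$ for every $h \ge 0$, so almost weak mixing of $\vec{e}$ forces $G'(e_\mathcal{U}) = 0$. On the other hand, direct evaluation gives $G'(e_\mathcal{U}) = \mathcal{V}-\lim_m f_m'(\xi_m) = \mathcal{V}-\lim_m \|\xi_m\| \ge \epsilon$, the desired contradiction.

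I expect the main obstacle to be the summation-by-parts calculation producing the eigenvalue $\lambda_\infty^{-1}$: one must carry the $m$-dependent scalars $\lambda_m$ through the reindexing, exchange them for $\lambda_\infty$ under the $\mathcal{V}$-limit, and verify that the telescoping error is genuinely absorbed by the quantitative hypothesis $\frac{1}{N_m}\sum|\lambda_m c_{n,m} - c_{n+1,m}| < \delta_{N_m} \to 0$ applied to the bounded truncated orbit. Once this identity is in hand, the rest is a direct adaptation of the proof of Theorem~\ref{UniformConvergenceForCESequences}.
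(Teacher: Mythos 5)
Your proposal is correct and follows essentially the same route as the paper: the same contradiction setup, the same ultrafilters $\mathcal{U}$ and $\mathcal{V}$ from Lemma \ref{PushforwardOfUltrafilters}, the same norming functionals and truncation $x_n(M)$, and the same key step of exhibiting $G'$ as an eigenvector of $S_\mathcal{U}^*$ with unimodular eigenvalue so that almost weak mixing kills $|G'(e_\mathcal{U})|$. The only cosmetic differences are that you work with the first description of $\mathcal{C}(N,\delta_N)$ rather than the factored form $(\lambda^n c_n)$, and you extract a convergent subsequence of $(\lambda_m)$ where the paper takes $\lambda = \mathcal{V}\text{-}\lim_m \lambda_m^{-1}$ directly.
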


\begin{proof}
    We proceed by way of contradiction. Let us assume that there is some $\epsilon > 0$, and sequences $(N_m)_{m = 1}^\infty \subseteq \mathbb{N}$, $(\lambda_m)_{m = 1}^\infty \subseteq \mathbb{S}^1$, and $((c_{n,m})_{n = 1}^{N_m})_{m = 1}^\infty$ for which

    \begin{equation}
        \xi_m := \frac{1}{N_m}\sum_{n = 1}^{N_m}e_n\lambda_m^nc_{n,m}\text{ satisfies }\|\xi_m\| \ge \epsilon.
    \end{equation}
    Let $\mathcal{U} \in \beta\mathbb{N}^*$ be such that $\{N_m\ |\ m\in\mathbb{N}\} \in \mathcal{U}$, and let $\mathcal{V} \in \beta\mathbb{N}^*$ be as in Lemma \ref{PushforwardOfUltrafilters}. Let $(f_m^\prime)_{m = 1}^\infty \subseteq E^\prime$ be such that $\|f_m^\prime\| = 1$ and $f_m^\prime(\xi_m) = \|\xi_m\|$. Let $g_N^\prime = f_m^\prime$, $\lambda_N = \lambda_m$, and $\gamma_{n,N} = c_{n,m}$ for $N_{m-1} < N \le N_m$. Let $G^\prime \in A_\mathcal{U}(E)^\prime$ be given by
    
    \begin{equation}
        G^\prime(x_\mathcal{U}) = \mathcal{U}-\lim_{N\rightarrow\infty}\frac{1}{N}\sum_{n = 1}^Ng_N^\prime(x_n\lambda_N^n\gamma_{n,N}) = \mathcal{V}-\lim_{m\rightarrow\infty}\frac{1}{N_m}\sum_{n = 1}^{N_m}f_m^\prime(x_n\lambda_m^nc_{n,m}).
    \end{equation}
    For $M \in \mathbb{N}$ let $e_n(M) = \frac{e_n}{\|e_n\|}\text{min}(\|e_n\|,M)$, let $\lambda =
    \mathcal{V}-\lim_m\lambda_m^{-1}$, and observe that

    \begin{alignat*}{2}
        &\left|G^\prime(S_{\mathcal{U}}e(M)_\mathcal{U})-\lambda G^\prime(e(M)_\mathcal{U})\right| = \left|\mathcal{V}-\lim_{m\rightarrow\infty}\frac{1}{N_m}\sum_{n = 1}^{N_m}f_m^\prime((e_{n+1}(M)-\lambda e_n(M))\lambda_m^nc_{n,m})\right|\\
        =& \left|\mathcal{V}-\lim_{m\rightarrow\infty}\frac{1}{N_m}\sum_{n = 2}^{N_m+1}f_m^\prime(e_n(M)\lambda_m^{n-1}(c_{n-1,m}-\lambda\lambda_mc_{n,m}))\right|\\
        \le &\mathcal{V}-\lim_{m\rightarrow\infty}\frac{1}{N_m}\sum_{n = 2}^{N_m+1}M|c_{n-1}-\lambda\lambda_mc_n| = 0,\text{ hence}\\
        &G^\prime(S_{\mathcal{U}}e_\mathcal{U}) = \lim_{M\rightarrow\infty}G^\prime(S_{\mathcal{U}}e(M)_\mathcal{U}) = \lim_{M\rightarrow\infty}\lambda G^\prime(e(M)_\mathcal{U}) = \lambda G^\prime(e_\mathcal{U})\text{, thus}\\
        &\left|G^\prime(e_\mathcal{U})\right| = \lim_{H\rightarrow\infty}\frac{1}{H}\sum_{h = 1}^H\left|\lambda^hG^\prime(e_\mathcal{U})\right| = \lim_{H\rightarrow\infty}\frac{1}{H}\sum_{h = 1}^H\left|G^\prime\left(S_{\mathcal{U}}^he_\mathcal{U}\right)\right| = 0\text{, but}\\
        &\left|G^\prime(e_\mathcal{U})\right| = \left|\mathcal{V}-\lim_{m\rightarrow\infty}\frac{1}{N_m}\sum_{n = 1}^{N_m}f_m^\prime(e_n\lambda_m^nc_{n,m})\right| = \left|\mathcal{V}-\lim_{m\rightarrow\infty}f_m^\prime(\xi_m)\right| = \mathcal{V}-\lim_{m\rightarrow\infty}\|\xi_m\| \ge \epsilon.
    \end{alignat*}
\end{proof}

\begin{remark}
    Our next result is a uniform pointwise ergodic theorem for mildly mixing systems. We recall that the notion opposite to mild mixing is rigidity. If $(X,\mathscr{B},\mu,\varphi)$ is a measure preserving system and $\lambda \in \mathbb{S}^1$, then $f \in L^2(X,\mu)$ is \textbf{$\lambda$-rigid} if there exists a sequence $(k_w)_{w = 1}^\infty$ for which $\lim_{w\rightarrow\infty}\|T_\varphi^{k_w}f-\lambda f\|_2 = 0$. It is a classical fact that if $f \in L^2(X,\mu)$ is $\lambda$-rigid and $g \in L^2(X,\mu)$ is mildly mixing, then $\langle f,g\rangle = 0$. Our next theorem is a another manifestation of this fact that is uniform with respect to $\lambda$ and with respect to a countable collection of rigidity sequences $\{(k_{w,i})_{w = 1}^\infty\}_{i = 1}^\infty$.
\end{remark}

\begin{theorem}\label{UniformConvergenceForNMMSequences}
    Let $E$ be a Banach space, let $B = (b_n)_{n = 1}^\infty \subseteq \mathbb{N}$ be non-decreasing, let $K = (\{k_{w,i}\}_{i = 1}^{b_w})_{w = 1}^\infty \subseteq \mathbb{N}$ let $(\delta_n)_{n = 1}^\infty \subseteq \mathbb{R}^+$ satisfy $\sum_{n = 1}^\infty\delta_n < \infty$, and let $(e_n)_{n = 1}^\infty \in A(E)$ be almost mildly mixing. We have that

    \begin{alignat*}{2}
        &\lim_{N\rightarrow\infty}\sup_{\underset{\lambda \in \mathbb{S}^1}{(c_n)_{n = 1}^N \in \mathcal{R}(\lambda)}}\left|\left|\frac{1}{N}\sum_{n = 1}^Ne_nc_n\right|\right| = 0\text{, where }\\
        &\mathcal{R}(\lambda) = \mathcal{R}(\lambda,N,\delta_N,K) \\
        &:= \left\{(c_n)_{n = 1}^N \in D_1^N\ |\ \forall\ 1 \le w \le N, \exists 1 \le i \le b_w\text{ s.t. }\frac{2k_{w,i}}{N}+\frac{1}{N}\sum_{n = 1}^{N-k_{w,i}}|\lambda c_n-c_{n+k_{w,i}}| < \delta_w\right\}.
    \end{alignat*}
\end{theorem}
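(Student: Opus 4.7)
The plan is to mimic the proof of Theorem \ref{UniformConvergenceForNWMSequences}, replacing the weak mixing averaging step with the $\mathcal{IP}^*$-limit characterization of mild mixing. The new twist is that a single-shift approximate $\lambda$-rigidity of the ultraproduct functional $G'$ along each chosen $k_w$ must be bootstrapped into a $\lambda^\ell$-rigidity along an entire IP-set of shifts, and this is where the hypothesis $\sum_w \delta_w < \infty$ enters.

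Suppose the conclusion fails, so extract $\epsilon > 0$, $(N_m)_m$, $(\lambda_m)_m \subseteq \mathbb{S}^1$, and $(c_{n,m})_{n=1}^{N_m} \in \mathcal{R}(\lambda_m,N_m,\delta_{N_m},K)$ with $\xi_m := \frac{1}{N_m}\sum_n e_n c_{n,m}$ satisfying $\|\xi_m\| \ge \epsilon$. Pick $\mathcal{U} \in \beta\mathbb{N}^*$ containing $\{N_m\}$, let $\mathcal{V}$ be supplied by Lemma \ref{PushforwardOfUltrafilters}, choose $f_m' \in E'$ with $\|f_m'\|=1$ and $f_m'(\xi_m)=\|\xi_m\|$, set $\lambda := \mathcal{V}-\lim_m \lambda_m^{-1}\in\mathbb{S}^1$, and define
\[
G'(x_\mathcal{U}) := \mathcal{V}-\lim_m \frac{1}{N_m}\sum_{n=1}^{N_m} f_m'(x_n c_{n,m}),
\]
which gives a $G' \in A_\mathcal{U}(E)'$ with $\|G'\| \le 1$ and $|G'(e_\mathcal{U})| \ge \epsilon$. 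For each $w$, pigeonhole along $\mathcal{V}$ among the $b_w$ possible indices to pick $i_w$ with $\{m : i(w,m) = i_w\} \in \mathcal{V}$, and set $k_w := k_{w,i_w}$.

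A telescoping calculation in the spirit of the proof of Theorem \ref{UniformConvergenceForNWMSequences}---decompose $x_{n+k_w}c_{n,m} - \lambda_m^{-1}x_n c_{n,m}$ as $x_{n+k_w}(c_{n,m} - \lambda_m^{-1}c_{n+k_w,m}) + \lambda_m^{-1}(x_{n+k_w}c_{n+k_w,m} - x_n c_{n,m})$, with the second piece telescoping and bounded using the $2k_w/N_m$ boundary term in the definition of $\mathcal{R}$---yields for any $M$-bounded $x_\mathcal{U}$ the single-shift bound $|G'(S_\mathcal{U}^{k_w} x_\mathcal{U}) - \lambda G'(x_\mathcal{U})| \le 2M\delta_w$. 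Iterating this on $x = S_\mathcal{U}^{k_{w_{j_1}} + \cdots + k_{w_{j_{\ell-1}}}}e(M)_\mathcal{U}$ (still $M$-bounded since $S_\mathcal{U}$ is an isometry on $A_\mathcal{U}(E)$) gives by induction, for any finite $A = \{j_1<\cdots<j_\ell\} \subseteq \mathbb{N}$,
\[
\bigl|G'\bigl(S_\mathcal{U}^{\sum_i k_{w_{j_i}}}e(M)_\mathcal{U}\bigr) - \lambda^\ell G'(e(M)_\mathcal{U})\bigr| \le 2M\sum_{j=1}^\infty \delta_{w_j}.
\]

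To conclude, I would choose $M$ large enough (using $(e_n)\in A(E)$) so that $\|e_\mathcal{U} - e(M)_\mathcal{U}\|_\mathcal{U} < \epsilon/8$, and then use $\sum_n \delta_n < \infty$ to pick a strictly increasing $(w_j)_j$ with $2M\sum_j \delta_{w_j} < \epsilon/4$. Since $|\lambda^\ell|=1$ and $|G'(e_\mathcal{U})|\ge\epsilon$, passing from $e(M)_\mathcal{U}$ back to $e_\mathcal{U}$ forces $|G'(S_\mathcal{U}^k e_\mathcal{U})| \ge \epsilon/2$ for every $k \in FS((k_{w_j})_{j=1}^\infty)$. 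On the other hand, almost mild mixing of $(e_n)$ gives $\mathcal{IP}^*-\lim_k \langle S_\mathcal{U}^k e_\mathcal{U}, G'\rangle = 0$, so the set $\{k : |\langle S_\mathcal{U}^k e_\mathcal{U}, G'\rangle| < \epsilon/2\}$ is an $\mathcal{IP}^*$-set, which must intersect the IP-set $FS((k_{w_j}))$, a contradiction. The main obstacle to carrying this out is the compound estimate: one must verify that the cumulative error across arbitrarily many applications of the single-shift bound stays $O\!\bigl(M\sum_j\delta_{w_j}\bigr)$ independent of $\ell$, which requires keeping the truncation $M$ fixed throughout and using the isometry of $S_\mathcal{U}$ at each step of the induction.
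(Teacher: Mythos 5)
Your proposal is correct and follows essentially the same route as the paper's proof: the same contradiction setup, the same ultraproduct functional $G^\prime$ built from the norming functionals $f_m^\prime$, the same pigeonhole selection of $k_w$ along $\mathcal{V}$ from the finitely many $k_{w,i}$, the same single-shift approximate-eigenvector bound of order $M\delta_w$, and the same summability-driven telescoping along finite sums with the truncation level $M$ held fixed. The only cosmetic difference is the endgame: the paper packages the IP-convergence through an idempotent ultrafilter $\mathcal{W}$ containing the tail sets $\text{FS}(\{k_w\}_{w \ge W})$, whereas you apply the definition of the $\mathcal{IP}^*$-limit directly and intersect the resulting $\mathcal{IP}^*$-set with the IP-set $\text{FS}((k_{w_j})_{j})$ --- the two arguments are equivalent.
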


\begin{proof}
    We proceed by way of contradiction. Let us assume that there is some $\epsilon > 0$, and sequences $(N_m)_{m = 1}^\infty \subseteq \mathbb{N}$, $(\lambda_m)_{m = 1}^\infty \subseteq \mathbb{S}^1$, and $(c_{n,m})_{n = 1}^{N_m} \in \mathcal{R}(\lambda_m,N_m,\delta_{N_m},K)$ for which

    \begin{equation}
        \xi_m := \frac{1}{N_m}\sum_{n = 1}^{N_m}e_nc_{n,m}\text{ satisfies }\|\xi_m\| \ge \epsilon.
    \end{equation}
    Let $\mathcal{U} \in \beta\mathbb{N}^*$ be such that $\{N_m\ |\ m\in\mathbb{N}\} \in \mathcal{U}$, and let $\mathcal{V} \in \beta\mathbb{N}^*$ be as in Lemma \ref{PushforwardOfUltrafilters}. Let $(f_m^\prime)_{m = 1}^\infty \subseteq E^\prime$ be such that $\|f_m^\prime\| = 1$ and $f_m^\prime(\xi_m) = \|\xi_m\|$. Let $g_N^\prime = f_m^\prime$, $\lambda_N = \lambda_m$, and $\gamma_{n,N} = c_{n,m}$ for $N_{m-1} < N \le N_m$. Let $G^\prime \in A_\mathcal{U}(E)^\prime$ be given by
    
    \begin{equation}
        G^\prime(x_\mathcal{U}) = \mathcal{U}-\lim_{N\rightarrow\infty}\frac{1}{N}\sum_{n = 1}^Ng_N^\prime(x_n\gamma_{n,N}) = \mathcal{V}-\lim_{m\rightarrow\infty}\frac{1}{N_m}\sum_{n = 1}^{N_m}f_m^\prime(x_nc_{n,m}).
    \end{equation}
    For $m \ge 1$ and $1 \le w \le N_m$, let $k_{w}(m) \in \{k_{w,i}\}_{i = 1}^{b_w}$ be such that 
    \begin{equation}
        \frac{2k_w(m)}{N_m}+\frac{1}{N_m}\sum_{n = 1}^{N_m-k_w(m)}|\lambda_mc_n-c_{n+k_{w}(m)}| < \delta_{w}.
    \end{equation}
    For $M \in \mathbb{N}$ and $(x_n)_{n = 1}^\infty \in A(E)$ let $x_n(M) = \frac{x_n}{\|x_n\|}\text{min}(\|x_n\|,M)$, let $\lambda =
    \mathcal{V}-\lim_{m\rightarrow\infty}\lambda_m^{-1}$, let $k_w = \mathcal{V}-\lim_{m\rightarrow\infty}k_w(m)$, and observe that

    \begin{alignat*}{2}
        &\left|G^\prime(S_{\mathcal{U}}^{k_w}x(M)_\mathcal{U})-\lambda G^\prime(x(M)_\mathcal{U})\right| = \left|\mathcal{V}-\lim_{m\rightarrow\infty}\frac{1}{N_m}\sum_{n = 1}^{N_m}f_m^\prime((x_{n+k_w,M}-\lambda x_{n,M})c_{n,m})\right|\\
        \le& \mathcal{V}-\lim_{m\rightarrow\infty}\left|\frac{1}{N_m}\sum_{n = 1+k_w(m)}^{N_m}f_m^\prime(x_n(M)(c_{n-k_w(m)}-\lambda_m^{-1} c_{n,m}))+\frac{2Mk_w(m)}{N_m}\right|\\
        \le &\mathcal{V}-\lim_{m\rightarrow\infty}\frac{1}{N_m}\sum_{n = 1}^{N_m-k_w(m)}M|\lambda_mc_n-c_{n+k_w(m)}|+\frac{2Mk_w(m)}{N_m} \le M\delta_w,\text{ hence }\\
         &\sum_{w = 1}^\infty\left|G^\prime(S_{\mathcal{U}}^{k_w}x(M)_\mathcal{U})-\lambda G^\prime(x(M)_\mathcal{U})\right| < \infty.
    \end{alignat*}
    If follows that for any $t,W \in \mathbb{N}$, and any $W < w_1 < w_2 < \cdots < w_t$, and $K_t = \sum_{j = 1}^tk_{w_j}$, we have

    \begin{alignat*}{2}
        &\left|G^\prime(S_{\mathcal{U}}^{K_t}x(M)_\mathcal{U})-\lambda^tG^\prime(x(M)_\mathcal{U})\right|\\
        \le &\sum_{j = 1}^t\left|\lambda^{t-j}G^\prime(S_{\mathcal{U}}^{K_j}x(M)_\mathcal{U})-\lambda^{t-j+1}G^\prime(S_{\mathcal{U}}^{K_{j-1}}x(M)_\mathcal{U})\right|\\
        =&\sum_{j = 1}^t\left|G^\prime(S_{\mathcal{U}}^{k_{w_j}}S_{\mathcal{U}}^{K_{j-1}}x(M)_\mathcal{U})-\lambda G^\prime(S_{\mathcal{U}}^{K_{j-1}}x(M)_\mathcal{U})\right| < M\sum_{w = W}^\infty \delta_w.
    \end{alignat*}
    Let $\mathcal{W} \in \beta\mathbb{N}$ be an idempotent ultrafilter for which $\text{FS}(\{k_w\}_{w = W}^\infty) \in \mathcal{W}$ for all $W \in \mathbb{N}$ (see \cite[Lemma 5.11]{AlgebraInTheSCC}). There exists $\lambda_1 \in \mathbb{S}^1$ for which

    \begin{alignat*}{2}
        &\left|\mathcal{W}-\lim_{k\rightarrow\infty}G^\prime(S_{\mathcal{U}}^ke_\mathcal{U})\right| = \left|\lim_{M\rightarrow\infty}\mathcal{W}-\lim_{k\rightarrow\infty}G^\prime(S_{\mathcal{U}}^ke(M)_\mathcal{U})\right|\\
        =& \left|\lim_{M\rightarrow\infty}\lambda_1G^\prime(e(M)_\mathcal{U})\right| = \left|\lambda_1G^\prime(e_\mathcal{U})\right| = \left|G^\prime(e_\mathcal{U})\right| \ge \epsilon.
    \end{alignat*}
    Since every member of $\mathcal{W}$ is an IP-set (see \cite[Theorem 5.8]{AlgebraInTheSCC}), we have contradicted the fact that $(e_n)_{n = 1}^\infty$ is nearly mildly mixing.
\end{proof}

\begin{remark}\label{RemarkAboutTheDifficultiesOfStrongMixing}
    While we would also like to prove a uniform pointwise ergodic theorem for strongly mixing operators, we do not do so because it is not clear what the statement of the theorem should be. To better understand this, let us recall that for a measure preserving system $\mathcal{X} := (X,\mathscr{B},\mu,\varphi)$, the system $\mathcal{X}$ is ergodic if and only if it has no nontrivial invariant factor, it is weakly mixing if and only if it has no nontrivial Kronecker factor, and it is mildly mixing if and only if it has no nontrivial rigid factor. Parreau has shown a similar result for strongly mixing systems (cf. \cite[Theorem 11]{SpectralTheoryOfDynamicalSystemsOriginal} or \cite{AntiMixingFactor}), i.e., there exists a class of system $\mathcal{S}$ such that the system $\mathcal{X}$ is strongly mixing if and only if it does not contain a factor from the class $\mathcal{S}$. While the existence of the class $\mathcal{S}$ can be used to state an abstract pointwise ergodic theorem for strongly mixing systems, a uniform pointwise ergodic theorem would require a more concrete description of some members of $\mathcal{S}$ in order to apply our compactness arguments about a sequence of local counter examples converging to a global counter example. However, the class $\mathcal{S}$ does not currently have a simple description.
\end{remark}

\section{Appendix: Comparison of terms}
In this appendix we mention various notions of mixing sequences that have appeared in different parts of the literature. For the sake of presentation, we focus the discussion on variations of ergodic and weakly mixing sequences.

The notion of ergodic, weakly mixing and strongly mixing sequences of vectors in a Hilbert space was introduced by Bergelson and Berend \cite{BerendBergelson}. Mukhamedov \cite{WeaklyMixingSequencesInBanachSpaces} then extended the defintions of ergodic sequences and weakly mixing sequences to apply to sequences in Banach spaces. It is worth noting that weakly mixing and strongly mixing sequences can only exist in an infinite dimensional Hilbert/Banach space, while our almost mixing sequences can exist in $E = \mathbb{C}$. Later, Moreira, Richter, and Robertson \cite[Definition 3.17]{TheErdosSumsetPaper} introduced a notion of weak mixing for certain functions $f:\mathbb{N}\rightarrow\mathbb{C}$, which can also be seen as a notion of weakly mixing sequences of complex numbers, as was done by the second author in \cite{SohailsFirstPointwiseErgodicTheorem}. In \cite[Chapter 2.2]{SohailsPhDThesis} the notion of weakly mixing functions $f:\mathbb{N}\rightarrow\mathbb{C}$ was modified to obtain a new notion of weakly mixing sequences of vectors in a Hilbert space. In order to avoid confusion with the notion of weakly mixing sequences of Berend and Bergelson, this new notion was called nearly weakly mixing, and in \cite[Chapter 2.6]{SohailsPhDThesis} it is shown that every weakly mixing sequences of bounded vectors in a Hilbert space is also a nearly weakly mixing sequences, but the converse need not be true. Interestingly, the analogue of a nearly weakly mixing sequence in the case of ergodicity is called completely ergodic sequence since it is always an ergodic sequence, but the converse need not be true. Now another complication must be pointed out to the reader. The mixing sequences of \cite{BerendBergelson} are bounded sequences, the nearly (completely) mixing sequences $(x_n)_{n = 1}^\infty$ of \cite[Chapter 2]{SohailsPhDThesis} satisfy

\begin{equation}\label{CesaroL2ConditionEquation}
    \limsup_{N\rightarrow\infty}\frac{1}{N}\sum_{n = 1}^N\|x_n\|^2 < \infty,
\end{equation}
and the almost $p$-mixing (fully ergodic)
sequences $(x_n)_{n = 1}^\infty$ of \cite[Chapter 3]{SohailsPhDThesis} (which extends the work of \cite{SohailsFirstPointwiseErgodicTheorem}) satisfy 

\begin{equation}\label{CesaroL1ConditionEquation}
    \limsup_{N\rightarrow\infty}\frac{1}{N}\sum_{n = 1}^N\|x_n\| < \infty.
\end{equation}
Condition \eqref{CesaroL2ConditionEquation} is a natural condition to impose when working with Hilbert spaces, and condition \eqref{CesaroL1ConditionEquation} is a natural condition to impose when working with sequences of the form $(f(\varphi^nx))_{n = 1}^\infty$ with $f \in L^1(X,\mu;\mathcal{H})$. This necessity to work with different classes of sequences is what necessitated different definitions of mixing sequences between Chapter 2 and 3 of \cite{SohailsPhDThesis}. The mixing sequence in \cite[Chapter 3]{SohailsPhDThesis} are called almost $p$-mixing sequences, where $p$ is a filter that corresponds to the level of mixing being considered. If $(x_n)_{n = 1}^\infty$ is a bounded nearly weakly mixing sequence, then it is also almost $\mathcal{D}$-mixing, so in light of the previous discussion every weakly mixing sequence of bounded vectors is also almost $\mathcal{D}$-mixing, but the converse need not be true. If $(x_n)_{n  = 1}^\infty$ is such that

\begin{equation}
    \limsup_{N\rightarrow\infty}\frac{1}{N}\sum_{n = 1}^\infty\|x_n\|^2 \in (0,\infty)\text{ and }\lim_{N\rightarrow\infty}\frac{1}{N}\sum_{n = 1}^N\|x_n\| = 0,
\end{equation}
then $(x_n)_{n = 1}^\infty$ will be almost $\mathcal{D}$-mixing regardless of whether or not it is nearly weakly mixing (see \cite[Remark 2.3.7]{SohailsPhDThesis}). Similarly, for bounded sequences of vectors the notion of complete ergodicity is strictly stronger than the notion of full ergodicity, which is strictly stronger than the notion of ergodicity. 

It is natural to ask if the definition of almost weakly mixing sequence of vectors in a Banach space that we have given in Section \ref{UltraproductsSubsection} reduces to the definition of almost $\mathcal{D}$-mixing sequences in a Hilbert space if our Banach is also a Hilbert space $\mathcal{H}$. To this end, we present an equivalent definition of almost $\mathcal{D}$-mixing sequences that is suited to our needs.

\begin{definition}
    Suppose that $\mathcal{H}$ is a Hilbert space and $(x_n)_{n = 1}^\infty \subseteq \mathcal{H}$ satisfies Equation \eqref{CesaroL1ConditionEquation}.\footnote{It is also worth pointing out here that satisfying Equation \eqref{CesaroL1ConditionEquation} is equivalent to being an element of $ces_\infty(\mathcal{H})$. Furthermore, almost weakly mixing sequences are (by definition) elements of $A(\mathcal{H}) \subsetneq ces_\infty(\mathcal{H})$.} The sequence $(x_n)_{n = 1}^\infty$ is \textbf{almost $\mathcal{D}$-mixing} if for any bounded sequence $(y_n)_{n = 1}^\infty \subseteq \mathcal{H}$ and any ultrafilter $\mathcal{U} \in \beta\mathbb{N}^*$ we have

    \begin{equation}
        \mathcal{D}-\lim_{h\rightarrow\infty}\mathcal{U}-\lim_{N\rightarrow\infty}\frac{1}{N}\sum_{n = 1}^N\langle x_{n+h},y_n\rangle = 0.
    \end{equation}
\end{definition}

To see that any almost weakly mixing sequence is also an almost $\mathcal{D}$-mixing sequence, it suffices to observe that for any bounded sequence $\vec{y} := (y_n)_{n = 1}^\infty \subseteq \mathcal{H}$ and any ultrafilter $\mathcal{U} \in \beta\mathbb{N}^*$ there is a functional $g_{\vec{y},\mathcal{U}} \in A_{\mathcal{U}}(\mathcal{H})^\prime$ given by

\begin{equation}
    \langle i((x_n)_{n = 1}^\infty),g_{\vec{y},\mathcal{U}}\rangle_\mathcal{U} = \mathcal{U}-\lim_{N\rightarrow\infty}\frac{1}{N}\sum_{n = 1}^N\langle x_n,y_n\rangle.
\end{equation}
We currently do not know whether every almost $\mathcal{D}$-mixing element of $A(\mathcal{H})$ is also almost weakly mixing, because we do not know whether or not every element of $A_\mathcal{U}(\mathcal{H})^\prime$ has the form $g_{\vec{y},\mathcal{U}}$.\\

\noindent\textbf{Acknowledgements:} The second author acknowledges being supported by grant
2019/34/E/ST1$\allowbreak$/00082 for the project “Set theoretic methods in dynamics and number theory,” NCN (The
National Science Centre of Poland), and the first author was supported by the same grant for a 3 week research visit. The second author would like to thank Uta Freiberg for the chance to visit the TU Chemnitz with a one week research stay. Both authors would like to express our gratitude to the TU Chemnitz and the University of Adam Mickiewicz in Poznań for enabling our research stays and facilitating this collaboration. The authors would also like to thank Thomas Kalmes, Noa Bihlmaier and Pablo Lummerzheim for helpful discussions. Lastly, 
the authors would like to thank the referees for their careful reading of this article, and their comments that greatly improved the exposition.\\

\noindent\textbf{Data availability} Data sharing not applicable to this article as no datasets
were generated or analysed during the current study.\\

\noindent\textbf{Declarations}
\textbf{Conflict of interest} The authors have no relevant financial or non-financial
interests to declare.

\bibliographystyle{abbrv}
\begin{center}
	\bibliography{references}
\end{center}
\end{document}